\newcommand\reallywidetilde[1]{\ThisStyle{%
     \setbox0=\hbox{$\SavedStyle#1$}%
     \stackengine{-.1\LMpt}{\SavedStyle#1}{%
         \stretchto{\scaleto{\SavedStyle\mkern.2mu\AC}{.5150\wd0}}{.6\ht0}%
     }{O}{c}{F}{T}{S}%
}}
\newcommand{\Cay}{\mathrm{Cay}}
\newcommand{\End}{\mathrm{End}}
\newcommand{\Aut}{\mathrm{Aut}}
\providecommand\dotbigcup{\mathpalette\@barred\cdot}
\def\@barred#1#2{\ooalign{\hfil$#1\bigcup$\hfil\cr\hfil$#1#2$\hfil\cr}}
\numberwithin{equation}{section}
\newtheorem{teo}{Theorem}[section]
\newtheorem*{teo*}{Theorem}
\newtheorem*{prop*}{Proposition}
\newtheorem*{corol*}{Corollary}
\newtheorem{prop}[teo]{Proposition}
\newtheorem{corol}[teo]{Corollary}
\newtheorem{lema}[teo]{Lemma}
\newtheorem{lemma}[teo]{Lemma}
\newtheorem{defi}[teo]{Definition}
 \theoremstyle{definition}
 \newtheorem{case}{Case}
\newtheorem{remark}[teo]{Remark}
\newtheorem{example}[teo]{Example}
\begin{document}
\title{On endomorphism universality of sparse graph classes}
\author{Kolja Knauer\footnote{Departament de Matem\`atiques i Inform\`atica, Universitat de Barcelona, Spain} \footnote{LIS, Aix-Marseille Universit\'e, CNRS, and Universit\'e de Toulon, Marseille, France}\and Gil Puig i Surroca\footnote{Universit\'e Paris-Dauphine, Universit\'e PSL, CNRS, LAMSADE, 75016, Paris, France}}

\maketitle

\begin{abstract}
We show that every commutative idempotent monoid (a.k.a lattice) is the endomorphism monoid of a subcubic graph. This solves a problem of Babai and Pultr [J. Comb.~Theory, Ser.~B, 1980] and the degree bound is best-possible. On the other hand, we show that no class excluding a minor can have all commutative idempotent monoids among its endomorphism monoids. As a by-product we prove that monoids can be represented by graphs of bounded expansion (reproving a result of Ne\v{s}et\v{r}il and Ossona de Mendez) and $k$-cancellative monoids can be represented by graphs of bounded degree. Finally, we show that not all completely regular monoids can be represented by graphs excluding topological minor (strengthening a result of Babai and Pultr).
\end{abstract}

\begin{center} 
\small{\textbf{Mathematics Subject Classifications:} 05C25,20M30,05C75,05C60 
 }
 
\small{\textbf{Keywords:} graph endomorphisms, monoids, sparsity}
\end{center} 

\section{Introduction}

~\hfill All graphs, monoids, and posets are considered to be finite.

Already in 1936, K\H{o}nig raises the question, whether every group $\Gamma$ is isomorphic to the automorphism group $\Aut(G)$ of a graph $G$ \cite[Section 1.1]{Konig36}. Shortly after, Frucht gives a positive answer~\cite{Frucht1939}. Indeed, he later shows that every group is the automorphism group of a $3$-regular graph~\cite{Frucht49}. This is, the class of $3$-regular graphs is \emph{$\Aut$-universal}. Other $\Aut$-universal classes have been determined later, e.g., with given connectivity or chromatic number~\cite{S57} as well as chordal graphs~\cite{LB79} or comparability graphs of posets of bounded dimension~\cite{KZ15}. 
Several negative results into this direction come from concrete characterizations of groups appearing as automorphisms groups of a given graph class. The first such result is the characterization of automorphism groups of forests and can be traced back to Jordan, see P\'olya's paper~\cite{P37}. Similar results exist for bicyclic graphs~\cite{MA23}. Babai gave a classification of automorphism groups of planar graphs~\cite{B72I,B73II} which recently has been improved by Klav\'ik, Nedela, and Zeman~\cite{KLAVIK20221}. The same group of authors has announced characterizations of automorphism groups of outerplanar as well as graphs of treewidth $2$, see~\cite{KNZ15}. % {\color{blue}[This I was not able to find (I looked at v1, v2 at arXiv, which are the versions from 2015)]}\comment{v3! I corrected the year  (connected) treewidth 2=series-parallel}. 
In particular, none of these classes is $\Aut$-universal. More generally, Babai~\cite{B74,B74b} shows that no graph class excluding a minor is $\Aut$-universal. A big step towards the understanding of automorphism groups of classes excluding a minor has been achieved recently by Grohe, Schweitzer, and Wiebking~\cite{grohe2020automorphism}. In particular, they show that any graph class excluding a minor represents only finitely many non-abelian simple groups, which resolves a conjecture of Babai~\cite{Babai1981}.

Already in the 60s, instead of the automorphism group $\Aut(G)$ of $G$, researchers become interested in analogous questions concerning the endomorphism monoid $\End(G)$ of $G$, and the representation of categories (see \cite{PT80} for a monograph on the subject). Hedrl\'in and Pultr show that every monoid can be represented as the endomorphism monoid of a graph~\cite{HP64,HP65}. A recent strengthening of this by the authors of the present paper is that every monoid can be represented as the endomorphism monoid of a regular graph~\cite{knauer2025rigidregulargraphsproblem}. %, which solves a problem of Babai and Pultr~\cite[Problem 2.3]{BP80}. %Thus, in particular answering a question of~\cite{NOdM12}.} %Other \emph{$\End$-universal} classes have been determined later, e.g., with given connectivity or chromatic number~\cite{M72} {\color{blue}[I don't know if it's worth punctualizing, but it think the paper does it only with directed graphs (maybe I didn't dig enough. I know that there are asymmetric \v sips to solve this in the Aut setting, but I don't know about rigid \v sips to solve it in the End setting (anyway I assume there must be)). Also, right after stating the properties he deals with, there is a strange comment about connectivity, which I don't fully understand, but I assume it's not problematic]}.\comment{maybe we just remove the sentences, it is not important for our paper} 
Getting negative results by fully characterizing the endomorphism monoids of a given graph class is much harder than in the case of groups. Indeed, the structure of endomorphism monoids of paths~\cite{DFKQ20} and trees~\cite{CL14} are active areas of research. The main negative result in the area is due to Babai and Pultr~\cite{BP80} and says that no graph class excluding a topological minor is $\End$-universal. Note that this in particular includes graphs of bounded degree.

Because of the vastness of the class of monoids it is natural to distinguish important subclasses, see Figure~\ref{fig:venn} for some of the most commonly considered classes. Note that some of the regions displayed in the figure are empty.
We say that a class of graphs $\mathcal{G}$ is \emph{$\End$-universal for a class of monoids $\mathcal{M}$}, if every monoid $M\in\mathcal{M}$ is (isomorphic to) the endomorphism monoid $\End(G)$ for a graph $G\in\mathcal{G}$. 
A result of this type is by Hell and Ne\v{s}et\v{r}il~\cite{HN73}, who generalize Frucht's result~\cite{Frucht49} and show that $3$-regular graphs are $\End$-universal for the class of groups.
Another example into this direction is a paper of Koubek, R{\"o}dl and Shemmer~\cite{KRS09} studying small graphs representing monoids from a given class. Books that contain further information of representations of classes of monoids through graphs include~\cite{PT80,Kna-19,HN04}.

\begin{figure}[h]
\centering
\includegraphics[width=.5\textwidth]{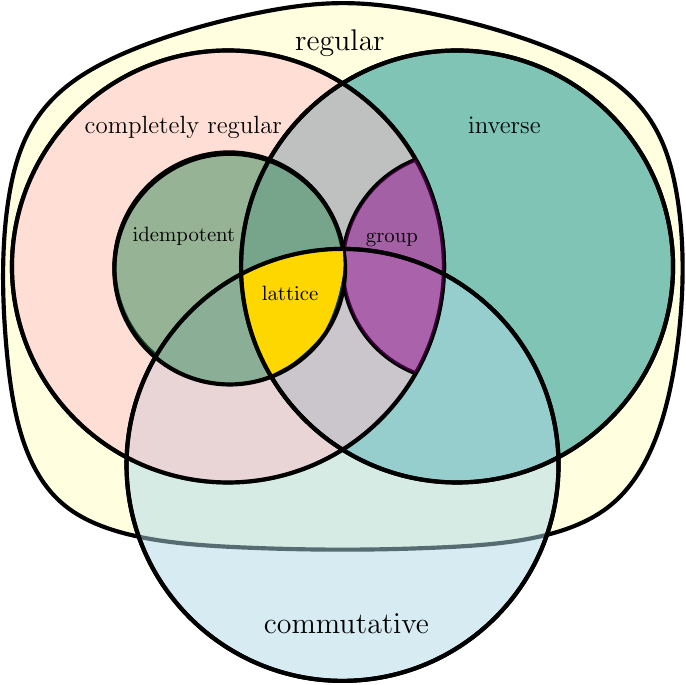}
\caption{Classes of monoids.}\label{fig:venn}
\end{figure}

In the present paper we study how sparse a class $\mathcal{G}$ can be in order to be $\End$-universal for a class of monoids $\mathcal{M}$. In order to measure sparsity we refer to a now well-known hierarchy of sparse graph classes, see the book of Ne\v{s}et\v{r}il and Ossona de Mendez~\cite{NOdM12}, that has proven to be central to Structural Graph Theory and Algorithms. Ne\v{s}et\v{r}il and Ossona de Mendez~\cite{NOdM17}  strengthened Hedrl\'in and Pultr's result by showing that there is a class $\mathcal G$ of bounded expansion which is $\End$-universal (for the class of all monoids). Indeed, most of the results we have mentioned so far with respect to $\Aut$ and $\End$ can be seen from this hierarchical point of view. We continue the study of $\End$-universality of sparse graph classes. See Figure~\ref{fig:sparseclasses} for an overview over some of the most important sparse classes and the results in this paper. 

\begin{figure}[h]
\centering
\includegraphics[width=.9\textwidth]{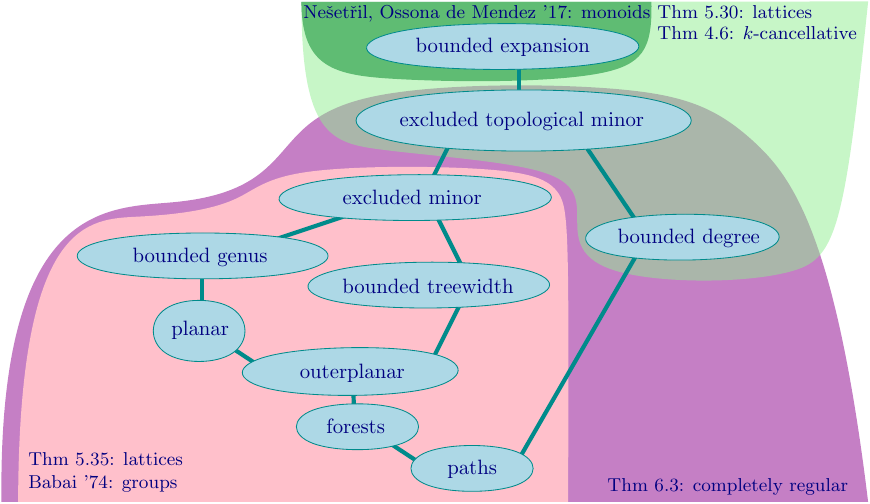}
\caption{Sparse classes of graphs. Green stands for endomorphism universality and violet for non-universality results.}\label{fig:sparseclasses}
\end{figure}

Answering a question of Babai and Pultr~\cite[Problem 1.4]{BP80} we show that graphs of maximum degree $3$ are $\End$-universal for commutative idempotent monoids (a.k.a lattices) (Theorem~\ref{teo:mainboundeddegree}), which is best-possible (Corollary~\ref{corol:bestpossible}). On the way we  complement Hell and Ne\v{s}et\v{r}il's result~\cite{HN73} by showing that every $k$-cancellative monoid is the endomorphism monoid of a simple undirected graph of maximum degree at most $\max(k+1,3)$ (Theorem~\ref{teo:groups}). 
In order to illustrate the techniques, we give a self-contained proof of the result of Ne\v{s}et\v{r}il and Ossona de Mendez~\cite{NOdM17}, presenting a class of (moreover linearly) bounded expansion that is $\End$-universal (Proposition~\ref{prop:boundedexpansion}).
On the other hand we strengthen the main result of Babai and Pultr~\cite{BP80}, by showing that no class excluding a topological minor is $\End$-universal for completely regular monoids (Theorem~\ref{teo:forcingatopologicalminor}).
Babai's result~\cite{B74,B74b} on $\Aut$-universality implies that no class excluding a minor is $\End$-universal for groups. Analogously to this, we show that no class excluding a minor is $\End$-universal for commutative idempotent monoids. Indeed, we show that already being $\End$-universal for distributive lattices does not allow excluding any minor (Theorem~\ref{teo:forcingaminor}). We finish the paper with an ample section of possible future directions. In particular, we discuss the variant of~\cite[Problem 1.4]{BP80} about non-commutative idempotent monoids. As a first step we show that bounded degree graphs are $\End$-universal for (adjoint monoids of) rectangular bands (Proposition~\ref{prop:band}).

%\section{Overview of the problem}

%\comment{some introductary remarks can be factored with the forcing a minor sections}

%The results of this section is:

% We consider the following problem.
% 
% \begin{prob}\label{problem} Let $L$ be a finite semilattice with unity. Does there exist a graph $X$ such that $\End(X)\cong L$ and the maximum degree in $X$ does not exceed a fixed finite bound, not depending on $L$?
% \end{prob}
% 
% Even though $L$ is a lattice, it is important to highlight that a choice has been made respect to the form under which it is considered as a monoid. Here we choose the meet operation systematically.
% 
% Problem~\ref{problem} is tackled with the following results.

%We prove Theorem~\ref{teo:mainboundeddegree} by the following sequence of results. For this in a colored digraph $D$ with colors $K$ for each $c\in K$ denote the maximum indegree $\Delta_c^-(D)$ and maximum outdegree $\Delta_c^+(D)$ in color $c$, respectively. Moreover denote $\Delta^{\pm}(D)=\max\{\Delta_c^-(D),\Delta_c^+(D)\mid c\in K\}$.

% 
% Let $G$ be a digraph with colored arcs and $B$ a constant such that the maximum outdegree and indegree in $G$ respect to any color are both bounded by $B$. Then there exists a directed loopless graph with colored arcs $G'$ such that $\End(G')\cong\End(G)$ and 
% \begin{enumerate}[i)]
%    \item $\Delta(G')\leq\max(B+1,3)$;
%    \item $\delta^+(G'),\delta^-(G')=1$.
% \end{enumerate}
% \end{teo}

%Structure of the paper:

\section{Preliminaries}

Let $K$ be a set of colors. A \emph{directed graph with $ K$-colored arcs} (or just arc-colored digraph) is a pair $D=(V,\{A_c\mid c\in K\})$, where $A_c\subseteq V^2$ for all $c\in K$. Thus, $D$ may be viewed as the arc-disjoint union of several digraphs---each without parallel arcs but possibly with loops---one for each color. More generally, for $K'\subseteq K$, we denote by $D[K']$ the maximal subdigraph of $D$ with $K'$-colored arcs, that is, $(V,\{A_c\mid c\in K'\})$.
%In particular, $D$ may have parallel arcs, but they must be of different color. 
We may think of a usual uncolored digraph $D=(V,A)$ as a digraph with $K$-colored arcs where $|K|=1$. Finally, an undirected graph $G=(V,E)$ without multiple edges may be thought of as a \emph{bidirected} digraph, where for each arc $(u,v)\in A$, also $(v,u)\in A$.  However, we usually denote edges of an undirected graph $G=(V,E)$ as sets $\{u,v\}\in E$. An undirected graph is \emph{simple} if it has no multiple edges and no loops. In the following we introduce some basic notions for arc-colored digraphs, which in case of uncolored or undirected graphs specialize to the standard notions unless mentioned otherwise.

A mapping $\varphi:V\to V$ is an \emph{endomorphism} of $D=(V,\{A_c\mid c\in K\})$ if $(u,v)\in A_c$ implies that $(\varphi(u),\varphi(v))\in A_c$ for all arcs of $D$. For any arc $(u,v)$ of $D$, we thus denote $(\varphi(u),\varphi(v))$ by $\varphi((u,v))$. A bijective endomorphism is an \emph{automorphism}. The set of endomorphisms of $D$ is denoted by $\End(D)$ and forms a monoid under composition. Similarly, the set of automorphisms $\Aut(D)$ of $D$ forms a group. Note that these definitions include the notions of endomorphisms and automorphisms of uncolored digraphs and undirected graphs.

An important source of arc-colored digraphs are Cayley graphs. Let $M$ be a monoid and $C\subseteq M$, then we define its arc-colored directed (right) \emph{Cayley graph} ${\Cay_{\mathrm{col}}}(M,C)$ with respect to $C$ as the digraph with $C$-colored arcs, where $V=M$ and $A_c=\{(m,mc)\mid m\in M\}$ for all $c\in C$. A well-known, see e.g.~\cite[Theorem 7.3.7]{Kna-19}, easy but important result, ties the monoids to their (generated) arc-colored directed Cayley graphs:

\begin{lemma}\label{lem:basic} Let $M$ be a monoid and let $C\subseteq M$ be a generating set of $M$. Then $M\cong\End({\Cay_{\mathrm{col}}}(M,C))$. %\comment{dont know if we use the following:} More precisely, every element $m\in M$ can be identified with left-multiplication $\mu_m\in \End({\Cay_{\mathrm{col}}}(M,C))$.
\end{lemma}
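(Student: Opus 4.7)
The plan is to exhibit an explicit isomorphism given by left multiplication. Define
\[
\Phi \colon M \longrightarrow \End({\Cay_{\mathrm{col}}}(M,C)), \qquad \Phi(m) = L_m,
\]
where $L_m \colon M \to M$ is the map $x \mapsto mx$. The first step is to check that each $L_m$ is indeed an endomorphism of the arc-colored digraph. This is immediate from associativity: if $(x,xc)$ is a $c$-colored arc, then $L_m((x,xc)) = (mx, m(xc)) = (mx, (mx)c)$, which is again a $c$-colored arc. Verifying that $\Phi$ is a monoid homomorphism is a direct application of associativity as well, together with $L_1 = \mathrm{id}_M$.

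Next, I would show that $\Phi$ is injective. This is a standard trick: evaluating at the identity element gives $L_m(1)=m$, so if $L_m = L_{m'}$ then $m = m'$.

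The main content of the lemma, and the only place where the hypothesis that $C$ generates $M$ is used, is surjectivity. Given an arbitrary $\varphi \in \End({\Cay_{\mathrm{col}}}(M,C))$, set $m = \varphi(1)$; the goal is to show $\varphi = L_m$. For each $x \in M$, since $C$ generates $M$, one can write $x = c_1 c_2 \cdots c_k$ with $c_i \in C$. The sequence of partial products yields a walk
\[
1 \;\xrightarrow{c_1}\; c_1 \;\xrightarrow{c_2}\; c_1 c_2 \;\xrightarrow{c_3}\; \cdots \;\xrightarrow{c_k}\; x
\]
in ${\Cay_{\mathrm{col}}}(M,C)$, where the $i$-th arc has color $c_i$. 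Applying $\varphi$, the image walk must use the same colored arcs, so by induction on $i$ one gets $\varphi(c_1 \cdots c_i) = m c_1 \cdots c_i$, the key point being that the unique $c_i$-colored arc leaving a vertex $y$ goes to $y c_i$. Setting $i = k$ gives $\varphi(x) = mx = L_m(x)$, as desired.

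I do not foresee any real obstacle; the only subtlety is the surjectivity argument, which crucially uses two facts about right Cayley graphs: every $c$-colored arc out of a vertex is determined (it points to right-multiplication by $c$), and every element of $M$ is reached from $1$ by a colored walk because $C$ generates $M$. Together these pin down $\varphi$ by its value at the identity.
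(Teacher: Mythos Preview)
Your proof is correct and is the standard argument. The paper does not give its own proof of this lemma but simply cites it as well known (referring to \cite[Theorem~7.3.7]{Kna-19}); the approach you wrote---left multiplication for the homomorphism, evaluation at $1$ for injectivity, and tracing the unique outgoing $c$-arc along a walk from $1$ for surjectivity---is exactly the classical proof one finds in that reference.
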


Lemma~\ref{lem:basic} lies at the core of most constructions of a graph with prescribed endomorphism monoid or automorphism group. Basically, after applying the lemma, one only has to get rid of colors and orientations. Also  our positive results follow this idea. 
We continue with definitions of degrees in arc-colored digraphs.
For a digraph $D=(V,\{A_c\mid c\in K\})$ with $K$-colored arcs, $v\in V$ and $c\in K$, we denote by $\deg^+_c(v)$ and $\deg^-_c(v)$ the outdegree and indegree of $v$ with respect to the color $c$. We denote by $\Delta^+_c(D)$ and $\Delta^-_c(D)$ the maximum of such outdegrees/indegrees for $v\in V$. The \emph{outdegree}, \emph{indegree}, and \emph{degree} of $v$ are defined as $\deg^+(v)=\sum_{c\in K} \deg^+_c(v)$, $\deg^-(v)=\sum_{c\in K} \deg^-_c(v)$, and $\deg(v)=\deg^+(v)+\deg^-(v)$. 
The maximum and minimum of $\deg^+$, $\deg^-$ and $\deg$ over all vertices of $D$ are denoted by $\Delta^+(D)$, $\delta^+(D)$, $\Delta^-(D)$, $\delta^-(D)$, $\Delta(D)$ and $\delta(D)$.
Moreover, we denote by $\Delta^{\pm}(D)$ the quantity $\max\{\Delta_c^-(D),\Delta_c^+(D)\mid c\in K\}$ . Note that for degrees in undirected graph $G$ we will just use the standard definition, which differs from the one obtained by considering $G$ as a bidirected digraph.

Finally, we need to speak about walks in arc-colored digraphs.
Let $D=(V,\{A_c\mid c\in K\})$ be a digraph with $K$-colored arcs, and $ K'\subseteq K$. A \emph{$ K'$-walk} $W$ of $D$ is an alternating sequence $v_0,a_1,v_1,...,a_k,v_k$, where $v_0,...,v_k$ are vertices of $D$ and, for every $i\in\{1,...,k\}$, there is some $c_i\in K'$ with $a_i=(v_{i-1},v_i)\in A_{c_i}$. The \emph{set of vertices of} $W$ is $V(W)=\{v_0,...,v_k\}$, its \emph{set of arcs} is $A(W)=\{a_1,...,a_k\}$, and, for every $c\in K$, its \emph{set of $c$-arcs} is $A_c(W)=A(W)\cap A_c$. The nonnegative integer $k$ is the \emph{length} of $W$. 
We shall simply write \emph{walk} instead of \emph{$K'$-walk} if the context allows it, or if $ K'= K$.
We denote the \emph{concatenation} of walks $W$ and $W'$ such that the  last vertex of $W$ coincides with the first vertex of $W'$ by $WW'$. Note that, for $\varphi\in\End(D)$ 
and a directed $K'$-walk $W=(v_0,a_1,v_1,...,a_k,v_k)$ of $D$, the image $\varphi(W)$ of $W$ is a directed $K'$-walk $(\varphi(v_0),\varphi(a_1),\varphi(v_1),...,\varphi(a_k),\varphi(v_k))$. This will be an important tool to control the endomorphism monoids of arc-colored digraphs.

%\begin{nota} Let $D$ be a digraph with $ K$-colored arcs and let $\varphi\in\End(D)$. For any arc $(u,v)$ of $D$, let $\varphi((u,v))=(\varphi(u),\varphi(v))$. Similarly, if $W$ is a directed $K'$-walk of $D$ defined by $v_0,a_1,v_1,...,a_k,v_k$, denote by $\varphi(W)$ the directed $K'$-walk $\varphi(v_0),\varphi(a_1),\varphi(v_1),...,\varphi(a_k),\varphi(v_k)$.
%\end{nota}

%{\color{gray}
%\begin{defi} We say that two closed walks of the same length $v_0,a_1,v_1,...,a_k,v_k$ and $v'_0,a'_1,v'_1,...,a'_k,v'_k$ are \emph{equivalent} {\color{blue}[not sure about this terminology]} if there is some $s\in\mathbb Z$ such that $v_i=v'_{i+s}$ for every $i\in\{0,1,...,k\}$, where the subindices are considered modulo $k$.
%\end{defi}
%}

\section{Replacing colors by rigid gadgets and bounded expansion}\label{sec:HeldrlinPultr}
This section contains the proof of the following lemma, which in particular allows the construction of $\End$-universal graph classes of bounded expansion (Proposition~\ref{prop:boundedexpansion}):

\begin{lemma}\label{lem:third_step} For every arc-colored loopless digraph $D$ with $\delta^{+}(D), \delta^{-}(D)\geq 1$, there exists a simple undirected graph $G$ such that $\End(G)\cong\End(D)$ and $\Delta(G)=\max(\Delta(D),3)$.
\end{lemma}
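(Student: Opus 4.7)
The plan is to replace each colored arc of $D$ by a rigid tree "gadget" with two distinguished terminal vertices that encode the arc's orientation (tail vs.\ head) and color, yielding an undirected simple graph $G$ whose structure any endomorphism must respect. The two issues to balance are (i) the gadgets must be rigid and mutually incompatible enough to recover the arc--colored digraph structure from $G$, and (ii) all internal gadget vertices must have degree at most $3$ so that $\Delta(G)=\max(\Delta(D),3)$.

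For the gadget construction, I would fix an enumeration $K=\{c_1,\dots,c_k\}$ and for each $c\in K$ build an asymmetric tree $T_c$ with distinguished terminals $a_c,b_c$ of degree $1$, all other vertices of degree at most $3$, satisfying: (i) the only endomorphism of $T_c$ fixing $a_c$ and $b_c$ is the identity; (ii) for any $c\neq c'$ there is no homomorphism $T_c\to T_{c'}$ sending $a_c\mapsto a_{c'}$ and $b_c\mapsto b_{c'}$; and (iii) no $T_c$ admits an orientation-reversing homomorphism into any $T_{c'}$. A concrete candidate is a long path from $a_c$ to $b_c$ whose length encodes the index of $c$, decorated by one pendant edge placed near the $a_c$-end at a color-dependent position that simultaneously breaks the $a\leftrightarrow b$ symmetry and distinguishes colors. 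The graph $G$ is then formed by taking, for each arc $(u,v)\in A_c$, a fresh copy of $T_c$ and identifying $a_c$ with $u$ and $b_c$ with $v$.

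The degree count is then immediate: every $v\in V(D)$ gains exactly one edge per incident arc, so $\deg_G(v)=\deg_D(v)\leq \Delta(D)$; every interior gadget vertex has degree at most $3$ by construction; and the pendant attachment forces some gadget vertex to have degree exactly $3$, so $\Delta(G)=\max(\Delta(D),3)$. The forward inclusion $\End(D)\hookrightarrow\End(G)$ is a straightforward lift: any $\psi\in\End(D)$ preserves arc colors by definition of endomorphism of an arc-colored digraph, so it extends to $\varphi\in\End(G)$ by acting as $\psi$ on $V(D)$ and, on each gadget copy over an arc $(u,v)\in A_c$, by the unique terminal-preserving isomorphism to the gadget copy over $(\psi(u),\psi(v))\in A_c$; functoriality of this lift makes it a monoid embedding.

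The main obstacle is the opposite direction: showing that every $\varphi\in\End(G)$ preserves $V(D)\subseteq V(G)$ and sends each copy of $T_c$ onto a copy of $T_c$ with matching terminals, so that $\varphi$ descends to an endomorphism of $D$ whose lift reproduces $\varphi$. The hypothesis $\delta^{+}(D),\delta^{-}(D)\geq 1$ is used precisely here: every $V(D)$-vertex lies at the junction of at least two gadgets (at least one in-gadget and one out-gadget), whereas every internal gadget vertex lies inside a single tree. This lets one characterize $V(D)$ inside $G$ in an endomorphism-invariant way --- for instance via a walk-based invariant, or by identifying the vertices at which multiple rigid branches meet --- and conclude $\varphi(V(D))\subseteq V(D)$. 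Once this is in place, gadget rigidity (i) together with the inter-color incompatibility (ii)--(iii) forces the image of each gadget to be a gadget of the same color with preserved orientation, so the restriction of $\varphi$ to $V(D)$ is an endomorphism of $D$. Naturality of the bijection then upgrades $\End(G)\leftrightarrow\End(D)$ to a monoid isomorphism.
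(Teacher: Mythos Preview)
Your overall plan (replace colored arcs by rigid gadgets and lift/descend endomorphisms) is exactly the paper's \v{s}ip-product, but the concrete gadget you propose fails and the step you hand-wave is precisely where the work lies.

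The specific tree gadget---a path with one pendant leaf---does not satisfy your own property (i): if the leaf $w$ is attached at $v_p$ on the main path, then the map fixing every path vertex and sending $w\mapsto v_{p-1}$ is a terminal-fixing endomorphism of $T_c$ that is not the identity. Property (ii) also fails: a longer path of the same parity always admits a terminal-preserving homomorphism onto a shorter one (by folding), so ``length encodes color'' cannot make two gadgets mutually incompatible in both directions. More fundamentally, with acyclic gadgets you have supplied no invariant separating $V(D)$ from gadget interiors. A $V(D)$-vertex of total degree $2$ (one in-arc, one out-arc) and an interior gadget vertex of degree $2$ are locally indistinguishable, and since you identify terminals with $V(D)$-vertices, the two adjacent gadget paths even concatenate through it into one longer path. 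Nothing you wrote prevents an endomorphism of $G$ from sliding such a vertex into a gadget interior, or from folding a gadget tree onto a proper subtree. The phrase ``walk-based invariant, or \dots vertices at which multiple rigid branches meet'' is exactly where the argument has to happen, and it is missing.

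The paper resolves both issues at once by taking as gadget the rigid Hedrl\'in--Pultr graph $H^k$ of girth $g=2k+5$ (not a tree). Each $c$-arc is replaced by a fresh copy of $H^k$, and its tail and head are joined by one \emph{new} edge each to designated degree-$2$ vertices $c^+,c^-$ of that copy, chosen on different shortest cycles and at distance at least $g/4$ from the degree-$3$ vertices; the positions of $c^+,c^-$ encode the color. The endomorphism-invariant is then cycle length: since $D$ is loopless, no $V(D)$-vertex lies on a cycle of length $\le g$ in $G$, while every gadget vertex does, so each gadget copy must map into a single gadget copy, and rigidity of $H^k$ forces that restriction to be the unique isomorphism. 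Finally $\delta^{\pm}(D)\ge 1$ gives $\varphi(V(D))\subseteq V(D)$: if some $x\in V(D)$ landed inside a gadget $H^k_{c',a'}$, then the images of the attachment point $c^+$ of an out-gadget at $x$ and of $c^-$ of an in-gadget at $x$ would be two vertices of $H^k_{c',a'}$ at distance $\le 2$, contradicting the placement of $c^+$ and $c^-$. This cycle-based mechanism is what your tree gadgets cannot provide.
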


In order to construct a digraph $D$ with given endomorphism monoid that satisfies the preconditions of Lemma~\ref{lem:third_step}, in the present section we will use Lemma~\ref{lem:basic2}, which is a slight strengthening of Lemma~\ref{lem:basic}. In the next section, however, since we have to further control the degree, we will use a blow-up construction, see Lemma~\ref{lem:second_step}.

% Throughout this section $G=(V,\{A_c\mid c\in K\})$ is a given directed loopless graph with $K$-colored arcs and such that $\delta^+(G),\delta^-(G)\geq 1$. The aim is to construct an undirected graph $G'=(V',E')$ satisfying Theorem~\ref{teo:third_step}. This type of construction has been practiced many times before {\color{blue}[cite: Hedrl�n \& Pultr, Hell?, Ne{\v s}et{\v r}il?, more?]}. We simply highlight that it can be done without (essentially) altering the maximum degree.
% 
% \subsection{Construction of $G'$}
% 
% {\color{blue}[I would prefer another letter, since $a$ is for arcs (see later). $u,v$ seem a general vertex. $z$ may also seem a general vertex. Maybe $p$?]}.

For any $k>0$ the \emph{Hedrl\'in--Pultr graph} $H^k$ arises from the complete graph $K_4$ on vertices $b_0,b_1,b_2,b_3$ by subdividing once the edge $\{b_0,b_2\}$, twice the edge $\{b_0,b_3\}$,  $2k+1$ times the edge $\{b_1,b_2\}$, $2k-1$ times the edge $\{b_2,b_3\}$, and $2k$ times the edge $\{b_3,b_1\}$. See Figure~\ref{fig:Hg} for an illustration, where further we denote the bounded facial cycles of the plane drawing $Z_1, Z_2, Z_3$. The graph $H^1$ has been introduced by Hedrl\'in and Pultr~\cite{HP65}, the full family appears for instance in~\cite{Adams1981}, also see~\cite{N09}.

\begin{figure}[h]
\centering
\includegraphics[scale=1]{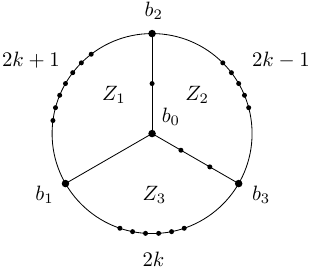}
\caption{The Hedrl\'in--Pultr graph $H^k$.}\label{fig:Hg}
\end{figure}

Note that the \emph{girth} $g(H^k)$, i.e.~the length of a shortest cycle of $H^k$, is $2k+5$, and $Z_1, Z_2, Z_3$ are the shortest cycles containing $b_1$ and $b_2$, $b_2$ and $b_3$, and $b_3$ and $b_1$, respectively. The importance of $H^k$ stems from the following property:

\begin{lemma}\label{lem:rigid} \emph{\cite{Adams1981}} For every $k>0$ the graph $H^k$ is \emph{rigid}, i.e., has no non-identity endomorphism.
\end{lemma}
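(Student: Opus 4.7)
The plan is to leverage the girth structure of $H^k$. First, I would verify by direct enumeration of cycles in the underlying topological $K_4$ on $\{b_0,b_1,b_2,b_3\}$ that the girth of $H^k$ is $g=2k+5$ and that $Z_1,Z_2,Z_3$ are the only cycles of length $g$. Moreover $Z_1 \cup Z_2 \cup Z_3 = H^k$, so rigidity will follow once any endomorphism $\varphi$ is pinned down on each of these three cycles.

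The pivotal observation is that $g$ is odd. Every closed walk of length $g$ in $H^k$ contains an odd cycle of length at most $g$, which by the girth bound has length exactly $g$ and, sharing its length with the walk, coincides with the walk. Applied to $\varphi(Z_i)$, this yields $\varphi(Z_i)=Z_{\sigma(i)}$ for some self-map $\sigma\colon\{1,2,3\}\to\{1,2,3\}$, and each restriction $\varphi|_{Z_i}$ is a graph isomorphism onto $Z_{\sigma(i)}$.

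Let $u,t$ denote the neighbors of $b_0$ internal to $P_{02}$ and $P_{03}$ respectively, so $N(b_0)=\{b_1,u,t\}$. To show $\varphi(b_0)=b_0$: the three vertices $b_1,u,t$ lie in the pairs $Z_1\cap Z_3$, $Z_1\cap Z_2$, $Z_2\cap Z_3$, so any two of them share a common cycle on which $\varphi$ is injective; hence the three image edges at $\varphi(b_0)$ are distinct, forcing $\varphi(b_0)$ to have degree $3$ in $H^k$, i.e., $\varphi(b_0)\in\{b_0,b_1,b_2,b_3\}$. When $\sigma$ is a bijection, $\varphi(b_0)\in Z_1\cap Z_2\cap Z_3=\{b_0\}$ at once. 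The main technical obstacle is the non-bijective case: tracking the consistency of $\varphi|_{Z_i}$, $\varphi|_{Z_j}$, $\varphi|_{Z_l}$ on their pairwise overlap paths $P_{ab}=Z_i\cap Z_j$ (of respective lengths $1,2,3$), together with injectivity of each restriction, should force $\sigma$ to be a bijection.

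With $b_0$ fixed and $\sigma$ bijective, $\varphi$ permutes $\{b_1,u,t\}$, and matching each neighbor to its pair of containing cycles determines $\sigma$ from this permutation. A nontrivial permutation would send $b_1$ (degree $3$ in $H^k$) to $u$ or $t$ (degree $2$), forcing the three neighbors of $b_1$ to collapse into at most two images; a short tracing of this collapse through $\varphi|_{Z_1}$ and $\varphi|_{Z_3}$ then produces an injectivity violation on one of the restrictions. Hence $\varphi$ fixes $\{b_1,u,t\}$ pointwise, and then each $\varphi|_{Z_i}$ is an automorphism of the odd cycle $Z_i$ fixing three distinct vertices (namely $b_0$ together with the two of $\{b_1,u,t\}$ that lie on $Z_i$), hence the identity. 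Since $Z_1\cup Z_2\cup Z_3=H^k$, we conclude $\varphi=\mathrm{id}$.
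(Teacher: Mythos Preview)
The paper does not supply its own proof of this lemma; it is quoted from \cite{Adams1981}. So your proposal is being compared to a citation, not to an argument in the paper, and I can only assess it on its own merits.

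Your overall strategy is sound: the girth is $g=2k+5$, it is odd, $Z_1,Z_2,Z_3$ are the only $g$-cycles, and any endomorphism maps each $Z_i$ isomorphically onto some $Z_{\sigma(i)}$. The observation that the three edges at $b_0$ pairwise share some $Z_i$, forcing $\deg(\varphi(b_0))\ge 3$, is correct and useful. In fact the non-bijective case you flag as the ``main technical obstacle'' falls quickly by the same idea: if, say, $\sigma(1)=\sigma(2)=a$, then all three image edges at $\varphi(b_0)$ lie on the cycle $Z_a$ (the edge $\{b_0,b_1\}$ via $Z_1$, the edge $\{b_0,u\}$ via $Z_1$ or $Z_2$, the edge $\{b_0,t\}$ via $Z_2$), and a vertex of a cycle has only two incident edges on that cycle --- contradiction. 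So $\sigma$ is a bijection and $\varphi(b_0)=b_0$.

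There is, however, a genuine error in your endgame. You assert that ``a nontrivial permutation would send $b_1$ (degree $3$) to $u$ or $t$ (degree $2$)''. This fails for $\sigma=(1\,3)$: then $\{\sigma(1),\sigma(3)\}=\{1,3\}$, so $\varphi(b_1)$ is the neighbour of $b_0$ in $Z_1\cap Z_3$, which is $b_1$ itself. In this case $\varphi$ fixes $b_0$ and $b_1$ and swaps $u\leftrightarrow t$; your degree argument does not apply. One still obtains a contradiction, but by tracing further: $\varphi|_{Z_3}$ is then the reflection of $Z_3$ at $b_0$, sending $b_3$ to a vertex on the interior of $P_{13}$, whereas $b_3\in Z_2$ forces $\varphi(b_3)\in Z_{\sigma(2)}=Z_2$, and no interior vertex of $P_{13}$ lies on $Z_2$. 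Similar tracing (to $b_2$ or $b_3$, not merely the neighbours of $b_1$) is what actually rules out the other nontrivial bijections as well; your ``injectivity violation on one of the restrictions'' is not quite the mechanism that does the work. Once $\sigma=\mathrm{id}$ is established, your final step is fine: each of $b_1,u,t$ is the unique neighbour of $b_0$ in its pair $Z_i\cap Z_j$ and is therefore fixed, so each $\varphi|_{Z_i}$ is an automorphism of an odd cycle fixing at least two vertices, hence the identity.
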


Let $H^+$ be the set of vertices of $Z_1$ that are at distance at least $\frac{g(H^k)}{4}$ from $b_1$ and from $b_2$. Observe that $H^+$ forms a subpath of $Z_1$, and that $|H^+|\geq\frac{g(H^k)}{2}-4$. Similarly, let $H^-$ be the set of vertices of $Z_2$ at distance at least $\frac{g(H^k)}{4}$ from $b_2$ and $b_3$. If $g(H^k)\geq 9$ then $H^-$ also forms a subpath of $Z_2$, and $|H^-|\geq\frac{g(H^k)}{2}-6$. 

We will now use copies of $H^k$ as gadgets to replace colored arcs while preserving the endomorphism monoid. This kind of construction has been called the arrow-construction in~\cite[Section 2.3.2]{N99} and is also known as the \emph{\v{s}\'ip-product~\cite{M72}}. Variants have been used in~\cite{Frucht1939,HP64,HP65,HN73,HN05,VPH65}.

We will stick close to the standard notation.
So, given a set of colors $K$%{\color{blue}[tbr]}\comment{what do you mean?}{\color{blue}[maybe it's a bit sudden, despite the previous sentence. What do you think about ``...given a set of colors $K$..." (I'm not entirely convinced by this either)]}
, we choose $k$ large enough so that in $H^k$ the sets $H^+,H^-$ are of size at least $|K|$. For each $c\in K$, we pick vertices $c^+\in H^+$ and $c^-\in H^-$, such that each such vertex is picked for only one $c\in K$. We denote $H^k$ together with the choice of $c^+$ and $c^-$ by $H^k_c$. %{\color{blue}[Not sure of the need of the last sentence. It is only useful for the penultimate sentence of next paragraph. This one could be modified to something like ``...by a copy of $H^k$ and the vertices $x,y$ of $D$ are connected by an edge to vertices $c^+,c^-$ of that copy, respectively"]} %Denote by $(H^k,c^+,c^-)$ the resulting triple by $H^K$ the set of these triples.
%{\color{blue}[Not sure about the need of $H^k_c$ just for the explanation in next paragraph. If not used, then maybe it should be made clear that $c^+,c^-$ in the penultimate sentence are the corresponding vertices in the copy of $H^k$ correspondent to the arc.]}

Let $D=(V,\{A_c\mid c\in K\})$ be a $K$-colored digraph, and $k$ large enough so that $H^k_c$ can be defined as above. We denote by $\check{D}_k$ the ($k$-th) \v{s}\'ip-product of $D$ and the Hedrl\'in--Pultr graph $H^k$. It is the simple undirected graph obtained from $D$ by replacing each arc $(x,y)\in A_c$ by a copy of $H^k_c$ and the vertices $x,y$ of $D$ are connected by an edge to $c^+,c^-$, respectively. See Figure~\ref{fig:new_gadget} for an illustration. 

\begin{figure}[h]
\centering
\includegraphics[scale=1]{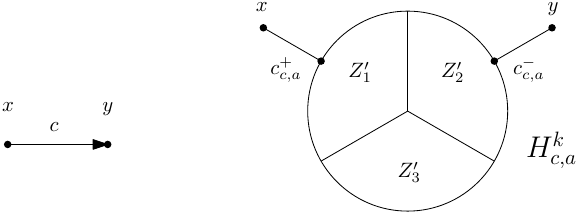}
\caption{A single-colored $c$-arc $a=(x,y)$ in $D$ and its replacement in $\check{D}_k$. The shortest cycles of $H^k_{c,a}$ are labelled $Z'_1,Z'_2,Z'_3$ in correspondence with those of $H^k$.}\label{fig:new_gadget}
\end{figure}

More formally, for each $c\in K$ and each $a\in A_c$ let $H^k_{c,a}=(V_{c,a},E_{c,a})$ be a copy of $H^k_c$ with designated vertices $c^+_{c,a},c^-_{c,a}$. If $X$ is such a copy and $x$  a vertex of $X$, we denote by $x_{c,a}$ the image of $x$ by the (by Lemma~\ref{lem:rigid}) unique homomorphism $X\rightarrow H^k_{c,a}$. Now, the ($k$-th) \v{s}\'ip-product of $D$ and $H^k$, or simply the \v{s}\'ip-product of $D$, is the undirected graph $\check{D}_k=(V',E')$ with:
\[V'=V\cup\bigcup_{\substack{c\in K \\ a\in A_c}} V_{c,a}, \]
\[E'=\{\{x,c^+_{c,a}\},\{y,c^-_{c,a}\}\mid c\in K,\ a\in A_c,\ a=(x,y)\}\cup\bigcup_{\substack{c\in K \\ a\in A_c}} E_{c,a},\]
where these unions are assumed to be disjoint.  
% 
% In other words, $G'$ is obtained from $G$ by substituting each arc $a=(x,y)$ for the gadgets $H^k_{c,a}$ with $a\in A_c$, which should be linked to $x,y$ via the additional edges $\{x,c^+_{c,a}\},\{y,c^-_{c,a}\}$.
By construction we immediately get:

\begin{remark}\label{rem:degree} If $D$ is an arc-colored digraph with \v{s}\'ip-product $\check{D}_k$ then $\Delta(\check{D}_k)$ is at most $\max(\Delta(D),3)$. %{\color{blue}[Isn't that an equality? At least it has to match Lemma~\ref{lem:third_step}]} No, it isn't, because we don't have the hypothesis on the minimum degree, so this could be just isolated vertices.
%{\color{blue}[Maybe we should say that it's an equality if $D$ has at least one arc]}\comment{I dont think so} {\color{blue}  [Then we should change the statement of Lemma~\ref{lem:third_step}, or explain why they don't match. Changing Lemma~\ref{lem:third_step} affects the paragraph after Prop \ref{prop:iso}, the first paragraph of Section~\ref{sec:blowup}, and Theorem \ref{teo:groups}, but not Theorem~\ref{teo:mainboundeddegree}]}
\end{remark}

%\subsection{Description of the endomorphism monoid of $G'$}

Given $D=(V,\{A_c\mid c\in K\})$ with \v{s}\'ip-product $\check{D}_k=(V',E')$, for every $\varphi\in\End(D)$ we define $\varphi':V'\rightarrow V'$ as 
\[\varphi'(x)=\begin{cases}\varphi(x) &\text{if }x\in V \\ x_{c,\varphi(a)} &\text{if $x\in V_{c,a}$ for some $c\in K$ and $a\in A_c$.}\end{cases}\]

\begin{lemma}\label{lem:Phi3_well_defined} If $D$ is an arc-colored digraph with \v{s}\'ip-product $\check{D}_k$, and $\varphi\in\End(D)$, then $\varphi'\in\End(\check{D}_k)$.
\end{lemma}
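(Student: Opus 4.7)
The plan is to verify directly that $\varphi'$ preserves every edge of $\check{D}_k$, splitting into the two types of edges present in $E'$. Well-definedness of $\varphi'$ itself is immediate: the vertex set of $\check{D}_k$ is the disjoint union of $V$ and the $V_{c,a}$'s, so the two branches of the defining case distinction never overlap. The crucial observation driving the whole verification is that, because $\varphi\in\End(D)$, an arc $a=(x,y)\in A_c$ is sent to an arc $\varphi(a)=(\varphi(x),\varphi(y))\in A_c$ of the \emph{same} color $c$, so a gadget $H^k_{c,a}$ is sent to a gadget $H^k_{c,\varphi(a)}$ of the same isomorphism type; moreover by Lemma~\ref{lem:rigid} the natural identification $u\mapsto u_{c,\varphi(a)}$ from $H^k_{c,a}$ to $H^k_{c,\varphi(a)}$ is the \emph{unique} homomorphism between these two copies of $H^k_c$, and in particular it is a graph isomorphism that carries $c^+_{c,a}$ to $c^+_{c,\varphi(a)}$ and $c^-_{c,a}$ to $c^-_{c,\varphi(a)}$.

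First I would handle the internal edges. Let $\{u,v\}\in E_{c,a}$ be an edge of a gadget. Both endpoints lie in $V_{c,a}$, so $\varphi'(u)=u_{c,\varphi(a)}$ and $\varphi'(v)=v_{c,\varphi(a)}$. By the isomorphism above, $\{u_{c,\varphi(a)},v_{c,\varphi(a)}\}\in E_{c,\varphi(a)}\subseteq E'$, as required.

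Next I would handle the connecting edges. For $a=(x,y)\in A_c$ the edges of this type are $\{x,c^+_{c,a}\}$ and $\{y,c^-_{c,a}\}$. Since $x\in V$ we have $\varphi'(x)=\varphi(x)$, and since $c^+_{c,a}\in V_{c,a}$ we have $\varphi'(c^+_{c,a})=(c^+)_{c,\varphi(a)}=c^+_{c,\varphi(a)}$. As $\varphi(a)\in A_c$ has first coordinate $\varphi(x)$, the pair $\{\varphi(x),c^+_{c,\varphi(a)}\}$ belongs to $E'$ by the very definition of the \v{s}ip-product. The argument for $\{y,c^-_{c,a}\}$ is identical, using the second coordinate of $\varphi(a)$.

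These two case checks cover all edges of $\check{D}_k$, so $\varphi'\in\End(\check{D}_k)$. The potential obstacle — the only place where anything could go wrong — is precisely the point where one needs to know that $\varphi$ does not change the color of an arc; this is what guarantees that the rigid gadget $H^k_{c,a}$ is mapped to another rigid gadget of the \emph{same} shape with matching distinguished ports $c^+,c^-$, and thereby that the gadget-internal edges and the connecting edges are both preserved. Everything else is bookkeeping against the explicit formulas for $V'$ and $E'$.
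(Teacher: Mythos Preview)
Your proof is correct and follows essentially the same approach as the paper's: a two-case split over the edges of $\check{D}_k$, handling internal gadget edges and connecting edges separately (the paper does them in the opposite order). Your write-up is in fact slightly more explicit in spelling out why $\varphi'(c^+_{c,a})=c^+_{c,\varphi(a)}$ and why color-preservation of $\varphi$ is exactly what makes the argument go through.
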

\begin{proof} Let $D=(V,\{A_c\mid c\in K\})$, $\check{D}_k=(V',E')$ and $\{x,y\}\in E'$. We distinguish two cases.
\setcounter{case}{0}
\begin{case}
 One of $x$ and $y$ is in $V$ and the other in $V_{c,a}$ for some $c\in K$ and $a\in A_c$. 
\end{case}
Without loss of generality assume that $x\in V$ and $y=c^+_{c,a}$ for some $c\in K$, $a\in A_c$ and $x'\in V$ with $a=(x,x')$. We have that $\varphi'(y)=y_{c,\varphi(a)}=c^+_{c,\varphi(a)}$, and this already implies that $\{\varphi'(x),\varphi'(y)\}\in E'$.
If alternatively $y=c^-_{c,a}$ and  $a=(x',x)$, the argument is analogous.

\begin{case}
Both $x$ and $y$ are in $V_{c,a}$ for some $c\in K$ and $a\in A_c$.
\end{case}
We have $\{\varphi'(x),\varphi'(y)\}=\{x_{c,\varphi(a)},y_{c,\varphi(a)}\}\in E'$ by definition.
\end{proof}

\begin{lemma}\label{lem:Phi3_surjective} Let $D$ be an arc-colored loopless digraph with $\delta^{+}(D), \delta^{-}(D)\geq 1$ and $\check{D}_k$ its \v{s}\'ip-product. If $\varphi\in\End(\check{D}_k)$, then there is $\psi\in\End(D)$ such that $\varphi=\psi'$.
\end{lemma}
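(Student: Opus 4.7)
The strategy is to show that $\varphi$ respects the decomposition of $\check D_k$ into $V$ and the gadgets $H^k_{c,a}$: $\varphi(V)\subseteq V$, and the restriction to each gadget is the canonical isomorphism onto another gadget of the same colour. Granting this, setting $\psi:=\varphi|_V$ yields an endomorphism of $D$ with $\varphi=\psi'$. I begin with a girth computation. A cycle of $\check D_k$ through a $V$-vertex must enter and leave each gadget it visits via the designated pair $c^+_{c,a},c^-_{c,a}$, which by the definition of $H^+,H^-$ lie at distance at least $g(H^k)/2$ apart inside the gadget. Since $D$ is loopless, such a cycle traverses at least two distinct gadgets, hence has length at least $g(H^k)+4$, whereas the cycles $Z_1,Z_2,Z_3$ inside each gadget have length $g:=g(H^k)=2k+5$. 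Thus the girth of $\check D_k$ is $g$ and its $g$-cycles are exactly the copies $Z'_1,Z'_2,Z'_3$ inside the gadgets.

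Next I show that $\varphi$ sends each gadget canonically onto another gadget. As $\check D_k$ is loopless, $\varphi$ never collapses an edge, so for any $g$-cycle $C$ the image $\varphi(C)$ is a closed walk of length $g$. Removing immediate backtracks $u,v,u$ strictly decreases the walk length by $2$, and any fully reduced non-trivial closed walk has length at least $g$ by the girth. Since $g$ is odd, reducing down to length $0$ is impossible, so $\varphi(C)$ must already be reduced, hence a simple $g$-cycle contained in a single gadget. Applied to $Z_1,Z_2,Z_3$ of $H^k_{c,a}$, whose common vertex $b_0$ forces the three image $g$-cycles to live in a single target gadget $G'$, and using the coverage $V(H^k)=V(Z_1)\cup V(Z_2)\cup V(Z_3)$, the whole source gadget maps into $G'$. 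The restriction is then an endomorphism of $H^k$, which by Lemma~\ref{lem:rigid} is the identity; so it is the unique isomorphism $H^k_{c,a}\to G'$.

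Write $G'=H^k_{c',a'}$ with $a'=(x',y')$ and let $a=(x,y)$. The canonical isomorphism sends the abstract vertex $c^+$ of $H^k_{c,a}$ to the same abstract vertex of $H^k_{c',a'}$; if $c\neq c'$, this image is not a designated vertex of $G'$, so has no $V$-neighbour, and the edge $\{x,c^+_{c,a}\}$ forces $\varphi(x)$ to be one of the two subdivision neighbours of $c^+$ on the subdivided edge $\{b_1,b_2\}$ of $G'$. Using $\delta^+(D),\delta^-(D)\geq 1$ and looplessness, $x$ is incident to a second arc $a''$; the analogous argument confines $\varphi(x)$ to the two neighbours of the abstract $c''^-$ on a subdivided edge $\{b_2,b_3\}$ of some gadget. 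These two constraint sets are disjoint---distinct gadgets are vertex-disjoint, and within a common gadget the edges $\{b_1,b_2\}$ and $\{b_2,b_3\}$ share only $b_2$, which lies far from both $c^+$ and $c''^-$---a contradiction. Hence $c=c'$; then $\varphi(c^+_{c,a})=c^+_{c,a'}$ is designated with unique $V$-neighbour $x'$, and the same double-arc argument now forces $\varphi(x)=x'\in V$, and analogously $\varphi(y)=y'\in V$.

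Setting $\psi:=\varphi|_V$, the previous step gives $(\psi(x),\psi(y))=a'\in A_c$ for every $a=(x,y)\in A_c$, so $\psi\in\End(D)$. Since $\varphi$ agrees with $\psi'$ on $V$ by construction and on each $V_{c,a}$ via the canonical isomorphism to $V_{c,\psi(a)}$, we conclude $\varphi=\psi'$. The main obstacle is the third step: ruling out colour-misalignment and images of $V$-vertices landing inside gadgets. The argument uses crucially the assumption $\delta^+(D),\delta^-(D)\geq 1$ together with looplessness, guaranteeing at least two distinct arcs incident to every $V$-vertex, and the fact that designated vertices of $H^k$ lie on pairwise distinct subdivided edges of the underlying $K_4$, so that their neighbourhoods inside the gadget are disjoint.
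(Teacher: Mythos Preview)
Your proof is correct and follows essentially the same strategy as the paper's: show that each gadget maps canonically onto a gadget via rigidity of $H^k$, then use a second arc at each $V$-vertex (guaranteed by $\delta^\pm(D)\geq 1$ and looplessness) to force $\varphi(V)\subseteq V$ and colour-preservation, after which $\psi:=\varphi|_V$ does the job. The only differences are organizational---you spell out the odd-girth parity argument for why $g$-cycles map to $g$-cycles more carefully than the paper does, and you establish $c=c'$ before $\varphi(x)\in V$ whereas the paper does the reverse (directly supposing $\varphi(x)\in V_{c',a'}$ and deriving that the abstract $c^+$ and $\tilde c^-$ vertices would sit at distance $\le 2$, contradicting the separation of $H^+$ and $H^-$).
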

\begin{proof} Let $D=(V,\{A_c\mid c\in K\})$ and  $\check{D}_k=(V',E')$. Let $c\in K$ and $a\in A_c$. Since $D$ has no loops, by construction none of the vertices in $V$ is in a cycle of $\check{D}_k$ of length at most $g(H^k)$. This implies that $\varphi(V_{c,a})\cap V=\varnothing$. Since $H^k_{c,a}$ is connected, there exist $c'\in K$ and $a'\in A_{c'}$ such that $\varphi(V_{c,a})\subseteq V_{c',a'}$. Therefore the restriction of $\varphi$ to $V_{c,a}$ is the unique homomorphism $H^k_{c,a}\rightarrow H^k_{c',a'}$. 

Let $a=(x,y)$ and suppose for contradiction that $\varphi(x)\in V_{c',a'}$. Since $\delta^-(D)\geq 1$ we can take some $\tilde c\in K$ and $\tilde a\in A_{\tilde c}$ of the form $\tilde a=(x',x)$. The restriction of $\varphi$ to $V_{\tilde c,\tilde a}$ has to be the unique homomorphism $H_{\tilde c,\tilde a}\rightarrow H_{c',a'}$. In particular, $\varphi(c^+_{c,a})=c^+_{c',a'}$ and $\varphi(c^-_{\tilde c,\tilde a})=c^-_{c',a'}$ are two vertices of $H_{c',a'}$ at distance $2$ or less, a contradiction to the choice of $H^+$ and $H^-$. Therefore $\varphi(x)\in V$. Since $\delta^+(D)\geq 1$ any vertex of $V$ can take the role of $x$. Henceforth, $\varphi(V)\subseteq V$.

Since $\varphi(x)\in V$ is a neighbour of $\varphi(c^+_{c,a})=c^+_{c',a'}$, $c'=c$ and $\varphi(x)$ is the origin of $a'$. Similarly, since $\varphi(y)\in V$ is a neighbour of $\varphi(c^-_{c,a})=c^-_{c,a'}$, $\varphi(y)$ is the endpoint of $a'$.

Now let $\psi$ be the restriction of $\varphi$ to $V$. It is clear that $\psi\in\End(D)$, because $a'\in A_{c}$. It is also clear that $\psi'=\varphi$.
\end{proof}

\begin{prop}\label{prop:iso} Let $D$ be an arc-colored loopless digraph with $\delta^{+}(D), \delta^{-}(D)\geq 1$ and $\check{D}_k$ its \v{s}\'ip-product. The mapping $\Phi$ defined by $\Phi(\varphi)=\varphi'$ is a monoid isomorphism from $\End(D)$ to $\End(\check{D}_k)$.
\end{prop}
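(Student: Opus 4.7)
The plan is to assemble the proposition from the two lemmas already proved (well-definedness in Lemma~\ref{lem:Phi3_well_defined} and surjectivity in Lemma~\ref{lem:Phi3_surjective}) and to verify the two remaining properties: that $\Phi$ respects composition and the identity, and that $\Phi$ is injective. Together, these four facts yield exactly the statement that $\Phi$ is a bijective monoid homomorphism.

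First I would dispose of injectivity, which is the easy part. If $\Phi(\varphi_1)=\Phi(\varphi_2)$, then, restricting to $V\subseteq V'$ and using that $\varphi'(x)=\varphi(x)$ for $x\in V$ by definition, we recover $\varphi_1=\varphi_2$ on $V$, and so $\varphi_1=\varphi_2$ as elements of $\End(D)$.

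Next I would verify the homomorphism properties. For the identity, $\mathrm{id}_D'$ acts as $\mathrm{id}_D$ on $V$, and for $x\in V_{c,a}$ it sends $x$ to $x_{c,\mathrm{id}_D(a)}=x_{c,a}$, which is $x$ itself since the unique homomorphism $H^k_{c,a}\to H^k_{c,a}$ guaranteed by Lemma~\ref{lem:rigid} is the identity; hence $\Phi(\mathrm{id}_D)=\mathrm{id}_{\check{D}_k}$. For composition I need $(\varphi_1\circ\varphi_2)'=\varphi_1'\circ\varphi_2'$. On $V$ this is immediate from the definition. For $x\in V_{c,a}$, by definition $\varphi_2'(x)=x_{c,\varphi_2(a)}\in V_{c,\varphi_2(a)}$, so $\varphi_1'(\varphi_2'(x))=(x_{c,\varphi_2(a)})_{c,\varphi_1(\varphi_2(a))}$, whereas $(\varphi_1\circ\varphi_2)'(x)=x_{c,\varphi_1(\varphi_2(a))}$. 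Both are vertices of $H^k_{c,\varphi_1(\varphi_2(a))}$ obtained from $x\in V_{c,a}$ via a homomorphism $H^k_{c,a}\to H^k_{c,\varphi_1(\varphi_2(a))}$; since $H^k$ is rigid (Lemma~\ref{lem:rigid}), there is only one such homomorphism, and the two expressions coincide.

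Finally I would collect the pieces: Lemma~\ref{lem:Phi3_well_defined} shows $\Phi$ is a well-defined map $\End(D)\to\End(\check{D}_k)$; the previous paragraph shows it is a monoid homomorphism; injectivity was handled above; and Lemma~\ref{lem:Phi3_surjective} provides surjectivity. The only nontrivial bookkeeping is the functoriality argument in the composition step, where one must remember that the symbol $x_{c,a}$ refers canonically to images under the unique rigid identifications among the copies $H^k_{c,a}$; I expect this to be the main source of notational subtlety, but no further combinatorial work is needed beyond an appeal to Lemma~\ref{lem:rigid}.
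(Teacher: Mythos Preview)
Your proposal is correct and follows essentially the same route as the paper: both combine Lemmas~\ref{lem:Phi3_well_defined} and~\ref{lem:Phi3_surjective} with the easy verifications of injectivity, identity, and compatibility with composition. The paper is terser, writing $\varphi'(x_{c,\psi(a)})=x_{c,\varphi(\psi(a))}$ directly, whereas you spell out that the identity $(x_{c,\varphi_2(a)})_{c,\varphi_1(\varphi_2(a))}=x_{c,\varphi_1(\varphi_2(a))}$ holds because the composite of the unique homomorphisms between the rigid copies is again the unique homomorphism; this extra care is appropriate and does not change the underlying argument.
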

\begin{proof}  Let $D=(V,\{A_c\mid c\in K\})$ and  $\check{D}_k=(V',E')$. By Lemmas \ref{lem:Phi3_well_defined} and \ref{lem:Phi3_surjective} $\Phi(\End(D))=\End(\check{D}_k)$. It is clear that $\Phi$ is injective and that $\Phi(\textrm{id}_D)=\textrm{id}_{\check{D}_k}$. Now let $\varphi,\psi\in\End(D)$ and $x\in V'$. Then
\begin{align*}
(\Phi(\varphi)\circ\Phi(\psi))(x)&=\left.\begin{cases}\varphi'(\psi(x)) =\varphi(\psi(x))&\text{if }x\in V \\ \varphi'(x_{c,\psi(a)})=x_{c,\varphi(\psi(a))} &\text{if }x\in V_{c,a} \text{ for some } c\in K \text{ and } a\in A_c\end{cases}\right\}= \\
&=\Phi(\varphi\circ\psi)(x).
\end{align*}
\end{proof}

\begin{proof}[Proof of Lemma~\ref{lem:third_step}] As a consequence of Remark~\ref{rem:degree} and Proposition~\ref{prop:iso} we get that $\check{D}_k$ has maximum degree %{\color{blue}[At most?]} 
$\max(\Delta(D),3)$ and $\End(\check{D}_k)\cong \End(D)$.%, which concludes the proof of Lemma~\ref{lem:third_step}.
\end{proof}

%Hedrl\'in and Pultr showed that the class of all graphs is $\End$-universal~\cite{HP64,HP65}. This result motivates the question: how sparse can a $\End$-universal graph class be? Babai and Pultr proved that such graph classes cannot exclude topological minors \cite{BP80}.

As a first application of the results of the present section we will construct a class of bounded expansion representing all monoids. Such a construction was already known to Ne{\v s}et{\v r}il and Ossona de Mendez~\cite{NOdM17}, but we present a self-contained proof here in order to illustrate these techniques. Moreover, our bound on the expansion seems more explicit. We need some definitions.

For the rest of this section denote by $|D|$ the number of vertices of an (arc-colored  directed) graph $D$ and by $\|D\|$ its number of edges/arcs (if arcs can have different colors, they are counted once for each color they have). The \emph{density} of an (arc-colored directed) graph $D$ is the quantity $\frac{\|D\|}{|D|}$. The \emph{radius} of an undirected, connected graph $G=(V,E)$ is defined as $\mathrm{rad}(G)=\underset{u\in V}{\min}\,\underset{v\in V}{\max}\, d(u,v)$, and a vertex $u$ with $\underset{v\in V}{\max}\,d(u,v)=\mathrm{rad}(G)$ is a \emph{central vertex}. For a non-negative integer $r$, an undirected graph $H$ is an \emph{$r$-shallow minor} of $G$ if there exists a partition $\{V_1,...,V_{\ell}\}$ of $V$ such that 
\begin{enumerate}[i)]
    \item $\mathrm{rad}(G[V_i])\leq r$ for each $1\leq i\leq \ell$;
    \item there is an injective mapping $v\mapsto V_{i(v)}$ from the vertex set of $H$ to $\{V_1,...,V_{\ell}\}$, satisfying that if two vertices $u, v$ of $H$ are adjacent then there is an edge between $V_{i(u)}$ and $V_{i(v)}$.
\end{enumerate}
A graph class $\mathcal G$ is said to have \emph{bounded expansion} if there is a function $f:\mathbb N\rightarrow\mathbb R $ such that any $r$-shallow minor of a graph in $\mathcal G$ has density at most $f(r)$. See~\cite{NOdM12} for equivalent definitions.

%Ne{\v s}et{\v r}il and Ossona de Mendez asked whether there is a $\End$-universal class with bounded expansion.

%{\color{blue}[It seems to me that they know that there is a $\End$-universal nowhere dense class (although they don't justify it there), but they don't know if there is a $\End$-universal class with bounded expansion. But I'm not sure I understand what they say below Problem 19.2]}

We give an easy improvement of Lemma~\ref{lem:basic} that will be useful for the proof of Proposition~\ref{prop:boundedexpansion}. The idea has been present in the literature, see e.g.~\cite[Theorem 7.4.4]{Kna-19} or~\cite[Figure 4.12]{HN04} and we indeed prove a stronger result later on, see Lemma~\ref{lem:second_step}, but we still propose this variant here, because of its simplicity. 
\begin{lemma}\label{lem:basic2} Let $M$ be a monoid of order $n$ and $C$ a generating set of $M$ with $|C|=m\leq n-1$. Then there exists a loopless arc-colored digraph $D$ with $\delta^{+}(D), \delta^{-}(D)\geq 1$ such that $M\cong\End(D)$. Moreover, $|D|=n(m+2)$, $\|D\|=2n(m+1)$, and the set of colors of the arcs has size $|K|=2(m+1)$.
\end{lemma}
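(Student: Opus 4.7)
The plan is to modify the colored Cayley graph from Lemma~\ref{lem:basic} by two surgeries: a subdivision of every arc (to kill potential loops and cut down each target degree to one) plus the addition of a pendant at every vertex of $M$ (to guarantee $\delta^+,\delta^-\geq 1$ everywhere and to hit the exact vertex/arc counts). More precisely, I start from ${\Cay_{\mathrm{col}}}(M,C)$ and for each $c\in C$ introduce two fresh colors $c^+,c^-$ and subdivide every arc $(v,vc)\in A_c$ by a new vertex $u_{v,c}$, creating the $c^+$-arc $(v,u_{v,c})$ and the $c^-$-arc $(u_{v,c},vc)$. Then I introduce two further colors $\ast^+,\ast^-$ and for each $v\in M$ a new vertex $v^\ast$ together with a $\ast^+$-arc $(v,v^\ast)$ and a $\ast^-$-arc $(v^\ast,v)$. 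The resulting arc-colored digraph $D$ has vertex set partitioned into $M\,\dotcup\,\{u_{v,c}\mid v\in M,\,c\in C\}\,\dotcup\,\{v^\ast\mid v\in M\}$.

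The numerical claims are then immediate: $|D|=n+nm+n=n(m+2)$, $\|D\|=2nm+2n=2n(m+1)$, and $|K|=2m+2=2(m+1)$. Every loop $(v,v)$ of ${\Cay_{\mathrm{col}}}(M,C)$ has been replaced by a directed $2$-cycle through $u_{v,c}$, so $D$ is loopless. At every vertex we see at least one outgoing and one incoming arc: $v\in M$ has $(v,v^\ast)$ outgoing and $(v^\ast,v)$ incoming; $u_{v,c}$ has the two subdivision arcs; $v^\ast$ has the two pendant arcs. Hence $\delta^+(D),\delta^-(D)\geq 1$.

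It remains to show $\End(D)\cong M$, and I will do this by establishing a monoid isomorphism between $\End(D)$ and $\End({\Cay_{\mathrm{col}}}(M,C))$ via restriction to $M$, which combined with Lemma~\ref{lem:basic} yields the result. The key observation is that the three vertex classes are separated by the colors of their outgoing arcs: outgoing $c^+$-arcs (for any $c\in C$) and outgoing $\ast^+$-arcs occur only at vertices of $M$; outgoing $c^-$-arcs (for a fixed $c$) occur only at the subdivision vertices $u_{\cdot,c}$; and outgoing $\ast^-$-arcs occur only at pendants. Because any $\varphi\in\End(D)$ preserves arc colors, it must therefore satisfy $\varphi(M)\subseteq M$, $\varphi(u_{v,c})=u_{x,c}$ for some $x\in M$, and $\varphi(v^\ast)=w^\ast$ for some $w\in M$. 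Tracing the $c^+$-arc through $u_{v,c}$ pins down $x=\varphi(v)$; tracing the $c^-$-arc then forces $\varphi(vc)=\varphi(v)c$ for every $v\in M$, $c\in C$; and tracing the $\ast^+$-arc forces $w=\varphi(v)$. Thus $\psi:=\varphi|_M\in\End({\Cay_{\mathrm{col}}}(M,C))$, and conversely any such $\psi$ extends uniquely by $u_{v,c}\mapsto u_{\psi(v),c}$ and $v^\ast\mapsto \psi(v)^\ast$ to an endomorphism of $D$. These two assignments are mutually inverse and respect composition, yielding the required monoid isomorphism.

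The main point where care is needed is this last paragraph: the color-based separation of vertex classes, together with the arc-tracing that forces $\varphi(vc)=\varphi(v)c$, is what makes the whole surgery behave as a faithful replacement of ${\Cay_{\mathrm{col}}}(M,C)$. Everything else is bookkeeping of counts and local degrees.
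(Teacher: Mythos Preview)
Your proof is correct and is essentially the same construction as the paper's, just described in a different order: the paper first adjoins a loop of a new color at every vertex of $\Cay_{\mathrm{col}}(M,C)$ and then subdivides every arc (replacing color $c$ by the pair $c,c'$), whereas you first subdivide and then attach a pendant $2$-cycle; your pendant $v\xrightarrow{\ast^+}v^\ast\xrightarrow{\ast^-}v$ is precisely the subdivided loop in the paper's version, and your $c^+,c^-$ are the paper's $c,c'$. Your color-separation argument for $\End(D)\cong\End(\Cay_{\mathrm{col}}(M,C))$ is in fact more carefully spelled out than the paper's one-line justification.
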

\begin{proof}
We start with $C\subseteq M$ a generating set of $M$ and set $D''=\Cay_{\mathrm{col}}(M,C)$. Hence, by Lemma~\ref{lem:basic} we have $M\cong\End(D'')$. Now, we add a new color which consists of a loop at every vertex of $D''$ to obtain $D'$. Clearly, $\End(D')\cong\End(D'')$. Now, for each color $c$ of $D'$, let $c'$ be a new color, in a way that $c'_1\neq c'_2$ for different colors $c_1,c_2$. To obtain $D$ we subdivide each arc of $D'$ once in the following way: we replace it by a directed path of length $2$ (without the extreme vertices), the first arc of which (the one pointing towards the new vertex) has color $c$, and the other has color $c'$, where $c$ is the color of the original arc. Every endomorphism of $D'$ extends naturally to an endomorphism $\varphi$ of $D$ by mapping a subdivision vertex of a $c$-arc $(u,v)$ to the subdivision vertex of the $c$-arc $(\varphi(u),\varphi(v))$. Conversely, by the unique choice of colors $c,c'$ any endomorphism of $D$ must map subdivision vertices of $c$-arcs to subdivision vertices of $c$-arcs. This yields that all endomorphisms of $D$ arise from endomorphisms of $D'$ and allows to conclude $\End(D)\cong\End(D')$. Moreover, $D$ is loopless and $\delta^{+}(D), \delta^{-}(D)\geq 1$. The number of vertices, arcs and colors follows from the construction. %{\color{gray}It can also be deduced that $\Delta(D)\leq (n+1)m+1\leq n^2$.}{\color{blue}(can be retired)}
\end{proof}

%\textcolor{red}{In what follows we give an alternative proof of a result of Ne{\v s}et{\v r}il and Ossona de Mendez moreover improving the boundedness of the expansion of the class. Namely we show:}
\begin{prop}\label{prop:boundedexpansion} Let $f:\mathbb R^+\rightarrow\mathbb R^+$ be a surjective, strictly increasing function. There exist an $\End$-universal graph class $\mathcal G$ and $r_0\in\mathbb R^+$ such that if $H$ is an $r$-shallow minor of a graph in $\mathcal G$ then $\frac{\|H\|}{|H|}\leq f(r)$ for any $r\geq r_0$.  
\end{prop}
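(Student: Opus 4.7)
The plan is to combine the \v{s}ip-product of this section with the Cayley-graph construction of Lemma~\ref{lem:basic2}, tuning the Hedrl\'in-Pultr parameter $k$ as an aggressive function of $|M|$. For each finite monoid $M$, apply Lemma~\ref{lem:basic2} to get a loopless arc-colored digraph $D_M$ with $\End(D_M)\cong M$ and $\delta^+(D_M),\delta^-(D_M)\geq 1$, and let $G_M$ be the \v{s}ip-product of $D_M$ with a parameter $k(M)$ to be fixed. Proposition~\ref{prop:iso} yields $\End(G_M)\cong M$, so the class $\mathcal G:=\{G_M\}$ is $\End$-universal regardless of the choice of $k(M)$; the work is to pick $k(M)$ so that sufficiently deep shallow minors of graphs in $\mathcal G$ become uniformly sparse.

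The geometric input on $G_M$ is that any cycle either lies in a single gadget $H^{k(M)}_{c,a}$ (length at least $2k(M)+5$) or crosses some gadget through a $c^+$-to-$c^-$ subpath inside $H^{k(M)}$ of length $\Omega(k(M))$, by the choice $c^+\in H^+\subseteq Z_1$ and $c^-\in H^-\subseteq Z_2$. Hence distinct vertices of $V(D_M)$ are at $G_M$-distance $\Omega(k(M))$, and every vertex of $G_M$ outside $V(D_M)$ has degree at most $3$. Whenever $k(M)$ is large compared with $r$, each branch set $V_i$ of any $r$-shallow minor $H$ of $G_M$ is therefore contained in a ball of $G_M$ of radius $r$ (which is a tree, by the girth bound) and meets $V(D_M)$ in at most one vertex (by the pairwise-distance bound). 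Split the branch sets accordingly into \emph{hub} branch sets (containing one vertex of $V(D_M)$) and \emph{gadget} branch sets (contained in a single copy of $H^{k(M)}$).

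Bounding $\|H\|$ then amounts to: no edge of $H$ can join two hub branch sets, since distinct hubs are at $G_M$-distance $>2r+1$; intra-gadget edges of $H$ contribute at most $c_1(r)\cdot|H|$ in total, where $c_1(r)$ is the universal bound on the $r$-shallow minor density of the planar, $\Delta\leq 3$ graph $H^{k(M)}$ (this is where we use that planar bounded-degree graphs have bounded expansion); and hub-to-gadget edges of $H$ contribute at most $2\cdot|H|$ in total, because each gadget creates at most two such edges (via $c^+$ and $c^-$) and each gadget making such a contribution supplies at least one distinct vertex to $V(H)$. Setting $c(r):=c_1(r)+2$, this gives $\|H\|/|H|\leq c(r)$ with $c$ depending only on $r$. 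By surjectivity of $f$, pick $r_0$ with $f(r_0)\geq c(r_0)$ and choose $k(M)$ to grow fast enough with $|M|$ that for every $r\geq r_0$ only finitely many $G_M\in\mathcal G$ (those with $|M|$ below a threshold depending on $r$) fall outside the regime $k(M)\gg r$; for these finitely many exceptional graphs the density of their $r$-shallow minors is bounded by a constant, which can be absorbed into $f(r)$ by further enlarging $r_0$.

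The main obstacle is that $\Delta(G_M)$ grows with $|M|$, so neither bounded-degree nor purely girth-based shallow-minor estimates apply as black boxes. The decisive structural feature is the ``few, far-apart hubs'' geometry of $G_M$: the high-degree vertices of $G_M$ are exactly those of $V(D_M)$ and the subdivided gadgets push them $\Omega(k(M))$ apart, so for $r$ much smaller than $k(M)$ each branch set sees at most one hub and the problem reduces to controlling shallow minors of the planar bounded-degree gadgets $H^{k(M)}$ glued at isolated high-degree vertices.
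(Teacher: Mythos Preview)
Your approach is the same as the paper's: build $G_M$ as the \v{s}ip-product of the digraph from Lemma~\ref{lem:basic2} with a Hedrl\'in--Pultr parameter $k=k(|M|)$, then split into the regime $k\gg r$ (where the far-apart-hubs geometry gives a constant density bound) and the regime of ``small'' $|M|$. Two points:

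The claim ``each gadget creates at most two hub-to-gadget edges (via $c^+$ and $c^-$)'' is not correct as written: a hub branch set $V_x$ can extend into a gadget $g$ beyond the single edge $\{x,c^+\}$ and then be adjacent to several gadget branch sets inside $g$. The fix is easy and is essentially what the paper does: since every gadget branch set lies in a single copy of $H^{k}$ (which has only four vertices of degree $3$) its degree in $H$ is bounded by an absolute constant; as no edge of $H$ joins two hub branch sets, every edge of $H$ is incident to a gadget branch set, and the constant density bound follows. The paper phrases this as ``any $V_i$ not containing a vertex of $D$ has degree at most $6$'' and then shows the remaining high-degree vertices are at pairwise $H$-distance more than $2$.

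The genuine gap is in your treatment of the exceptional case. You write that for the finitely many $G_M$ with $k(M)$ not $\gg r$ ``the density of their $r$-shallow minors is bounded by a constant, which can be absorbed into $f(r)$ by further enlarging $r_0$.'' This does not close, because the exceptional set \emph{grows with $r$}: for each $r$ it consists of all $M$ with $|M|$ below a threshold $N(r)\to\infty$, and the naive bound on the density of a minor of $G_M$ is of order $\|G_M\|\asymp k(|M|)\,|M|^2$, which \emph{also} grows with $k$. For sublinear $f$ (e.g.\ $f(r)=\sqrt r$) no choice of $k(\cdot)$ makes $\max_{|M|<N(r)}\|G_M\|\le f(r)$ hold for all large $r$. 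The paper's key extra observation is that, because $\check D_{k_n}$ is connected and each branch set is connected, any minor $H$ satisfies
\[
\|H\|\le \|\check D_{k_n}\|-|\check D_{k_n}|+|H|,
\]
and $\|\check D_{k_n}\|-|\check D_{k_n}|=7n(m+1)-n=O(n^2)$ is \emph{independent of $k_n$} (each planar gadget contributes a bounded surplus of edges over vertices). Hence every minor of $G_M$ has density $O(|M|^2)$ regardless of $k$, and one can then choose $k_n$ so that the threshold $N(r)$ obeys $7N(r)^2\le f(r)$; concretely $k_n=6n\lceil h(n)\rceil$ with $h$ the inverse of $x\mapsto\sqrt{f(x)/7}$. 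Adding this single estimate to your outline makes the argument go through.
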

\begin{proof}
Let $M$ be a monoid with $|M|=n$ having a generating set of size $m$. It can be assumed that $m+1\leq n$. By Lemma~\ref{lem:basic2} there is a loopless arc-colored digraph $D$ with $\delta^{+}(D), \delta^{-}(D)\geq 1$ and $M\cong\End(D)$. Moreover, $|D|=n(m+2)$ and $\|D\|=2n(m+1)$.

%Let $C\subseteq M$ with $\langle C\rangle=M$ and $|C|=c\leq n-1$. Consider the arc-colored digraph $D=\Cay_{\mathrm{col}}(M,C)$, which by Lemma~\ref{lem:basic} has the $M$ as endomorphism monoid. 

%In order to apply Proposition~\ref{prop:iso}, we have to replace $D$ by a loopless arc-colored digraph $D'$ with $\delta^{+}(D'), \delta^{-}(D')\geq 1$ and $\End(D)\cong\End(D')$. There might be quicker ways of of doing it, but economizing on space, we take $D'$ as the colored blow-up of $D$, see Lemma~\ref{lem:second_step}.

We now apply Proposition~\ref{prop:iso} to $D$ to construct the undirected graph $\check{D}_{k_n}$. The value of $k_n$ will be specified later. % We will  make sure that the resulting graph is "sparse enough" by taking $k_n$ big enough. \comment{got unitl here}%\comment{clarify the structure of the proof fisrt, i.e., we have distinguish of $r$ is large or small with respect to $k_n$, maybe ut would even be better to already here start witgh $f$, $g$, $r_0$, and define $k_n$ and then continue with the proof}. 
We have that
\begin{align*}
|\check{D}_{k_n}|&=|D|+|H^{k_n}|\|D\|=(12k_n+15)n(m+1)+n, \\
\|\check{D}_{k_n}\|&=(\|H^{k_n}\|+2)\|D\|=(12k_n+22)n(m+1).
\end{align*}
%We are going to see that, with the right choice of $k_n$, the class of graphs obtained this way from all monoids %It remains to check that the so obtained class of graphs 
%is of bounded expansion. 

Let $H$ be an $r$-shallow minor of $\check{D}_{k_n}$ arising from a partition $\{V_1,...,V_{\ell},V_{\ell+1},...,V_{\ell+\ell'}\}$ of the vertex set of $\check{D}_{k_n}$, where $V_1,...,V_{\ell}$ are the sets corresponding to vertices of $H$. %\comment{starting from here I am not sure if the proof can be shortened, do we really use all the computed bounds. if yes it is not clearly identified when}

%in particular, $\frac{\|H\|}{|H|}\leq 7n^2$. Since this last bound depends on $n$, it is useful only if $n$ is small compared to $r$. %This bound is still not what we need, because it grows with $n$. However, it is valid for any $r$, so is going to be useful. 
%Moreover, we are going to see that, when $n$ is large, the bound can be improved to a constant if $k_n$ grows sufficiently fast. 

%We will have to bound the density  in terms of $r$. If $n$ is large in terms of $r$ we will choose $k_n$ large enough, so that the vertices of $\check{D}_{k_n}$ are far away from each other and {\color{blue}[Instead of what comes (since now the strategy is different. In fact, $(*)$ won't be needed until the end, so maybe we will have to reallocate paragraphs): ``...and the (edge) density of $H$ is low."]} {\color{gray}$\ell$ is big, which lowers $(*)$.}\comment{I moved that paragraph to the back}
%The intuition of what happens with %at the level of 
%the constructed graphs is the following: when $k_n$ is large, the vertices of $\check{D}_{k_n}$ are far away from each other; then $\ell$ has to be big, lowering $(*)$. 

We will later choose $k_n$ as an increasing function of $n$ so our graphs satisfy the claim of the theorem. We first consider the case that $r\leq\frac{k_n}{6}<\frac{g(H^{k_n})}{12}$. Note that this implies $\mathrm{diam}(\check D_{k_n}[V_i])<\frac{g(H^{k_n})}{6}$ for any vertex set $V_i$ of the partition. Let $S_{\leq 6}$ be the set of vertices of $H$ with degree at most $6$, and let $S_{>6}$ be the set of the other vertices of $H$. Observe that, since $\mathrm{diam}(\check D_{k_n}[V_i])<\frac{g(H^{k_n})}{6}$, every element of $S_{>6}$ corresponds to a set $V_i$ that contains a vertex of $D$. %Following that observation, an injection from $S_{>6}$ to the set of vertices of $D$ can be established.

\textbf{Claim.} $|S_{\leq 6}|\geq\sum_{v\in S_{>6}}\deg(v)$.

%\begin{proof} %{\color{gray}We argue by contradiction. Assume that two different vertices $v,v'\in S_{>6}$ share a neighbour $u\in S_{\leq 6}$. Let $V_v,V_{v'},V_{u}$ be the sets of the partition from where they arise, respectively. Note that the pairs $V_v,V_{u}$ and $V_{u},V_{v'}$ are adjacent in $\check D_{k_n}$. Moreover, we know that there are vertices $x,x'$ of $D$ with $x\in V_v$, $x'\in V_{v'}$. Therefore, the distance between $x$ and $x'$ in $\check D_{k_n}$ is \[d_{\check D_{k_n}}(x,x')\leq d_{\check D_{k_n}}(x,V_u)+\mathrm{diam}(\check D_{k_n}[V_u])+d_{\check D_{k_n}}(V_u,x')<\frac{g(H^{k_n})}{2}+2,\]
%but this contradicts the fact that $d(x,x')\geq \frac{g(H^{k_n})}{2}+2$, as it can be easily verified from the construction of the \v sip-product $\check D_{k_n}$.
%}
\noindent\textit{Proof of the Claim.}
Let $v,v'\in S_{>6}$ be two different vertices of $H$. We show $d_H(v,v')>2$, which implies the claim. Let $v=v_0,v_1,...,v_s=v'$ be the vertices of a shortest $H$-path between $v$ and $v'$, and let $V_{v_0},...,V_{v_s}$ be the sets of the partition from where they arise, respectively. Note that the pairs $V_{v_i},V_{v_{i+1}}$ are adjacent in $\check D_{k_n}$ for $i\in\{0,1,...,s-1\}$. Moreover, we know that there are vertices $x,x'$ of $D$ with $x\in V_v$, $x'\in V_{v'}$. Therefore, the distance between $x$ and $x'$ in $\check D_{k_n}$ is \[d_{\check D_{k_n}}(x,x')\leq\sum_{i=0}^{s}\mathrm{diam}
(\check D_{k_n}[V_{v_i}])+s<(s+1)\frac{g(H^{k_n})}{6}+s.\]
On the other hand, $d_{\check D_{k_n}}(x,x')\geq \frac{g(H^{k_n})}{2}+2$, as can be easily derived from the construction of the \v s\'ip-product $\check D_{k_n}$. This implies that $d_H(v,v')=s>2$.
\hfill $\blacksquare$
%\end{proof}

Therefore, if $r\leq\frac{k_n}{6}$ the  density of $H$ is 
\[\frac{\|H\|}{|H|}\leq\frac{6|S_{\leq 6}|+\sum_{v\in S_{>6}}\deg(v)}{2(|S_{\leq 6}|+|S_{>6}|)}\leq\frac{7|S_{\leq 6}|}{2(|S_{\leq 6}|+|S_{>6}|)}\leq \frac{7}{2}.\]

Let $f:\mathbb R^+\rightarrow\mathbb R^+$ be any given surjective, strictly increasing function. Call $h(x)$ the inverse of $\sqrt{\frac{f(x)}{7}}$. At last, define $k_n=6n\lceil h(n)\rceil$. Note that the condition $|K|\leq\frac{g(H^{k_n})}{2}-6$ for the construction of $\check{D}_{k_n}$, where $K$ is the set of colors of the arcs of $D$, is met ($|K|\leq 2n$ by Lemma~\ref{lem:basic2}). Now we set $\mathcal G$ as the class of graphs $\check{D}_{k_n}$ obtained in this way from all monoids. Let us check that for $r_0=h(1)$ the properties claimed in the theorem are satisfied. Consider any $r\geq r_0$, and let $n'$ be the integer with $h(n'-1)\leq r < h(n')$. If $|M|=n\geq n'$, then we have $ r\leq\frac{k_n}{6}$. Hence, the density of $H$ is bounded by $\frac{7}{2}\leq f(r_0)\leq f(r)$. If otherwise $|M|=n<n'$, we use that  $\check D_{k_n}$ is connected and bound the density $\frac{\|H\|}{|H|}$ of $H$ from above as follows:
\[\frac{\|\check{D}_{k_n}\|-\|\check{D}_{k_n}[V_1]\|-...-\|\check{D}_{k_n}[V_{\ell}]\|-(\|\check{D}_{k_n}[V_{\ell+1}]\|+1)-...-(\|\check{D}_{k_n}[V_{\ell+\ell'}]\|+1)}{\ell}\leq \]
\[\frac{\|\check{D}_{k_n}\|-|V_1|-...-|V_{\ell+\ell'}|+\ell}{\ell}\leq\frac{\|\check{D}_{k_n}\|-|\check{D}_{k_n}|+\ell}{\ell}\leq\frac{7n(m+1)-n+\ell}{\ell}\leq \]
\[7(n'-1)^2\leq 7h^{-1}(r)^2=f(r).\]

%Hence setting $\mathcal{C}$ as the class of graphs $\check{D}_{k_n}$ obtained from all monoids has the properties claimed in the theorem.
\end{proof}

%\textcolor{red}{Note that Proposition~\ref{prop:boundedexpansion} only resolves the first instance explicitly mentioned by Ne\v{s}et\v{r}il and Ossona de Mendez of the much more general~\cite[Problem 19.2]{NOdM12}, which remains open, even if some significant related results have been obtained in~\cite{NOdM17}.}

\section{Blowing up vertices and representations of groups}\label{sec:blowup}
This section gives the proof of the following result, which as a by-product allows us to show that every $k$-cancellative monoid is the endomorphism monoid of a graph of maximum degree at most $\max(k+1,3)$ (Theorem~\ref{teo:groups}). It can be seen as an improvement of Lemma~\ref{lem:basic2}:
\begin{lemma}\label{lem:second_step} 
For every arc-colored digraph $D$ there exists an arc-colored loopless digraph $D'$ such that $\End(D')\cong\End(D)$ and 
\begin{enumerate}[i)]
   \item $\Delta(D')\leq\max(\Delta^{\pm}(D)+1,3)$;
   \item $\delta^+(D')=\delta^-(D')=1$.
\end{enumerate}
\end{lemma}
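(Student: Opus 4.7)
The plan is to prove Lemma~\ref{lem:second_step} by an explicit vertex-splitting construction: each vertex $v$ of $D$ will be replaced by a gadget consisting of a rigid auxiliary cycle of ``hub'' vertices together with ``leaf'' vertices, one leaf per (colour, direction) incidence type actually appearing at $v$. The leaves will host the original coloured arcs of $D$, while the cycle will carry only auxiliary arcs. The point of this separation is that hubs will have only auxiliary arcs and degree at most $3$, whereas leaves will carry the original arcs plus exactly one auxiliary arc, giving degree at most $\Delta^{\pm}(D)+1$.

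Concretely, I would fix an enumeration $i:K\times\{+,-\}\to\{1,\dots,2|K|\}$. For each $v\in V(D)$, I would introduce $2|K|$ new hub vertices $h_1^v,\dots,h_{2|K|}^v$ linked into a directed cycle in which the arc $h_{i-1}^v\to h_i^v$ carries a new colour $\gamma_i$, distinct for each $i$ (this is what will make the cycle rigid). For every $(c,\epsilon)\in K\times\{+,-\}$ with $\deg_c^{\epsilon}(v)\geq 1$, I would add a leaf $v^{c,\epsilon}$ together with a single auxiliary arc of a new colour $\beta$ between it and $h_{i(c,\epsilon)}^v$, oriented $h_{i(c,\epsilon)}^v\to v^{c,+}$ when $\epsilon=+$ and $v^{c,-}\to h_{i(c,\epsilon)}^v$ when $\epsilon=-$. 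Finally, I would replace each arc $(u,w)\in A_c$ of $D$ by the arc $(u^{c,+},w^{c,-})$ of colour $c$ in $D'$.

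The verifications of (i) and (ii) should then be routine: each hub carries at most $2$ cycle arcs and at most $1$ leaf arc; each leaf has one $\beta$-arc and at most $\Delta^{\pm}(D)$ original $c$-arcs; the cycle supplies both in- and out-degree at every hub, while the prescribed $\beta$-orientation supplies at each leaf precisely the direction not already provided by its original arcs. For the isomorphism $\End(D')\cong\End(D)$, I would define $\Phi:\End(D)\to\End(D')$ by $h_i^v\mapsto h_i^{\varphi(v)}$ and $v^{c,\epsilon}\mapsto\varphi(v)^{c,\epsilon}$, which is well-defined because $\varphi\in\End(D)$ preserves incidence types (whenever $v$ has a $(c,\epsilon)$-arc, so does $\varphi(v)$). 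Conversely, for $\varphi'\in\End(D')$, the distinct colours $\gamma_i$ force $\varphi'(h_i^v)=h_i^{u_v}$ for some $u_v\in V(D)$ independent of $i$ (by chasing the cycle around), and then the $\beta$- and colour-$c$ arcs force $\varphi'(v^{c,\epsilon})=u_v^{c,\epsilon}$; the assignment $v\mapsto u_v$ is the corresponding endomorphism of $D$, and the two maps are mutually inverse.

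The principal difficulty I anticipate is to arrange an auxiliary structure that is simultaneously rigid (so the recovery argument above works) and low-degree (so the bound $\max(\Delta^{\pm}(D)+1,3)$ is met). A naive single path, or a central ``hub vertex'' linking all copies of $v$, either fails to be rigid or pushes some vertex's degree above $\Delta^{\pm}(D)+1$. The remedy proposed above is to concentrate the distinguishing information on a separate cycle of auxiliary vertices (inherently of degree at most $3$) and to attach each leaf to this cycle by a single auxiliary arc, so the leaf's total degree never exceeds $\Delta^{\pm}(D)+1$.
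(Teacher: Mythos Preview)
Your proposal is correct and is essentially the paper's own construction with different notation: your hubs $h_i^v$ are the paper's $(v_1^{\pm},c)$, your leaves $v^{c,\epsilon}$ are the paper's $(v_2^{\pm},c)$, your cycle colours $\gamma_i$ are the paper's $c^{+},c^{-}$, and your $\beta$ is the paper's colour $0$. The degree check, the looplessness (even a loop $(v,v)\in A_c$ becomes the non-loop $(v^{c,+},v^{c,-})$), and the recovery of $\psi\in\End(D)$ from $\varphi'\in\End(D')$ via the rigid coloured cycle all match the paper's Lemmas~\ref{lem:degreesloops}, \ref{lem:Psi_well_defined}, \ref{lem:Psi_surjective} and Proposition~\ref{prop:monoiso}.
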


%So, throughout this section $D=(V,\{A_c\mid c\in K\})$ is a given digraph with $K$-colored arcs, where $K=\{1,...,|K|\}$. The aim is to construct a loopless digraph with $K'$-colored arcs $D'=(V',\{A'_c\mid c\in K'\})$ such that $\End(D')\cong\End(D)$ and 
%\begin{enumerate}[i)]
%   \item $\Delta(D')\leq\max(\Delta^{\pm}(D)+1,3)$;
 %  \item $\delta^+(D'),\delta^-(D')=1$.
%\end{enumerate}

% \subsection{Construction of $G'$}
% 
% \begin{defi} 

%For every $c\in K$ let $c^+,c^-$ be symbols. The set of colors $K'$ is defined as $ K'=\{c,c^+,c^-\mid c\in K\}\cup\{0\}$.
%\end{defi}

We will call the arc-colored digraph $D'$ the \emph{blow-up} of $D$. The idea for the construction of $D'$ is to blow up every vertex to a closed walk in order to locally reduce the degrees. But in order to preserve the endomorphism monoid new colors have to be introduced. See Figure~\ref{fig:blowup} for an illustration.

\begin{figure}[h]
    \centering
    \begin{minipage}{0.27\textwidth}
        \centering
        \includegraphics[width=1\textwidth]{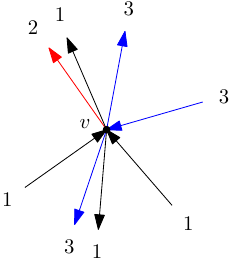}
    \end{minipage}\hfill
    \begin{minipage}{0.57\textwidth}
        \centering
        \includegraphics[width=1\textwidth]{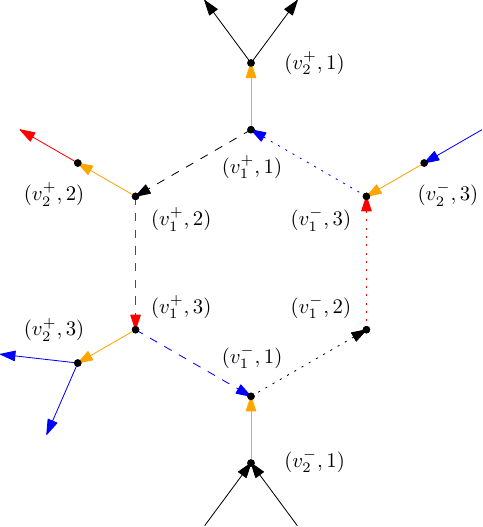}
    \end{minipage}
    \caption{Left: a vertex $v$ of $D$ with its outgoing and ingoing arcs and the labels of the colors. Right: the vertices of the walk $W_v$, their $0$-neighbours, and their ingoing and outgoing $K$-arcs in $D'$, assuming that $ K=\{1,2,3\}$. The arcs in $A'_0$ are orange, the arcs in $A'_{c^+}$ are dashed, the arcs in $A'_{c^-}$ are dotted, and the arcs in $A'_c$ are solid.}\label{fig:blowup}
\end{figure}

Let us now describe the construction more formally.
Let $D=(V,\{A_c\mid c\in K\})$. We define the set of colors of the arcs of $D'$ as $K'=\{c,c^+,c^-\mid c\in K\}\cup\{0\}$. 

Moreover, given $V$ we define new sets  $V_1^+=\{v_1^+\mid v\in V\}$, $V_1^-=\{v_1^-\mid v\in V\}$, $V_2^+=\{v_2^+\mid v\in V,\ \deg^+(v)\geq 1\}$ and $V_2^-=\{v_2^-\mid v\in V,\ \deg^-(v)\geq 1\}$. Finally, the set of vertices $V'$ of $D'$ is defined as $V'=(V_1^+\cup V_2^+\cup V_1^-\cup V_2^-)\times K$.

Assume that $K$ is the set of the first $|K|$ positive integers. To define the sets of arcs, for every $c\in K$ we set:
\[A'_0=\{((v_1^+,c'),(v_2^+,c'))\mid v_2^+\in V_2^+,\ c'\in K\}\cup\{((v_2^-,c'),(v_1^-,c'))\mid v_2^-\in V_2^-,\ c'\in K\},\]
\[A'_c=\{((u_2^+,c),(v_2^-,c))\mid (u,v)\in A_c\}, \]
\[A'_{c^+}=\begin{cases}
\{((v_1^+,c),(v_1^+,c+1))\mid v\in V\} &\text{if } c\neq | K| \\
\{((v_1^+,c),(v_1^-,1))\mid v\in V\} &\text{if } c=| K|,
\end{cases}\]
and, symmetrically,
\[A'_{c^-}=\begin{cases}
\{((v_1^-,c),(v_1^-,c+1))\mid v\in V\} &\text{if } c\neq | K| \\
\{((v_1^-,c),(v_1^+,1))\mid v\in V\} &\text{if } c=| K|.
\end{cases}\]

\bigskip

Let us first observe degrees and looplessness in $D'$:
\begin{lemma}\label{lem:degreesloops} Given an arc-colored digraph $D$, for the blow-up $D'$ the following hold:
\begin{enumerate}[i)]
   \item $D'$ is loopless, 
   \item $\Delta(D')\leq\max\,\{\Delta^{\pm}(D)+1,3\}$,
   \item $\delta^+(D')=\delta^-(D')=1$.
\end{enumerate}
\end{lemma}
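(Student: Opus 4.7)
The plan is a direct case analysis on the four vertex types of $D'$, namely $(v_1^+,c)$, $(v_2^+,c)$, $(v_1^-,c)$, and $(v_2^-,c)$, reading off the incident arcs of each from the definitions of $A'_0$, $A'_c$, $A'_{c^+}$, and $A'_{c^-}$.

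For (i), I would check one arc family at a time. Arcs of $A'_0$ always go between a type-$1$ and a type-$2$ vertex; arcs of $A'_c$ between a plus-vertex and a minus-vertex; and arcs of $A'_{c^\pm}$ either change the second coordinate (when $c\neq |K|$) or toggle the $\pm$-superscript (when $c=|K|$). In every case source and target differ, so $D'$ is loopless.

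For (ii), the key structural observation is that the arcs of $A'_{c^+}$ together with those of $A'_{c^-}$ form, for each $v\in V$, a single closed directed walk $W_v$ of length $2|K|$ visiting $(v_1^+,1),(v_1^+,2),\ldots,(v_1^+,|K|),(v_1^-,1),\ldots,(v_1^-,|K|)$ in that order and closing up via the $A'_{|K|^-}$-arc. Hence each vertex in $(V_1^+\cup V_1^-)\times K$ has in- and out-degree exactly $1$ from these colors, to which a single $A'_0$-arc may be added (outgoing at $(v_1^+,c)$ when $v\in V_2^+$, ingoing at $(v_1^-,c)$ when $v\in V_2^-$), giving total degree at most $3$. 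A vertex $(v_2^+,c)$ has exactly one ingoing $A'_0$-arc from $(v_1^+,c)$ and $\deg^+_c(v)\leq\Delta^\pm(D)$ outgoing $A'_c$-arcs, one per arc $(v,w)\in A_c$ of $D$; the count for $(v_2^-,c)$ is symmetric. Maximising over the four vertex types gives $\Delta(D')\leq\max(\Delta^\pm(D)+1,3)$.

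For (iii), the same tabulation shows that every vertex of $(V_1^+\cup V_1^-)\times K$ already has in- and out-degree at least $1$ by virtue of being on its walk $W_v$, and that each $(v_2^+,c)\in V'$ has its ingoing $A'_0$-arc from $(v_1^+,c)$ while each $(v_2^-,c)\in V'$ has its outgoing $A'_0$-arc to $(v_1^-,c)$; the remaining condition reduces to $\deg^+_c(v)\geq 1$ for $(v_2^+,c)$ and symmetrically $\deg^-_c(v)\geq 1$ for $(v_2^-,c)$, which is guaranteed by the intended reading of the construction, under which $(v_2^+,c)$ (resp.\ $(v_2^-,c)$) is placed in $V'$ precisely when there is a $c$-colored arc leaving (resp.\ entering) $v$ in $D$. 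There is no deep obstacle here; the whole argument is routine bookkeeping and the only technicality is the wrap-around at $c=|K|$ in the definitions of $A'_{c^\pm}$, which is exactly what makes $W_v$ close up.
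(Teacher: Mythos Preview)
Your proof is correct and follows exactly the same vertex-type case analysis as the paper, only spelled out in more detail; the paper's own proof computes $\deg((v_2^+,c))=\deg^+_c(v)+1$ and $\deg((v_2^-,c))=\deg^-_c(v)+1$, bounds the type-$1$ vertices by $3$, and dismisses (iii) as ``easy to see.'' Your caveat about (iii) is well taken: with the literal definition $V'=(V_1^+\cup V_2^+\cup V_1^-\cup V_2^-)\times K$ a vertex $(v_2^+,c)$ with $\deg^+(v)\geq 1$ but $\deg^+_c(v)=0$ would have out-degree $0$, so the color-specific reading you describe is indeed the intended one (and is what Figure~\ref{fig:blowup} depicts).
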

\begin{proof} Let $D=(V,\{A_c\mid c\in K\})$. By construction of $D'=(V',\{A'_c\mid c\in K'\})$, every vertex $v\in V$ with a loop will be blown up to a walk $W_v$ of length at least $2$, and clearly no new loops are introduced, so $D'$ is loopless. To see (ii), let $v\in V$ and $c\in K$. From the construction of $D'$ it is clear that $\deg((v_1^+,c)),\deg((v_1^-,c))\leq 3$, that $\deg((v_2^+,c))=\deg^+_c(v)+1$ if $v_2^+\in V_2^+$ and that $\deg((v_2^-,c))=\deg^-_c(v)+1$ if $v_2^-\in V_2^-$. Part (iii) is also easy to see.
\end{proof}

%\subsection{Description of the color-endomorphism monoid of $G'$}
Let us now describe the endomorphism monoid of $D'$. For any $\varphi\in\End(D)$, define $\varphi':V'\rightarrow V'$ as $\varphi'((v_i^*,c))=(\varphi(v)_i^*,c)$ for every $v\in V$, $c\in K$, $*\in\{+,-\}$ and $i\in\{1,2\}$. Note that since $\varphi\in\End(D)$ there are no problems with the definition when $i=2$.

\begin{lemma}\label{lem:Psi_well_defined}Let $D$ be an arc-colored digraph with blow-up $D'$. If $\varphi\in\End(D)$, then $\varphi'\in\End(D')$.
\end{lemma}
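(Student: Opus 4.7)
The plan is a direct case analysis over the four types of arcs of $D'$: those in $A'_0$, in $A'_c$, in $A'_{c^+}$, and in $A'_{c^-}$, for $c \in K$. Since $\varphi'$ is defined per coordinate of the pair $(v_i^*,c)$ --- acting as $\varphi$ on $v$ and as the identity on $i$, $*$, and $c$ --- each arc type will map to an arc of the same type, provided one checks well-definedness at the vertices indexed by $2$.

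First I would verify that $\varphi'$ is well-defined as a map $V'\to V'$. The only delicate vertices are those of the form $(v_2^+,c)$ and $(v_2^-,c)$, since $V_2^+$ and $V_2^-$ are restricted by the conditions $\deg^+(v)\geq 1$ and $\deg^-(v)\geq 1$ respectively. If $(v_2^+,c)\in V'$, then $v$ has some outgoing arc $(v,w)\in A_{c'}$ for some $c'\in K$; because $\varphi\in\End(D)$, the image $(\varphi(v),\varphi(w))$ lies in $A_{c'}$, so $\deg^+(\varphi(v))\geq 1$ and hence $\varphi(v)_2^+\in V_2^+$. The symmetric argument handles $V_2^-$.

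Next I would go through the four arc types. For an arc $((v_1^+,c'),(v_2^+,c'))\in A'_0$, the image $((\varphi(v)_1^+,c'),(\varphi(v)_2^+,c'))$ lies in $A'_0$ by the well-definedness argument, and symmetrically for the other half of $A'_0$. For an arc $((u_2^+,c),(v_2^-,c))\in A'_c$ with $(u,v)\in A_c$, the fact that $\varphi\in\End(D)$ yields $(\varphi(u),\varphi(v))\in A_c$, so the image arc is of the required form and lies in $A'_c$. For the arcs in $A'_{c^+}$ and $A'_{c^-}$, the colour class depends only on the second coordinates of the endpoints (the indices $1,2$, the signs $+,-$, and the element of $K$), none of which are changed by $\varphi'$; hence the image of any such arc lies in the same $A'_{c^+}$ or $A'_{c^-}$.

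The main obstacle --- modest as it is --- is simply bookkeeping: one must apply the correct degree condition when confirming that $\varphi(v)_2^+$ or $\varphi(v)_2^-$ belongs to the appropriate set, and one must remember that colours other than those in $K$ are encoded purely through the index structure and are therefore preserved automatically by $\varphi'$. No use of the colour-changing arcs linking successive copies via the $K$-indexing beyond this is required, because $\varphi'$ acts as the identity on that $K$-coordinate.
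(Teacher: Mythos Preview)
Your proposal is correct and follows essentially the same approach as the paper. The paper's own proof is terser---it dispatches the $(K'\setminus K)$-arcs with ``it is clear'' and then handles the $K$-arcs exactly as you do---so your explicit well-definedness check for the $V_2^\pm$ vertices and the case split over $A'_0$, $A'_{c^\pm}$ simply fill in details that the paper leaves implicit.
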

\begin{proof} Let $D=(V,\{A_c\mid c\in K\})$ with blow-up $D'=(V',\{A'_c\mid c\in K'\})$. It is clear that $\varphi'$ is compatible with all $( K'\setminus K)$-arcs. Now, for $c\in K$, consider a $c$-arc from a vertex $(u_2^+,c)$ to a vertex $(v_2^-,c)$, where $(u,v)\in A_c$. Since $\varphi\in\End(D)$, one has that $(\varphi(u),\varphi(v))\in A_c$, so indeed there is a $c$-arc from $(\varphi(u)_2^+,c)$ to $(\varphi(v)_2^-,c)$.
\end{proof}

Within the constructed arc-colored digraph $D'=(V',\{A'_c\mid c\in K'\})$ we will pay special attention to the \emph{blow-up} of a vertex $v$ for every $v\in V$, namely the closed $( K'\setminus K\setminus\{0\})$-walk $W_v$ defined by the sequence of vertices $(v_1^+,1),(v_1^+,2),...,(v_1^+,| K|)$, $(v_1^-,1),(v_1^-,2),...,(v_1^-,| K|),(v_1^+,1)$.

\begin{lemma}\label{lem:Psi_surjective} Let $D$ be an arc-colored digraph with blow-up $D'$. If $\varphi\in\End(D')$, then there exists $\psi\in\End(D)$ such that $\varphi=\psi'$.
\end{lemma}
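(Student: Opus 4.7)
The plan is to extract $\psi$ by watching how $\varphi$ acts on the closed walks $W_v$, since these walks are the only structural feature of $D'$ that remembers the original vertex set $V$. Precisely, I will argue that for every $v\in V$ there is a unique $u\in V$ with $\varphi(W_v)=W_u$, set $\psi(v)=u$, and then verify that $\psi\in\End(D)$ and $\varphi=\psi'$.

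First I would analyse the colored arcs at the ``frame'' vertices $(v_1^+,c),(v_1^-,c)$. By the definition of $A'_{c^+}$, the only vertices incident to an outgoing $c^+$-arc are those of the form $(w_1^+,c)$; and the unique $c^+$-arc out of $(w_1^+,c)$ ends in $(w_1^+,c+1)$ or, when $c=|K|$, in $(w_1^-,1)$. Hence if $\varphi((v_1^+,1))=(u_1^+,1)$ for some $u\in V$, then following the sequence of colors $1^+,2^+,\dots,|K|^+,1^-,2^-,\dots,|K|^-$ that makes up $W_v$ and using that $\varphi$ must send this sequence to a walk of the same colors, we force $\varphi((v_1^+,c))=(u_1^+,c)$ and $\varphi((v_1^-,c))=(u_1^-,c)$ for every $c\in K$. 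So $\varphi$ maps $W_v$ bijectively onto $W_u$, and I define $\psi(v)=u$.

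Next I would pin down the image of the ``second layer'' vertices $(v_2^+,c)$ and $(v_2^-,c)$. The $0$-arc $((v_1^+,c),(v_2^+,c))$ forces $\varphi((v_2^+,c))$ to be the end of a $0$-arc out of $(\psi(v)_1^+,c)$, and the only candidate is $(\psi(v)_2^+,c)$; thus $\psi(v)_2^+\in V_2^+$, i.e.\ $\deg^+(\psi(v))\ge 1$, and $\varphi((v_2^+,c))=(\psi(v)_2^+,c)$. Symmetrically, $\varphi((v_2^-,c))=(\psi(v)_2^-,c)$. Together with the previous paragraph this gives $\varphi=\psi'$ on all of $V'$.

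It remains to check $\psi\in\End(D)$. For any $(u,v)\in A_c$ in $D$, the blow-up contains the $c$-arc $((u_2^+,c),(v_2^-,c))$. Applying $\varphi$ we obtain the $c$-arc $((\psi(u)_2^+,c),(\psi(v)_2^-,c))$ in $D'$, and by the definition of $A'_c$ this is possible only if $(\psi(u),\psi(v))\in A_c$. Thus $\psi$ preserves colored arcs and hence belongs to $\End(D)$. The main obstacle is the very first step, namely showing that the colored structure of $W_v$ is rigid enough that its image must again be a walk of the form $W_u$; everything afterwards is a direct ``unfolding'' of the construction of the blow-up.
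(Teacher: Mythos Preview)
Your proposal is correct and follows essentially the same approach as the paper: use that the only tail of a $1^+$-arc is a vertex of the form $(u_1^+,1)$ to pin down $\varphi((v_1^+,1))$, trace the forced colors along $W_v$ to define $\psi$, and then read off $\psi\in\End(D)$ from the $c$-arcs. If anything, you are more explicit than the paper in handling the second-layer vertices via the $0$-arcs (the paper collapses this into ``Note that $\psi'=\varphi$''); the only cosmetic fix would be to replace the conditional ``Hence if $\varphi((v_1^+,1))=(u_1^+,1)$'' by the assertion that this \emph{must} hold, which you have already justified in the preceding sentence.
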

\begin{proof} Let $D=(V,\{A_c\mid c\in K\})$ with blow-up $D'=(V',\{A'_c\mid c\in K'\})$. Let $u$ be any vertex of $D$. The image of $(u_1^+,1)$ has to be the starting point of a $1^+$-arc, so $\varphi((u_1^+,1))=(v_1^+,1)$ for some $v\in V$. This implies that $\varphi(W_u)=W_v$, which yields a mapping $\psi:V\rightarrow V$. Note that $\psi'=\varphi$. Now let $c\in K$ and consider any arc $(u,v)\in A_c$. By definition, $D'$ has the $c$-arc $((u_2^+,c),(v_2^-,c))$. Since $\varphi\in\End(D')$ and $\varphi=\psi'$, we can also say that $((\psi(u)_2^+,c),(\psi(v)_2^-,c))\in A'_c$. This implies that $(\psi(u),\psi(v))\in A_c$, so $\psi\in\End(D)$.
\end{proof}

\begin{prop}\label{prop:monoiso} Let $D$ be an arc-colored digraph with blow-up $D'$. The mapping $\Phi$ defined by $\Phi(\varphi)=\varphi'$ is a monoid isomorphism from $\End(D)$ to $\End(D')$.
\end{prop}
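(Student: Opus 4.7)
The plan is to combine the two preceding lemmas with a short check of injectivity and compatibility with the monoid structure. By Lemma~\ref{lem:Psi_well_defined}, the map $\Phi:\End(D)\to\End(D')$ defined by $\Phi(\varphi)=\varphi'$ is well-defined, and by Lemma~\ref{lem:Psi_surjective} it is surjective. So what remains to check is that $\Phi$ is injective, that it sends $\mathrm{id}_D$ to $\mathrm{id}_{D'}$, and that it satisfies $\Phi(\varphi\circ\psi)=\Phi(\varphi)\circ\Phi(\psi)$.

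For injectivity, I would use the fact that the restriction of $\varphi'$ to the subset $V_1^+\times\{1\}\subseteq V'$ completely determines $\varphi$. Indeed, if $\varphi_1'=\varphi_2'$, then for every $v\in V$ we have $(\varphi_1(v)_1^+,1)=\varphi_1'((v_1^+,1))=\varphi_2'((v_1^+,1))=(\varphi_2(v)_1^+,1)$, which forces $\varphi_1(v)=\varphi_2(v)$.

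The identity check is immediate from the defining formula $\varphi'((v_i^*,c))=(\varphi(v)_i^*,c)$: plugging in $\varphi=\mathrm{id}_D$ gives $\mathrm{id}_{V'}$, which is precisely $\mathrm{id}_{D'}$. For compatibility with composition, take $\varphi,\psi\in\End(D)$ and any $(v_i^*,c)\in V'$. On the one hand, $\Phi(\varphi\circ\psi)((v_i^*,c))=((\varphi\circ\psi)(v)_i^*,c)=(\varphi(\psi(v))_i^*,c)$. On the other hand,
\[
(\Phi(\varphi)\circ\Phi(\psi))((v_i^*,c))=\varphi'(\psi'((v_i^*,c)))=\varphi'((\psi(v)_i^*,c))=(\varphi(\psi(v))_i^*,c),
\]
so the two agree.

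There is no real obstacle here; the proposition is essentially a packaging result, since all the combinatorial content has already been established in Lemmas~\ref{lem:Psi_well_defined} and~\ref{lem:Psi_surjective}. The only subtlety to keep in mind is that when $i=2$ the symbol $(v_2^*,c)$ only makes sense under a degree hypothesis on $v$, but since $\varphi\in\End(D)$ preserves the existence of an outgoing (resp.\ ingoing) $c$-arc at $v$, the image $(\varphi(v)_2^*,c)$ is again a legitimate vertex of $D'$, so no ambiguity arises in the formulas above.
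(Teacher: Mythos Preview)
Your proof is correct and follows essentially the same approach as the paper: invoke Lemmas~\ref{lem:Psi_well_defined} and~\ref{lem:Psi_surjective} for well-definedness and surjectivity, then verify injectivity, the identity, and compatibility with composition directly from the formula $\varphi'((v_i^*,c))=(\varphi(v)_i^*,c)$. Your injectivity argument is slightly more explicit than the paper's (which simply asserts it is clear), and your remark about the $i=2$ case is a fair observation but not needed, as this was already handled when $\varphi'$ was defined.
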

\begin{proof} Let $D=(V,\{A_c\mid c\in K\})$ with blow-up $D'=(V',\{A'_c\mid c\in K'\})$. By Lemmas \ref{lem:Psi_well_defined} and \ref{lem:Psi_surjective}, $\Phi(\End(D))=\End(D')$. It is clear that $\Phi$ is injective and $\Phi(\textrm{id}_{D})=\textrm{id}_{D'}$. Now let $\varphi,\psi\in\End(D)$. Then $\Phi(\varphi\circ\psi)$ is defined by 
\[(\varphi\circ \psi)'((v_i^*,c))=((\varphi\circ \psi)(v)_i^*,c)\]
for every $v\in V$, $c\in K$, $*\in\{+,-\}$ and $i\in\{1,2\}$, and $\Phi(\varphi)\circ\Phi(\psi)$ is defined by
\[(\varphi'\circ \psi')((v_i^*,c))=\varphi'(\psi(v)_i^*,c))=(\varphi(\psi(v))_i^*,c)\]
for every $v\in V$, $c\in K$, $*\in\{+,-\}$ and $i\in\{1,2\}$.
\end{proof}

Lemma~\ref{lem:degreesloops} together with Proposition~\ref{prop:monoiso} give that $D'$ has maximum degree at most $\max(\Delta^{\pm}(D)+1,3)$, minimum in- and outdegree $1$, and endomorphism monoid isomorphic to $\End(D)$; hence we have established Lemma~\ref{lem:second_step}.

Let us give an easy application of the so far obtained results. A monoid $M$ is \emph{right $k$-cancellative} if $|\{x\in M\mid xy=z\}|\leq k$ for all $y,z\in M$, \emph{left $k$-cancellative} if $|\{y\in M\mid xy=z\}|\leq k$ for all $x,z\in M$, and \emph{$k$-cancellative} if it is left $k$-cancellative or right $k$-cancellative.
We can use the so far obtained results to show:
\begin{teo}\label{teo:groups}
For every $k$-cancellative monoid $M$ there is a simple undirected graph $G$ of maximum degree at most $\max(k+1,3)$, such that $M\cong\End(G)$. %In particular, graphs of maximum degree $3$ are $\End$-universal for groups {\color{red}[(see the discussion below)]}.
\end{teo}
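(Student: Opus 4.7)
The plan is to chain Lemma~\ref{lem:basic} with the two constructive reductions developed earlier in the paper. Given a $k$-cancellative monoid $M$, I will first build an arc-colored digraph $D$ with $\End(D)\cong M$ and colored semi-degree $\Delta^{\pm}(D)\leq k$. Once such a $D$ is in hand, Lemma~\ref{lem:second_step} produces a loopless arc-colored digraph $D'$ with $\End(D')\cong M$, $\delta^+(D')=\delta^-(D')=1$, and $\Delta(D')\leq\max(\Delta^{\pm}(D)+1,3)\leq\max(k+1,3)$. Since $D'$ satisfies the hypotheses of Lemma~\ref{lem:third_step}, applying it converts $D'$ into a simple undirected graph $G$ with $\End(G)\cong\End(D')\cong M$ and $\Delta(G)=\max(\Delta(D'),3)\leq\max(k+1,3)$, as required.

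The construction of $D$ is immediate when $M$ is right $k$-cancellative: pick any generating set $C\subseteq M$ (for instance $C=M$) and set $D={\Cay_{\mathrm{col}}}(M,C)$, so that $\End(D)\cong M$ by Lemma~\ref{lem:basic}. For every color $c\in C$, each vertex $m$ has the unique outgoing $c$-arc $(m,mc)$, giving $\Delta^+_c(D)=1$; meanwhile the $c$-colored in-neighbours of a vertex $z$ are precisely $\{m\in M\mid mc=z\}$, of which there are at most $k$ by right $k$-cancellativity (applied with $y=c$). Hence $\Delta^{\pm}(D)\leq k$.

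When $M$ is left $k$-cancellative, the situation is handled by an entirely dual construction, in which the roles of left and right multiplication are interchanged; the colored semi-degree of the resulting arc-colored digraph is bounded by $k$ via left $k$-cancellativity, and the rest of the pipeline is applied verbatim. The main point to highlight is the bookkeeping of the degree bound: cancellativity translates cleanly into $\Delta^{\pm}(D)\leq k$, and then the $+1$ contributed by the blow-up of Lemma~\ref{lem:second_step} together with the $\max(\cdot,3)$ from the \v{s}ip-product of Lemma~\ref{lem:third_step} combine to give exactly the stated $\max(k+1,3)$ rather than something larger.
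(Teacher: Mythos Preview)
Your proof is correct and follows essentially the same approach as the paper: build $D=\Cay_{\mathrm{col}}(M,C)$ (respectively the left Cayley graph in the left $k$-cancellative case), observe that cancellativity gives $\Delta^{\pm}(D)\leq k$, then apply Lemma~\ref{lem:second_step} followed by Lemma~\ref{lem:third_step}. Your justification of the bound $\Delta^{\pm}(D)\leq k$ is in fact more explicit than the paper's.
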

\begin{proof}
 Let $M$ be right $k$-cancellative. We take a generating set $C$ of $M$. By Lemma~\ref{lem:basic} we have $M\cong\End(\Cay_{\mathrm{col}}(M,C))$. Further, $D=\Cay_{\mathrm{col}}(M,C)$ has $\Delta^{\pm}(D)\leq k$. Thus, Lemma~\ref{lem:second_step} yields a loopfree blow-up say $D'$, which has maximum degree at most $\max(k+1,3)$, and minimum in- and outdegree $1$. Hence, we can apply Lemma~\ref{lem:third_step} to $D'$ and obtain a simple undirected graph $G$ of maximum degree at most $\max(k+1,3)$, such that $M\cong\End(G)$. If $M$ is left $k$-cancellative, we set $D$ to be the left-Cayley graph with respect to $C$ and apply the analogous version of Lemma~\ref{lem:basic} to follow the same proof.
\end{proof}

Since groups are $1$-cancellative Theorem~\ref{teo:groups} in particular yields that every group is the endomorphism monoid of a graph of maximum degree $3$. However, in~\cite{HN73} it is shown that every group is the endomorphism monoid of a $3$-regular graph, thus generalizing Frucht's result~\cite{Frucht49} and strengthening our result in this case. More generally, Babai and Pultr~\cite[Problem 2.3]{BP80} ask whether every monoid is the endomorphism monoid of a regular graph. {We gave a positive answer to that question, recently in~\cite{knauer2025rigidregulargraphsproblem}.}

\section{Lattices}

In this section we will focus on commutative idempotent monoids, i.e.~lattices. To prove the main results Theorems~\ref{teo:mainboundeddegree} and~\ref{teo:forcingaminor} we will first introduce some notions of posets and lattices that are common to both proofs. 

Let $(P,\leq)$ be a poset. 
If $x,y$ are incomparable we sometimes denote this as $x \parallel_P y$. 
For elements $x,y\in P$ we say that $y$ \emph{covers} $x$, if $x\neq y$, $x\leq y$,  and $x\leq z\leq y$ implies $z\in\{x,y\}$ for all $z\in P$. We denote this relation by $x\prec y$. The \emph{cover graph} $G_P$ of $P$ has vertex set $P$ and an edge $\{x,y\}$ if $x\prec y$. A common way to represent $G_P$, is by drawing it in the plane such that if $x\prec y$, then the edge goes upwards from $x$ to $y$. This drawing is usually called the \emph{Hasse diagram} of $P$, see Figures~\ref{fig:lattice} and~\ref{fig:thick} for examples.
A \emph{chain} of $P$ is a subset $Q\subseteq P$ which is totally ordered respect to $\leq$. An \emph{ideal} in a poset $P$ is a set $I\subseteq P$ with the property that $x\leq y\in I$ implies $x\in I$. An ideal $I$ is called \emph{principal} if there is a $y\in P$ such that $I=\{x\in P\mid x\leq y\}$. In this case we denote $I$ as $\downarrow_P y$. A  \emph{linear extension} of $\leq$ is a total order $\leq^*$ on the same set $P$ such that $x\leq y$ implies $x\leq^* y$.

A \emph{lattice} $L$ is a partially ordered set in which for every two elements $x,y\in L$ there is a unique largest element $z\leq x,y$, called \emph{meet} and denoted $x\wedge y$, and a unique smallest element $z\geq x,y$, called \emph{join} and denoted $x\vee y$. Every (finite) lattice has a unique maximum $\hat{1}$, and a unique minimum $\hat{0}$.  We consider a lattice $L$ as the monoid $(L,\wedge)$. This monoid is commutative and idempotent, and it is well-known that any commutative and idempotent monoid can be seen as lattice this way. We will adapt this view, because the order structure as well as the join-operation will turn out useful. Moreover, in Section~\ref{subsec:forcingaminor} we will recur to Birkhoff's Fundamental Theorem for Finite Distributive Lattices~\cite{B37}. Since join and meet are commutative and associative, for any $I=\{\ell_1, \ldots , \ell_k\}\subseteq L$, we will denote $\bigvee I:=\ell_1\vee \ldots \vee \ell_k$ and $\bigwedge I:=\ell_1\wedge \ldots \wedge \ell_k$. By convention $\bigvee\varnothing=\hat{0}$ and $\bigwedge\varnothing=\hat{1}$. %{\color{blue}[Maybe we should choose between $\varnothing$ and $\varnothing$ (right now we are using them both)]}

We will now, introduce some definitions and a lemma, that are used formally only in Section~\ref{subsec:forcingaminor}, but serve to give an intuition for the constructions in Section~\ref{subsec:boundeddegree}. 
Given an (arc-coloured, directed) graph $D$, an endomorphism $\varphi\in\End(D)$ is a \emph{retraction} if its restriction to its image is the identity, i.e., $\varphi_{|\varphi(D)}=\mathrm{id}_{\varphi(D)}$. An (arc-coloured, directed) subgraph of $D$ that is induced by the image of a retraction is called a \emph{retract}. %It is easy to see that the image of {\color{blue}[by/under?]} 
%a retraction of a retract of $D$ is also a retract of $D$ {\color{blue}[I don't succeed proving this. Also, can't you have 2 idempotents in a (non-commutative) semigroup, the product of which not idempotent? Maybe I'm missing something]}. 
We collect the retracts of $D$ in the set $\mathcal{R}_D$. 

\begin{lema}\label{lem:End_is_lattice}
 Let $L$ be a lattice and $D$ an (arc-colored, directed) graph such that $\End(D)\cong L$. Then  $(L,\leq)\cong (\mathcal{R}_D,\subseteq)$. In particular, $\wedge$ corresponds to intersection of retracts. 
\end{lema}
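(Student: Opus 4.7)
The plan is to define the candidate map $F\colon L\to \mathcal{R}_D$ by $F(\ell)=R_\ell:=D[\mathrm{Im}(\varphi_\ell)]$, where $\varphi_\ell\in\End(D)$ is the image of $\ell$ under a fixed monoid isomorphism $\Psi\colon(L,\wedge,\hat 1)\to(\End(D),\circ,\mathrm{id}_D)$, and to verify that $F$ is an order-isomorphism taking $\wedge$ to $\cap$. The key starting observation is that since every element of $L$ is idempotent, every $\varphi_\ell$ satisfies $\varphi_\ell\circ\varphi_\ell=\varphi_\ell$; hence $\varphi_\ell$ fixes every vertex of its image pointwise, so it is a retraction and $R_\ell\in\mathcal{R}_D$. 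Conversely, any retract is the image of some retraction, which in turn equals $\varphi_\ell$ for $\ell=\Psi^{-1}(\varphi)$, so $F$ is surjective.

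For the order correspondence I would exploit that $\ell_1\leq\ell_2$ is equivalent to $\ell_1\wedge\ell_2=\ell_1$, which under $\Psi$ (and using commutativity of $L$) reads $\varphi_{\ell_2}\circ\varphi_{\ell_1}=\varphi_{\ell_1}$. Applying both sides to an arbitrary vertex gives $\varphi_{\ell_1}(v)\in\mathrm{Im}(\varphi_{\ell_2})$, hence $V(R_{\ell_1})\subseteq V(R_{\ell_2})$. Conversely, the same containment of images forces $\varphi_{\ell_2}$ to fix $\varphi_{\ell_1}(v)$ for every $v$ (because $\varphi_{\ell_2}$ is a retraction), and unwinding yields $\ell_1\leq\ell_2$. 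Since retracts are induced subgraphs of $D$, the inclusion of vertex sets is equivalent to the inclusion of retracts, and antisymmetry then gives injectivity of $F$, completing the poset isomorphism.

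For the ``in particular'' clause it suffices to check $V(R_{\ell_1\wedge\ell_2})=V(R_{\ell_1})\cap V(R_{\ell_2})$: the inclusion $\subseteq$ follows from $\varphi_{\ell_1\wedge\ell_2}=\varphi_{\ell_1}\circ\varphi_{\ell_2}=\varphi_{\ell_2}\circ\varphi_{\ell_1}$ by commutativity, while $\supseteq$ follows because any vertex lying in both $\mathrm{Im}(\varphi_{\ell_1})$ and $\mathrm{Im}(\varphi_{\ell_2})$ is fixed by both retractions and hence also by their composition. Since retracts are induced subgraphs, this vertex-set equality promotes to $R_{\ell_1\wedge\ell_2}=R_{\ell_1}\cap R_{\ell_2}$, as claimed.

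The main obstacle is the two-way translation between the algebraic order on $L$ and the set-theoretic inclusion of image vertex sets in $D$. This step genuinely uses both ingredients of a lattice viewed as a monoid: idempotency, which makes each $\varphi_\ell$ a retraction and lets us substitute ``fixed by $\varphi_{\ell}$'' for ``lies in $\mathrm{Im}(\varphi_\ell)$'', and commutativity, which allows us to reorder the composition $\varphi_{\ell_1}\circ\varphi_{\ell_2}$ so that either factor can play the ``outer'' role as needed. Dropping either property would break one direction of the equivalence, so both must be invoked carefully throughout.
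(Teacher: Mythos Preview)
Your proposal is correct and follows essentially the same approach as the paper's proof: both use idempotency to conclude every endomorphism is a retraction, and commutativity to match the lattice order with inclusion of images. The only organizational difference is that the paper establishes injectivity first by directly showing that two retractions with the same image must be equal (via $\varphi'(v)=\varphi(\varphi'(v))=\varphi'(\varphi(v))=\varphi(v)$), whereas you deduce injectivity from the order correspondence and antisymmetry; you also spell out the ``in particular'' clause about $\wedge$ corresponding to intersection, which the paper leaves implicit.
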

\begin{proof}
 Since $L$ is idempotent, we have $\varphi(\varphi)=\varphi$ for all $\varphi\in \End(D)$. But this just means $\varphi_{|\varphi(D)}=\mathrm{id}_{\varphi(D)}$, i.e., all endomorphisms are retractions. Suppose now that there were two retractions $\varphi, \varphi'$ with the same image $R$. Since $L$ is commutative $\varphi'(v)=\varphi(\varphi'(v))=\varphi'(\varphi(v))=\varphi(v)$ for any vertex $v$. Thus, $\varphi=\varphi'$. We have established a bijection between $L$ and $\mathcal{R}_D$.
 
 Now $\varphi\leq_L \varphi'$ is equivalent to $\varphi\wedge \varphi'=\varphi$ which means $\varphi(\varphi')=\varphi$. It is straight-forward to check that this is equivalent to $\varphi(D)\subseteq \varphi'(D)$. We have shown $(L,\leq)\cong (\mathcal{R}_D,\subseteq)$.
\end{proof}

%\comment{right now for elements of posets we use $p,q$ and for lattices $\ell_i$ as well as $w,x,y,z$}

%\begin{nota} Let $q$ be a chain of a (finite) poset and let $1\leq i\leq |q|$. Denote by $q_i$ the $i$-th least element of $q$, and by $q^i$ the $i$-th greatest element of $q$.\comment{do we ever need the greates element?} 
%\end{nota}

%In the case of lattices, the whole order structure can be recovered from the graph.

%\comment{this should be a consequence of the more general lemma that $D=\Cay_{\textrm{col}}(M,C)$ for a generating set $C$ of a monoid $M$}
%\begin{remark}\label{rem:end-mult} Let $L$ be a lattice and $D=\Cay_{\textrm{col}}(L,L)$. The elements of $\End(D)$ are exactly the mappings $L\rightarrow L$ of the form $\mu_x$ with $x\in L$, that is, meeting against $x$.
%\end{remark}

%{\color{gray}
%\begin{lemma} Let $L$ be a lattice and let $x,y\in L$. Then $x\geq y$ if and only if $\exists \varphi\in\End(\Cay_{\textrm{col}}(L,L))\ \varphi(x)=y$.\comment{are we using this? there is no reference} {\color{blue}[I think we don't, but I'll have to check]}
%\end{lemma}
%\begin{proof} Suppose that $x\geq y$. Then left-multiplication  $\mu_y(x)=x\wedge y=y$ by definition, so by Lemma~\ref{lem:basic} we are done. 
%Conversely, suppose that $y=\varphi(x)$ for some $\varphi\in\End(\Cay_{\textrm{col}}(L,L))$. By Lemma~\ref{lem:basic}, there is some $z\in L$ such that $\varphi(x)=\mu_z(x)=x\wedge z$, so $x\geq y$.
%\end{proof}
%}

\subsection{Representing lattices with maximum degree three}\label{subsec:boundeddegree}
The main result of this section is that every lattice is the endomorphism monoid of a graph of maximum degree $3$ (Theorem~\ref{teo:mainboundeddegree}). Thus, we will provide the last missing ingredient to its proof:

\begin{lemma}\label{lem:first_step} 
For every lattice $L$ there exists an arc-colored digraph $D$ such that $\End(D)\cong L$ and $\Delta^{\pm}(D)\leq 2$.
\end{lemma}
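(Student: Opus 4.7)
My plan is to modify the Cayley graph $D_0=\Cay_{\mathrm{col}}(L,L)$, which by Lemma~\ref{lem:basic} already has $\End(D_0)\cong L$. In $D_0$ the per-color out-degree is automatically $1$, since the $c$-arcs encode the function $m\mapsto m\wedge c$; hence $\Delta^+_c(D_0)=1\leq 2$ for every $c\in L$. The only obstruction is that the per-color in-degree $\deg^-_c(m')=|\{m\in L:m\wedge c=m'\}|$ at an element $m'\in\,\downarrow_L c$ can be arbitrarily large, and I need to bring it down to $2$ without disturbing the endomorphism monoid.

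To reduce the in-degrees, I plan to fix once and for all a linear extension $\leq^\ast$ of $(L,\leq)$, and for each pair $(c,m')$ with $m'\in\,\downarrow_L c$ to replace the bundle of direct $c$-arcs into $m'$ by a routing gadget built on auxiliary vertices. The elements of the fiber $F_{c,m'}=\{m\in L:m\wedge c=m'\}$ are listed in $\leq^\ast$-order and their $c$-arcs are ``merged'' through a chain (or balanced binary tree) of new vertices, arranged so that every vertex in the gadget has $c$-in-degree at most $2$ and $c$-out-degree $1$, and so that $m'$ itself has $c$-in-degree at most $2$. In addition I will equip each auxiliary vertex with extra dedicated colors that identify its role (distinguishing it from lattice vertices and marking its position within the gadget), to enforce the rigidity needed in the analysis of endomorphisms.

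With such a $D$, I would then imitate the strategy of Proposition~\ref{prop:monoiso}. Define $\Phi:L\to\End(D)$ by sending $\ell$ to the map $\varphi_\ell$ that acts on lattice vertices as $\wedge\ell$, extended canonically to the auxiliary vertices of the $(c,m')$-gadget by mapping them into the $(c,m'\wedge\ell)$-gadget at the position prescribed by $\leq^\ast$. The key structural fact enabling this extension is that $\wedge\ell$ is order-preserving, hence sends $F_{c,m'}$ into $F_{c,m'\wedge\ell}$ compatibly with their $\leq^\ast$-orders up to coincidences. That $\Phi$ is an injective monoid homomorphism is immediate from $\varphi_\ell(\hat 1)=\ell$ and associativity of $\wedge$; surjectivity will rest on the rigid coloring, which forces any $\psi\in\End(D)$ to map the set of lattice vertices to itself, whence its restriction lies in $\End(D_0)\cong L$, and to act on each gadget in the unique canonical way.

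The main obstacle I expect is that distinct fibers $F_{c,m'}$ and $F_{c,m'\wedge\ell}$ generally have different sizes, so the $(c,m')$- and $(c,m'\wedge\ell)$-gadgets are not isomorphic and the extension of $\varphi_\ell$ to auxiliary vertices is necessarily a collapsing rather than a bijection. Choosing the shape of the gadgets together with the rigidifying colors so that this collapsing is simultaneously well-defined for every $\ell\in L$, yet strict enough to rule out spurious endomorphisms, is the technical heart of the proof; once this is set up correctly, the verification along the lines of Lemmas~\ref{lem:Psi_well_defined} and~\ref{lem:Psi_surjective} should be routine.
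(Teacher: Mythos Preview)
Your high-level plan is in the same spirit as the paper's---insert auxiliary vertices into $\Cay_{\mathrm{col}}(L,L)$ to spread out the large per-color in-degrees---but the ``key structural fact'' you invoke is false, and with it the proposed gadget collapses. The map $m\mapsto m\wedge\ell$ preserves the lattice order $\leq$, not the linear extension $\leq^*$; on $\leq$-incomparable elements of a fiber it can reverse their $\leq^*$-order outright (not merely ``up to coincidences''), and then no vertex of your chain gadget can serve as the image. Concretely, take $L=M_3=\{\hat 0,a,b,c,\hat 1\}$ with $a,b,c$ an antichain and $\hat 0<^*a<^*b<^*c<^*\hat 1$. For the color $a$ the fiber over $\hat 0$ is $\{\hat 0,b,c\}$; your chain gadget has auxiliary vertices $v_1$ (with $a$-in-arcs from $\hat 0,b$) and $v_2$ (with $a$-in-arcs from $v_1,c$), so the unique $a$-out-neighbour of $\hat 0$ is $v_1$ and of $v_1$ is $v_2$. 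Under $\varphi_b=\wedge\,b$ one has $c\mapsto\hat 0$ while $\hat 0,b$ are fixed; the arcs out of $\hat 0$ and $b$ force $\varphi_b(v_1)=v_1$, and then $\varphi_b(v_2)$ would have to be simultaneously the $a$-out-neighbour of $v_1$ (namely $v_2$) and of $\hat 0$ (namely $v_1$). One checks that no reordering of the three inputs along a chain or small tree survives both $\varphi_b$ and $\varphi_c$, and extra rigidifying colors only add constraints, so they cannot rescue the extension.

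What makes the paper's construction work is precisely that its auxiliary vertices are not positions in a linearly ordered merge but are indexed by chains $Q$ of the extended poset $L^+$, with the intended endomorphism $\varphi_w$ acting by $Q\mapsto Q\cap\downarrow_{L^+}w$. This operation is coherent for all $w$ at once because it is itself a lattice-theoretic (intersection) operation rather than a positional one; the seven color families $K_{\mathbf s},K_{\mathbf r},K_{\mathbf c},K_{\mathbf{c'}},K_{\mathbf h},K_{\mathbf i},K_{\mathbf j}$ then pin down the remaining freedom. So the ``technical heart'' you defer is not routine bookkeeping on top of your scheme: it requires abandoning the per-fiber $\leq^*$-chain idea in favour of a global, lattice-indexed one.
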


Thus, throughout the section $L$ is a lattice with $|L|=n$ and the aim is to construct a digraph with $K$-colored arcs $D=(V,\{A_c\mid c\in K\})$ such that $\End(D)\cong L$ and $\Delta^{\pm}(D)\leq 2$.

\subsubsection{Construction of $D$}
%{\color{violet}[a comment in the chat to recall]: maybe the familiy of proofs of the remarks. they all have this sentence "from the construction of D it is deduced that". maybe you can do it once a bit more precisely, and then in the other instances just say sth like, "going along Definiton 5.3 as in the proof of Remark 5.4 we get that" anc then continue with what you have already written there }

We will regard $D$ as the superposition of seven arc-colored digraphs on the same set of vertices $V$. They will be constructed separately. The set of colors $K$ will be the disjoint union of their respective sets of colors, which we call $K_{\mathbf s}$, $K_{\mathbf r}$, $K_{\mathbf c}$, $K_{\mathbf{c'}}$, $K_{\mathbf{h}}$, $K_{\mathbf{i}}$ and $K_{\mathbf j}$. 
 % 
% The notation stands for \emph{structure}, \emph{regulation}, \emph{coordination}, \emph{help}, \emph{inheritance} and \emph{join}. These words informally summarize the function of each class of colors.

\paragraph{The vertex set $V$:}

% \comment{some of these definitions will be used to define the other colors as well some others not, maybe those of the first tyoa can go in the beginning of the section, also here you must clarifyu that you define all the vertices because these will be used in the following sections too I guess.}
For the definition we fix an arbitrary linear extension
    $\leq^*$ of the order $\leq$ on $L$.
    Given any subset $S\subseteq L$ we denote by $S_i$ its $i$-th least element with respect to $\leq^*$ and by $S^i$ its $i$-th largest element with respect to $\leq^*$. In particular $L_1, \ldots, L_n$ is the set of all elements of $L$.% {\color{blue}[I realized this can be polysemic: when $S=L$, we use $L\times L$ to denote the set of ordered pairs of elements of $L$ (for example in the definition of c,c'',i,j-arcs)]}. %This will be used mainly with principal ideals of $(L,\leq)$, that is, with subsets of the form $\downarrow_L x=\{y\in L\mid y\leq x\}$, where $x\in L$.

%{\color{gray}$r_x$ the number of elements of $\downarrow x=\{y\in L\mid y\leq x\}$, and by $x^{(i)}$ the $i$-th element of this set respect to $\leq^*$.} \comment{maybe it is ok to just do $(\downarrow x)_i$ and also for the chains use the same notation like $q_i$, since both should coincide and instead of $r_x$ maybe just $|\downarrow x|$...}
We extend the partial order $(L,\leq)$ to a set $L^+=L\cup\{0'\}$ %\comment{maybe something like $L^+$ or $\overline{L}$ would be nice since it keeps the relation to $L$ in its name}
, where $0'<x$ for every $x\in L\setminus\{\hat{0}\}$ and $0'\parallel \hat{0}$ are incomparable. Denote by $\mathcal Q_{L^+}$ the set of chains of $(L^+,\leq)$, and by $\mathcal Q'_{L^+}=\{Q\in\mathcal Q_{L^+}\mid Q\cap\{\hat{0},0'\}\neq\varnothing,\ Q\neq\{0'\}\}\subseteq\mathcal Q_{L^+}$ the set of chains containing a minimal element of $L^+$, except 
the single-element chain $\{0'\}$. For any $Q\in\mathcal Q'_{L^+}$ and any $x\in L$, define a new chain $[Q]_x\in\mathcal Q'_{L^+}$ as 
\[[Q]_x=\begin{cases}\{\hat{0}\} &\text{if } |Q|=2 \text{ and } ((Q_1=\hat{0} \text{ and } x<^*Q_2) \text{ or } ( Q_1=0' \text{ and } x\geq^*Q_2))\\ Q &\text{otherwise.}\end{cases}\]
Note that $[Q]_x=Q$ most of the time except for some chains of length $2$.  The vertex set of $D$ is defined as $V=\{([Q]_x,x)\mid Q\in\mathcal Q'_{L^+},\ x\in L\}\subseteq\mathcal Q'_{L^+}\times L$.

\paragraph{The $K_{\mathbf{s}}$-arcs:}
In the current subsection we define the $K_{\mathbf s}$-arcs of $D$. These are the arcs that sort of build the structure of $\End(D)$ from the point of view of Lemma~\ref{lem:End_is_lattice}. Indeed, after a sequence of definitions, we will get to the notion of \emph{petal} $W_{\{x\}}$---which is a walk in $D[K_{\mathbf s}]$ associated to $x\in L$, see Lemma~\ref{lem:petals}. The idea of the construction of $D[K_{\mathbf s}]$ is to guarantee that every $x\in L$ corresponds to a retraction mapping everything to $\bigcup_{y\in\downarrow_L x} W_{\{y\}}$, i.e., the union of petals of elements below $x$.  The colored arcs that will be introduced in the other subsections serve  to restrict the endomorphisms of $D$ such that apart from these no other endomorphisms are possible.

%{\color{gray}(the notation stands for "structure" \comment{the names of the colors are a bit too general in order to help the reader, either explain further or remove}).} 
To every $x\in L$ we (bijectively) associate a color $\mathbf s(x)\in K_{\mathbf s}$.%, in an arbitray bijective manner. 

%\begin{nota}

%\comment{the following definition is really hard ot understand, hwta its sense is...maybe there can be a sketch or some sentence explaning what it is about}
%\comment{right now it seems that for every $q\in Q'_P$ we define a chain $W_q$ and then we define $G[K_S]$ as union of these, maybe say it like this and then also explain why the definition is then extended to chains not containing $0,0'$...don't these also form part of $G[K_S]$? So maybe one could first define all these chains and then $G{K_S}$}

%If $x,y\in L$ satisfy that $x<^* y$ and that $z\in\{x,y\}$ for any $z\in L$ with $x\leq^* z\leq^*y$ we write $x\prec^*y$.

\begin{defi}\label{def:Ks} For any $x\in L$, define
\[A_{\mathbf{s}(x)}=\begin{cases}\{(([Q]_x,x),([Q]_y,y))\mid Q\in\mathcal Q'_{L^+},\ x\prec^*y\} &\text{if } x\neq \hat{1} \\ \{(([Q]_{\hat{1}},\hat{1}),(\left[\tilde Q\right]_{\hat{0}},\hat{0}))\mid Q\in\mathcal Q'_{L^+}\} &\text{if } x=\hat{1},\end{cases}\]
where for any $Q\in\mathcal Q'_{L^+}$ the chain $\tilde Q\in\mathcal Q'_{L^+}$ is defined as follows:
\[\tilde Q=\begin{cases}
Q &\text{if } Q=\{\hat{0}\} \\
Q\cup\{(\downarrow_L Q_2)_2\} &\text{if } Q_1=\hat{0} \text{, } |Q|\geq 2 \text{ and } |\downarrow_L Q_2|\geq 3 \\
Q\cup\{0'\}\setminus\{\hat{0}\} &\text{if }  Q_1=\hat{0}\text{, } |Q|\geq 2 \text{ and } |\downarrow_L Q_2|=2 \\
\{\hat{0}\} &\text{if } Q_1=0'  \text{ and } |Q|=2 \\
Q\cup\{\hat{0},(\downarrow_L Q_3)_{i+1}\}\setminus\{0',Q_2\} &\text{if } Q_1=0' \text{, } |Q|\geq 3  \text{ and } |\downarrow_L Q_3|\geq i+2 \\
Q\setminus\{Q_2\} &\text{if } Q_1=0' \text{, } |Q|\geq 3 \text{ and } |\downarrow_L Q_3|=i+1,
\end{cases}\]
where $i$ is the positive integer such that $Q_2=(\downarrow_L Q_3)_i$, i.e., $Q_2$ is the $i$-th least element in $\downarrow_L Q_3$, and $(\downarrow_L Q_3)_{i+1}$ is the next element in $\downarrow_L Q_3$. 
\end{defi}

Since endomorphisms map walks to walks, in order to control the endomorphism monoid of $D$, we will specify a particular collection of walks. Let us do so, starting with a list of important properties derived from Definition~\ref{def:Ks}:
\begin{itemize}
    \item For every chain $Q\in\mathcal Q'_{L^+}$, we have $\left[\tilde Q\right]_{\hat{0}}=\tilde Q$.
    \item For every chain $Q\in\mathcal Q'_{L^+}$ the subgraph of $D[K_{\mathbf s}]$ induced by $\{([Q]_x,x)\mid x\in L\}\cup\{(\tilde Q,\hat{0})\}\subseteq V\subseteq \mathcal Q'_{L^+}\times L$ is a directed walk $W_Q$ of length $n$. Its vertices are ordered by increasing $x$ with respect to $\leq^*$, i.e., 
    $$([Q]_{\hat{0}},\hat{0})=([Q]_{L_1},L_1), ([Q]_{L_2},L_2), \ldots, ([Q]_{L_{n-1}},L_{n-1}),([Q]_{\hat{1}},\hat{1}), (\tilde Q,\hat{0}).$$ It has successive arcs of colors $\mathbf s(\hat{0})=\mathbf s(L_1),\mathbf s(L_2),...,\mathbf s(L_{n})=\mathbf s(\hat{1})$.
    \item The walks of the form $W_Q$ cover all vertices and arcs of $D[K_{\mathbf s}]$. 
    \item The last vertex $(\tilde Q,\hat{0})$ of $W_Q$ coincides with the first vertex of $W_{\tilde Q}$. In particular, $W_{\{\hat{0}\}}$ is closed (see Figure~\ref{fig:base_cycle3}) and
    $W_Q$ and $W_{\tilde Q}$ can be concatenated.
\end{itemize}

\begin{figure}[h]
\centering
\includegraphics[scale=1]{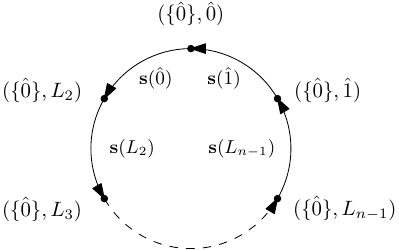}
\caption{$W_{\{\hat{0}\}}$.}\label{fig:base_cycle3}
\end{figure}

%In other words, these walks are the basic pieces that form $D[K_{\mathbf s}]$. We combine them into bigger pieces for later use. For that reason we extend the previous definition. 

We will use the last property to extend the definition of these walks to general nonempty chains $Q\in L^+$ with $\hat{0},0'\notin Q$. % {\color{blue}[I changed a bit the way of saying it (old sentence in \%)]}. 
%We will use the last property to extend the definition of these walks to a general nonempty chain $Q$ of $L^+$ such that $\hat{0},0'\notin Q$
The recursive nature of these walks can be observed in Figure~\ref{fig:x_cycle3}.

\begin{lemma}\label{lem:petals}
Let $Q$ be a non-empty chain of $L^+$ such that $\hat{0},0'\notin Q$. The concatenation of walks\[W_Q=W_{Q\cup\{(\downarrow_L Q_1)_1\}}W_{Q\cup\{(\downarrow_L Q_1)_2\}}...W_{Q\cup\{(\downarrow_L Q_1)_{|\downarrow_L Q_1|-1}\}}W_{Q\cup\{0'\}}\]  is well-defined, i.e., is a walk in $D[K_{\mathbf{s}}]$. If $Q=\{x\}$ for some $x\in L$, then $W_{\{x\}}$ is a closed walk called the \emph{petal} of $x$. %\comment{DUNNO IF THIS IS USEFEUL: If $x\neq \hat{0}$, then the \emph{proper part} of $W_{\{x\}}$, i.e., where it is disjoint from all other petals, spans from $(\{\hat{0}\}, L_{i-1})$ to $(\{\hat{0}\}, x)$, if $L_{i}=x$, i.e., $L_{i-1}$ is the predecessor of $x$ with respect to $\leq^*$.}{\color{blue}[I'm not 100\% sure about placing it here, but I think this helps understand the construction. Defining "proper part" might be useful for some new additions (for example, the comment in remark \ref{rem:A_r-arcs})]}
\end{lemma}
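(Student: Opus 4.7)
The plan is induction on $|\downarrow_L Q_1|$, where $Q_1$ is the $\leq^*$-minimum of $Q$. The base case is when $Q$ already contains $\hat 0$ or $0'$, so that $W_Q$ is provided by Definition~\ref{def:Ks}. Termination of the recursive formula holds because, for $i\geq 2$, the chain $Q_i':=Q\cup\{(\downarrow_L Q_1)_i\}$ has $\leq^*$-minimum $(\downarrow_L Q_1)_i$, whose principal ideal is strictly smaller than $\downarrow_L Q_1$, while $W_{Q\cup\{\hat 0\}}$ and $W_{Q\cup\{0'\}}$ are base-case walks.

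For well-definedness, at each junction one must check that the last vertex of the left walk agrees with the first vertex of the right walk. Every such vertex has second coordinate $\hat 0$, so the verification reduces to comparing chains: the last vertex of a base walk $W_P$ is determined by $\tilde P$, and the first by $[P]_{\hat 0}$. I would split the junctions into three kinds and invoke the corresponding case of $\tilde P$ each time: (i) the junction $W_{Q\cup\{\hat 0\}}\to W_{Q_2'}$ uses the case $P_1=\hat 0$, $|\downarrow_L P_2|\geq 3$ of $\tilde P$, producing $Q_2'\cup\{\hat 0\}$; (ii) for $2\leq i\leq |\downarrow_L Q_1|-2$, the junction $W_{Q_i'}\to W_{Q_{i+1}'}$ reduces by the recursion on the left to the base walk $W_{Q_i'\cup\{0'\}}$ and uses the case $P_1=0'$, $|\downarrow_L P_3|\geq i+2$, producing $Q_{i+1}'\cup\{\hat 0\}$; (iii) the final junction $W_{Q_{k}'}\to W_{Q\cup\{0'\}}$ with $k=|\downarrow_L Q_1|-1$ uses the case $|\downarrow_L P_3|=i+1$, giving $Q\cup\{0'\}$. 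The first vertex of the right walk is, in each case, read off from the ``otherwise'' branch of $[\cdot]_{\hat 0}$, which applies because the chains involved have at least three elements (or two, with $\hat 0<^* Q_1$, so the degenerate sub-case does not trigger). The degenerate instance $|\downarrow_L Q_1|=2$, where there is a single junction $W_{Q\cup\{\hat 0\}}\to W_{Q\cup\{0'\}}$, is handled by the case $P_1=\hat 0$, $|\downarrow_L P_2|=2$, giving $\tilde{Q\cup\{\hat 0\}}=Q\cup\{0'\}$.

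For the closedness of the petal $W_{\{x\}}$ with $x\neq\hat 0$, it suffices to check that the first vertex of $W_{\{\hat 0,x\}}$ and the last vertex of $W_{\{0',x\}}$ are both $(\{\hat 0\},\hat 0)$: the former because $[\{\hat 0,x\}]_{\hat 0}=\{\hat 0\}$ (here $|P|=2$, $P_1=\hat 0$, and $\hat 0<^*x$), the latter because $\tilde{\{0',x\}}=\{\hat 0\}$ by the case $P_1=0'$, $|P|=2$ of $\tilde P$. The main obstacle is just the bookkeeping: aligning the six cases of $\tilde P$ and the two cases of $[P]_\bullet$ from Definition~\ref{def:Ks} with the specific junctions and recursive unfoldings, so that each verification reduces to a direct application of a single case.
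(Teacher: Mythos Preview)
Your proposal is correct and follows essentially the same approach as the paper: both reduce the junction checks to computing $\tilde P$ for the terminal base walk on the left and $[P']_{\hat 0}$ for the initial base walk on the right, with the same case split according to $i=1$, $2\leq i\leq |\downarrow_L Q_1|-2$, $i=|\downarrow_L Q_1|-1$, and the degenerate $|\downarrow_L Q_1|=2$. You are slightly more explicit than the paper in framing the argument as an induction on $|\downarrow_L Q_1|$ (so that the last vertex of the recursively defined $W_{Q_i'}$ is identified with that of the base walk $W_{Q_i'\cup\{0'\}}$), but this is a presentational rather than a mathematical difference.
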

\begin{proof}
In order to check that the concatenation is well-defined, we use that the chains $ Q\cup\{(\downarrow_L Q_1)_1\}=Q\cup\{\hat 0\}$ as well $ Q\cup\{0'\}$ from the concatenations are in $\mathcal Q'_{L^+}$. Thus, for any such chain $Q'$
we have that the last vertex $(\tilde Q',\hat{0})$ of $W_{Q'}$ coincides with the first vertex of $W_{\tilde Q'}$. More generally, for $1\leq i\leq |\downarrow_L Q_1|-1$ the last vertex of $W_{Q\cup\{(\downarrow_L Q_1)_i\}}$ is
\[\left.\begin{cases}
(\reallywidetilde{Q\cup\{\hat{0}\}},\hat{0}) & \text{if } i=1 \\
(\reallywidetilde{Q\cup\{0',(\downarrow_L Q_1)_i\}},\hat 0) & \text{if } 2\leq i\leq |\downarrow_L Q_1|-1
\end{cases}\right\}=\]
\[\begin{cases}
(Q\cup\{\hat{0},(\downarrow_L Q_1)_2\},\hat{0}) & \text{if } i=1 \text{ and } |\downarrow_L Q_1|\geq 3 \\
(Q\cup\{0'\},\hat{0}) & \text{if } i=1 \text{ and } |\downarrow_L Q_1|=2 \\
(Q\cup\{\hat{0},(\downarrow_L Q_1)_{i+1}\},\hat{0}) & \text{if } i\geq 2 \text{ and } |\downarrow_L Q_1|\geq i+2 \\
(Q\cup\{0'\},\hat{0}) & \text{if } i\geq 2 \text{ and } |\downarrow_L Q_1|=i+1, \\
\end{cases}\]
coinciding with the first vertex of $W_{Q\cup\{(\downarrow_L Q_1)_{i+1}\}}$ when $1\leq i\leq |\downarrow_L Q_1|-2$, and with the first vertex of $W_{Q\cup\{0'\}}$ when $i=|\downarrow_L Q_1|-1$. 

Finally, note that if $|Q|=1$, then $W_Q$ is a closed walk, since its starting vertex $(\{\hat{0}\},\hat{0})$ is also its last vertex. 
\end{proof}

%\begin{figure}[h]
%\centering
%\includegraphics[scale=1]{x_cycle7}
%\caption{\comment{it is not so clear which parts in the figure correspond to what walks, I guess we can visualize a bit how some walks overlap, or are concatenatinos of others.} {\color{blue}[below I added two new versions of this figure]} Illustration of $W_{\{0\}}$ and $W_{\{x\}}$ for a given $x>0$. They appear in different colors and slightly shifted in order to make it easier to distinguish them. Here $i$ is the integer such that $(\downarrow_L 1)_{i}=x$.\comment{in the drawing use maybe two differetne colors ()and opacity or dran them sligthly next to each other, so the opverlapping is visible}}\label{fig:x_cycle3}
%\end{figure}
%\begin{figure}[h]
%\centering
%\includegraphics[scale=1]{x_cycle8}
%\caption{Illustration of $W_{\{x\}}$ for a given $x>0$. For clarity, walks composing $W_{\{x\}}$ appear in different colors (black/red). The squared vertex is $(\{0\},0)$, which is the starting point of $W_{\{0,x\}}$ and the ending point of $W_{\{0',x\}}$. Here $i$ is the integer such that $(\downarrow_L 1)_{i}=x$. At the background, $W_{\{0\}}$ in grey. It is drawn slightly shifted in order to make it easier to distinguish it from $W_{\{x\}}$, but their overlapping is still appreciable. Note that, for $1\leq j\leq |\downarrow_L x|-1$, the walks $W_{\{(\downarrow_L x)_j,x\}}$ composing $W_{\{x\}}$ are copies of petals $W_{\{(\downarrow_L x)_j\}}$, except for the fact that  walks $W_{\{(\downarrow_L x)_j,x\}}$ are not closed.}\label{fig:x_cycle3}
%\end{figure}
\begin{figure}[h]
\centering
\includegraphics[width=\textwidth]{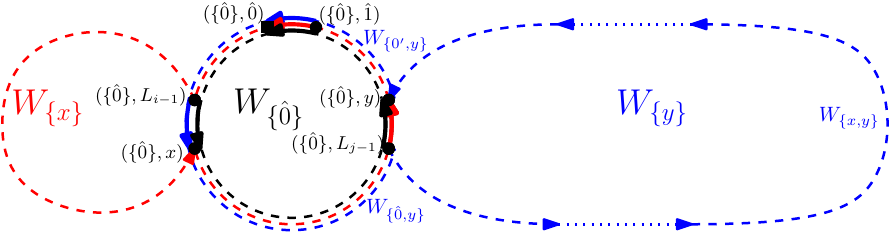}
\caption{Illustration of $W_{\{\hat{0}\}}$ (black), $W_{\{x\}}$ (red), $W_{\{y\}}$ (blue) for $\hat{0}<x<y$. The petal $W_{\{y\}}$ contains as constituents walks $W_{\{\hat{0},y\}}$ starting at $(\{\hat{0}\},\hat{0})$, $W_{\{0',y\}}$ ending at $(\{\hat{0}\},\hat{0})$, and  $W_{\{x,y\}}$, which is a copy of the petal $W_{\{x\}}$, except that it is not closed.  More concretely, the starting vertex $(\{\hat 0,x,y\},\hat 0)$ of $W_{\{x,y\}}$ and its ending vertex $(\widetilde{\{0',x,y\}},\hat 0)$ %$(\reallywidetilde{\{0',x,y\}},\hat 0)$ thanks for solving that
correspond both to $(\{\hat 0\},\hat 0)$, while for the rest of the vertices there is the bijective correspondence $(Q,z)\rightarrow([Q\setminus\{y\}]_z,z)$. Here $i,j$ are the integers such that $L_{i}=x$, $L_{j}=y$.}\label{fig:x_cycle3}
\end{figure}

As it can be seen in Figure~\ref{fig:x_cycle3}, the maximum indegree and outdegree with respect to any color of $K_{\mathbf s}$ is low.% (in fact the whole maximum degree is low).
\begin{remark}\label{rem:A_s-arcs} Let $x\in L$. Then $\Delta^+_{\mathbf s(x)}(D)=\Delta^-_{\mathbf s(x)}(D)\leq 2$.
\end{remark}
\begin{proof} Let $i$ be the index such that $x=L_{i}$, and let $x^+=\begin{cases}L_{i+1} &\text{if } x\neq {\hat{1}} \\ \hat 0 &\text{if } x={\hat{1}}. \end{cases}$ Let $u$ be a vertex of $D$. Write $u=([Q]_y,y)$, where $y\in L$ and $Q\in\mathcal Q'_{L^+}$. Note that a necessary condition for $u$ to have $\mathbf s(x)$-out-neighbours is that $x=y$. In that case, the $\mathbf s(x)$-out-neighbours of $u$ are of the form $([Q']_{y^+},y^+)$ if $y\neq {\hat{1}}$, where $Q'\in\mathcal Q'_{L^+}$ is such that $[Q]_y=[Q']_y$, and of the form $(\left[\tilde{ Q'}\right]_{y^+},y^+)$ if $y={\hat{1}}$. We have to be a bit careful when $[Q]_y=[Q']_y$ but $Q\neq Q'$, that is, when $[Q]_y=\{\hat 0\}$. In this situation, the $\mathbf s(x)$-out-neighbours of $u$ are $(\{\hat 0,y^+\},y^+)$ and $(\{\hat 0\},y^+)$ when $y\neq\hat 1$, and just $(\{\hat 0\},\hat 0)$ when $y=\hat 1$. Thus,

%{\color{gray}From the construction of $D[K_{\mathbf s}]$ it is deduced that} \comment{do you really go through the definitions of the above functions in order to deduce this or do you have some intuition or properties in mind, that you make use of? are there some arguments like in Lemma~\ref{lem:anticolors}}
\[\deg^+_{\mathbf s(x)}(u)=\begin{cases}
2 &\text{if } [Q]_y=\{{\hat{0}}\} \text{, } y=x \text{ and } x\neq {\hat{1}} \\
1 &\text{if } [Q]_y\neq\{{\hat{0}}\} \text{ and } y=x \text{, or if } y=x={\hat{1}} \\
0 &\text{if } y\neq x.
\end{cases}\]
With a similar analysis of the $\mathbf s(x)$-in-neighbors of $u$, it is concluded that
\[\deg^-_{\mathbf s(x)}(u)=\begin{cases}
2 &\text{if } [Q]_y=\{{\hat{0}}\} \text{, } y=x^+ \text{ and } x\neq {\hat{1}} \\
1 &\text{if } [Q]_y\neq\{{\hat{0}}\} \text{ and } y=x^+ \text{, or if } y=x^+={\hat{0}} \\
0 &\text{if } y\neq x^+.
\end{cases}\]
\end{proof}

\begin{example}\label{example_s}
We illustrate the construction so far with a small example. Consider the lattice $(L,\leq)$  in Figure~\ref{fig:lattice}.

\begin{figure}[h]
    \centering
    \includegraphics[width=.2\textwidth]{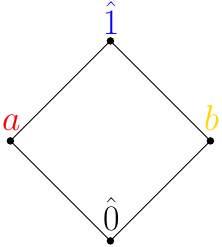}
    \caption{A lattice.}\label{fig:lattice}
\end{figure}

This yields that the set $\mathcal Q'_{L^+}$ consists of the chains: 
    $$\{{\hat{0}}\},\{{\hat{0}},a\},\{0',a\},\{{\hat{0}},b\},\{0',b\},\{{\hat{0}},{\hat{1}}\},\{0',{\hat{1}}\},\{{\hat{0}},a,{\hat{1}}\},\{0',a,{\hat{1}}\},\{{\hat{0}},b,{\hat{1}}\},\{0',b,{\hat{1}}\}.$$

\begin{figure}[h]
    \centering
    \includegraphics[width=1\textwidth]{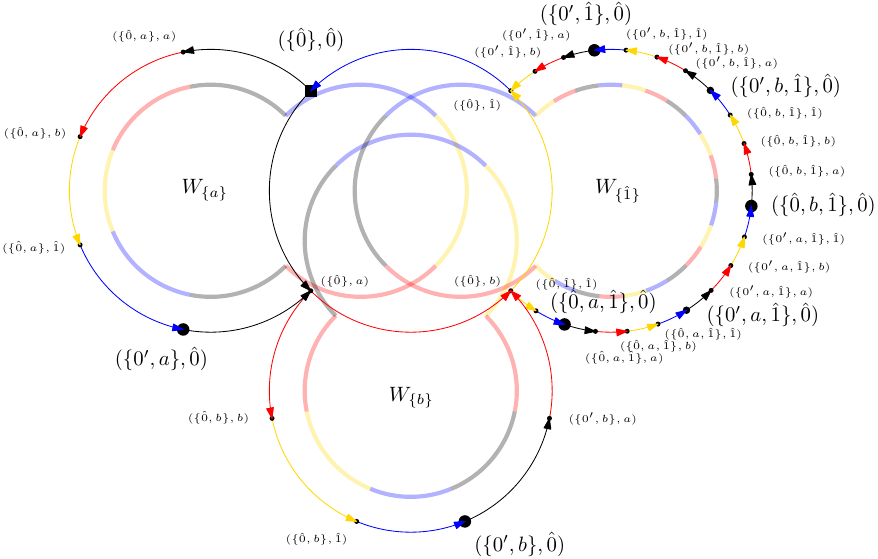}
    \caption{An example for the construction of  $D[K_{\mathbf{s}}]$ and the set of petals.}\label{fig:example_s}
\end{figure}

 Let furthermore,  ${\hat{0}}\leq^*a\leq^*b\leq^*{\hat{1}}$ be the designated linear extension of $\leq$. The resulting arc-colored digraph $D[K_{\mathbf s}]$ is depicted in Figure~\ref{fig:example_s}. The vertex $(\{{\hat{0}}\},{\hat{0}})$ is the starting and ending point of all the petals. The starting vertices of walks $W_{Q}$ are drawn larger than the other vertices, if $|Q|=2$ more than if $|Q|=3$. The $\mathbf s({\hat{0}})$-arcs are black, the $\mathbf s(a)$-arcs red, the $\mathbf s(b)$-arcs yellow, and the $\mathbf s({\hat{1}})$-arcs blue. Petals $W_{\{a\}}$, $W_{\{b\}}$, $W_{\{{\hat{1}}\}}$ are marked with background shadows. The walk $W_{\{{\hat{0}}\}}$ is just the central cycle.

\end{example}

\paragraph{The $ K_{\mathbf r}$-arcs:} Now we define the $K_{\mathbf r}$-arcs of $D$. To every $x\in L\setminus\{\hat 0\}$ we associate a color $\mathbf r(x)\in K_{\mathbf r}$ in an arbitrary bijective manner. Let  $\mathcal Q_{\mathbf r(x)}=\{Q\in\mathcal Q'_{L^+} \mid |Q|\geq 2,\ Q_2\neq x\}\cup\{\{{\hat{0}}\}\}$. The set of $\mathbf r(x)$-arcs is defined as 
\[A_{\mathbf r(x)}=
\{(([Q]_y,y),([Q]_y,y))\mid Q\in \mathcal Q_{\mathbf r(x)},\ y\in L\}.\]
Thus all arcs of $D[K_{\mathbf r}]$ are loops. Consequently,
\begin{remark}\label{rem:A_r-arcs} Let $x\in L\setminus\{\hat 0\}$. Then $\Delta^+_{\mathbf r(x)}(D)=\Delta^-_{\mathbf r(x)}(D)=1$.
\end{remark}

%\comment{is it correct to say that a vertex gets a loop of color $x$ if and only if it is in the petal $P_x$ for $x\in L\setminus \{\hat{0}\}$? Then this would probably be good to say} {\color{blue}It's a bit more complicated, but the intuition is more or less like this. A vertex gets a loop of color $x$ if and only if it is \emph{not} in the (proper) petal $P_x$ nor in its "elevations" or "preimages" 
%(although we still don't know the endomorphisms). Note that we are forced to forbid preimages like that. This corresponds to saying that a vertex $(Q,y)$ outside $W_{\{\hat 0\}}$ has an $x$-loop if and only if $Q_2\neq x$. Intuitively, the role of the $\mathbf r(x)$-arcs is to forbid endomorphisms to map undesired pieces of the digraph inside $W_{\{x\}}$]} 

Later, it will be useful to know that certain vertices are $\mathbf r(x)$-loopless. We add another easy lemma.
\begin{lemma}\label{lem:anticolors} Let $x,y\in L$ and ${\hat{0}}<x$. Vertices of the form $(\{{\hat{0}},x\},y)$ or $(\{0',x\},y)$ have no $\mathbf r(x)$-loop.  %If $0<x\leq^*y$, then the vertex $(\{0,x\},y)$ does not have an $\mathbf r(x)$-loop. If, conversely, $0\leq^*y<^*x$, then the vertex $(\{0',x\},y)$ does not have an $\mathbf r(x)$-loop. 
\end{lemma}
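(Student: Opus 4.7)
The plan is to unwind the two definitions and argue by contradiction. Recall that $A_{\mathbf r(x)}$ consists precisely of the loops $(([Q]_y,y),([Q]_y,y))$ with $Q\in\mathcal Q_{\mathbf r(x)}$ and $y\in L$, where $\mathcal Q_{\mathbf r(x)}=\{Q\in\mathcal Q'_{L^+}\mid |Q|\geq 2,\ Q_2\neq x\}\cup\{\{\hat 0\}\}$. So a vertex $v$ carries an $\mathbf r(x)$-loop if and only if it can be written as $([Q]_y,y)$ for some $y\in L$ and some such $Q$. The key observation is that by the definition of $[\,\cdot\,]_y$, the first coordinate $[Q]_y$ of any vertex is always either $\{\hat 0\}$ or $Q$ itself; there are no other options.

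First I would treat the vertex $v=(\{\hat 0,x\},y)$. Suppose for contradiction that $v$ admits an $\mathbf r(x)$-loop, witnessed by some $Q\in\mathcal Q_{\mathbf r(x)}$ with $[Q]_y=\{\hat 0,x\}$. Since $\hat 0<x$, we have $\{\hat 0,x\}\neq\{\hat 0\}$, so the above observation forces $Q=\{\hat 0,x\}$. Then $|Q|=2$ and $Q_2=x$, which directly contradicts the defining condition $Q_2\neq x$ for membership in $\mathcal Q_{\mathbf r(x)}$ (the only length-$1$ chain allowed there is $\{\hat 0\}$).

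For $v=(\{0',x\},y)$ the argument is identical. The chain $\{0',x\}$ belongs to $\mathcal Q'_{L^+}$ (it contains the minimal element $0'$), so the vertex is well-defined whenever $[Q]_y$ can equal $\{0',x\}$. Again $\{0',x\}\neq\{\hat 0\}$, so an $\mathbf r(x)$-loop would force $Q=\{0',x\}$, hence $Q_2=x$, contradicting $Q\in\mathcal Q_{\mathbf r(x)}$.

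I do not anticipate any real obstacle here: the lemma is essentially a bookkeeping check that the excluded chains in the definition of $\mathcal Q_{\mathbf r(x)}$ are exactly those whose associated vertices are the ones we want to keep loop-free. The only subtlety worth spelling out in the write-up is the reduction $[Q]_y\in\{\{\hat 0\},Q\}$, which collapses what could otherwise look like a multi-case analysis into a one-line identification $Q=\{\hat 0,x\}$ or $Q=\{0',x\}$.
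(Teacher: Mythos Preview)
Your proof is correct and follows essentially the same approach as the paper: both hinge on the observation that $[Q]_y\in\{\{\hat 0\},Q\}$, which forces any witness $Q$ to be $\{\hat 0,x\}$ or $\{0',x\}$ itself, whence $Q_2=x$ and $Q\notin\mathcal Q_{\mathbf r(x)}$. The paper's version is just a more compressed statement of the same argument.
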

\begin{proof}
Note that both $\{{\hat{0}},x\}$ and $\{0',x\}$ are chains of length $2$ with top element $x$, hence they are not in $\mathcal Q_{\mathbf r(x)}$. Hence, they cannot lie in the image $[Q]_y$ for any $Q\in\mathcal Q_{\mathbf r(x)}$, since if $[Q]_y\neq Q$, then $[Q]_y=\{{\hat{0}}\}$. %\comment{did I miss sth? the property does not seem to depend on $y$} {\color{blue}[If $y<^* x$ then $(\{0,x\},y)$ is not a vertex of $D$ ($\{0,x\}$ is not of the form $[q]_y$). If $x\leq^* y$ then $(\{0',x\},y)$ is not a vertex of $D$. Sometimes I do things like this to simplify the notation, but maybe it can be a bit trappy]} 
\end{proof}

\begin{figure}[h]
\centering
\begin{minipage}{0.15\textwidth}
        \centering
        \includegraphics[width=1\textwidth]{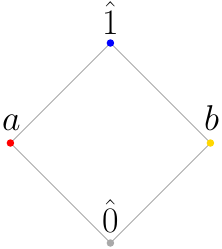}
    \end{minipage}\hfill
    \begin{minipage}{0.7\textwidth}%this has been adjusted so the positioning matches with the following pictures
        \centering
        \includegraphics[width=1\textwidth]{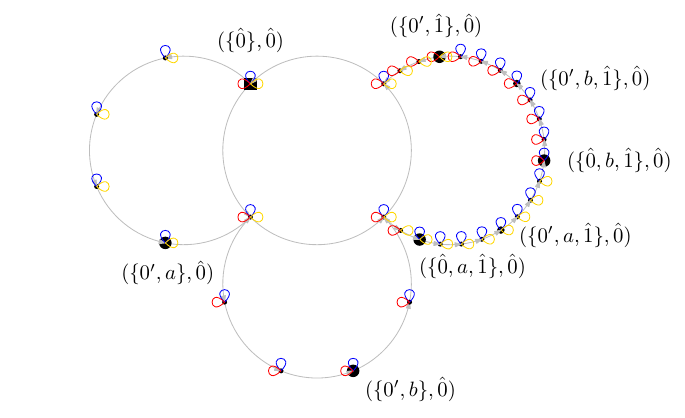}
    \end{minipage}
\caption{Illustration of $D[K_{\mathbf r}
]$ as a continuation of Example~\ref{example_s}. On the left, a drawing of $L$, where elements leading to $K_{\mathbf r}$-arcs are highlighted. On the right, $D[K_{\mathbf r}]$, with $\mathbf r(a)$-arcs in red, $\mathbf r(b)$-arcs in yellow and $\mathbf r({\hat{1}})$-arcs in blue. $K_{\mathbf s}$-arcs appear in grey as a reference. Note that in this particular example $\mathcal Q_{\mathbf r(a)}=\{\{{\hat{0}}\},\{{\hat{0}},b\},\{0',b\},\{{\hat{0}},{\hat{1}}\},\{0',{\hat{1}}\},\{{\hat{0}},b,{\hat{1}}\},\{0',b,{\hat{1}}\}\}$ and
$\mathcal Q_{\mathbf r({\hat{1}})}=\{\{{\hat{0}}\},\{{\hat{0}},a\},\{0',a\},\{{\hat{0}},b\},\{0',b\},\{{\hat{0}},a,{\hat{1}}\},\{0',a,{\hat{1}}\},\{{\hat{0}},b,{\hat{1}}\},\{0',b,{\hat{1}}\}\}$. }\label{fig:example_r}
\end{figure}

\paragraph{The $ K_{\mathbf c}$-arcs:} %\comment{here and in some of the following colors would it be correct to associated colors only to (strictly) comparable pairs $(x,y)$ with $x\neq \hat{0}$?} {\color{blue}[I think you are right. This will simplify the drawings a lot]} 
To every strictly comparable pair $(x,y)\in L\times L$ with $\hat{0}<x<y$ we associate a color $\mathbf c(x,y)\in K_{\mathbf c}$ in an arbitrary bijective manner. Let $\mathcal Q_{\mathbf c(x,y)}=\{Q\in\mathcal Q'_{L^+}\mid |Q|\geq 2,\ Q_1={\hat{0}},\ Q_2=x,\ Q^1\leq y\}\cup\{\{{\hat{0}}\}\}$. The set of $\mathbf c(x,y)$-arcs is defined as \[A_{\mathbf c(x,y)}=\{((Q,y),([Q]_{\hat{0}},{\hat{0}}))\mid Q\in\mathcal Q_{\mathbf c(x,y)}\}.\]

\begin{figure}[h]
\centering
\begin{minipage}{0.15\textwidth}
        \centering
        \includegraphics[width=1\textwidth]{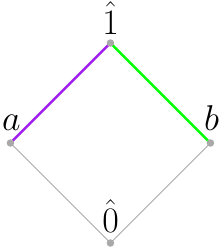}
    \end{minipage}\hfill
    \begin{minipage}{0.7\textwidth}
        \centering
        \includegraphics[width=1\textwidth]{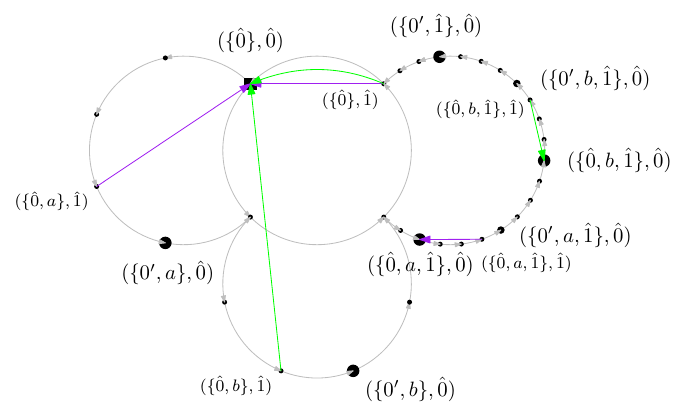}
    \end{minipage}
\caption{Illustration of %(the relevant part of) 
$D[K_{\mathbf c}
]$  as a continuation of Example~\ref{example_s}. %Here only $\mathbf c(x,y)$-arcs with $\mathcal Q_{\mathbf c(x,y)}\neq\{\{0\}\}$ do appear; that is, only those with $x\geq y>0$. 
On the left, a drawing of $L$, where the comparable pairs leading to $K_{\mathbf c}$-arcs are highlighted. On the right, $\mathbf c(a,{\hat{1}})$-arcs in purple and $\mathbf c(b,{\hat{1}})$-arcs in green. Note that $\mathcal Q_{\mathbf c(a,{\hat{1}})}=\{\{{\hat{0}}\},\{{\hat{0}},a\},\{{\hat{0}},a,{\hat{1}}\}\}$.}\label{fig:example_c}
\end{figure}

\begin{remark}\label{rem:A_c-arcs} Let $x,y\in L$ with $\hat 0<x<y$. Then $\Delta^+_{\mathbf c(x,y)}(D)=1$ and $\Delta^-_{\mathbf c(x,y)}(D)\leq 2$.
\end{remark}
\begin{proof} %If $\mathcal Q_{\mathbf c(x,y)}=\{0\}$, then $((\{0\},x),(\{0\},0))$ is the unique $\mathbf c(x,y)$-arc. If $\mathcal Q_{\mathbf c(x,y)}\neq\{0\}$, {\color{blue}[(this was when the definition of c'-arcs was homogeneous)]}
It is clear that all $\mathbf c(x,y)$-arcs have different origins. Suppose that two different $\mathbf c(x,y)$-arcs, say $((Q,y),([Q]_{\hat{0}},{\hat{0}}))$ and $((Q',y),([Q']_{\hat{0}},{\hat{0}}))$, have the same endpoint. Here $Q,Q'\in\mathcal Q_{\mathbf c(x,y)}$, $[Q]_{\hat{0}}=[Q']_{\hat{0}}$ and $Q\neq Q'$. Without loss of generality, we can assume that $[Q]_{\hat{0}}\neq Q$. Then $|Q|=2$ and $[Q]_{\hat{0}}=\{{\hat{0}}\}$. Therefore $Q=\{{\hat{0}},x\}$ and $Q'=\{{\hat{0}}\}$.
\end{proof}

\paragraph{The $K_{\mathbf {c'}}$-arcs:} The set of $K_{\mathbf {c'}}$-arcs of $D$ is similar to the set of $K_{\mathbf{c}}$-arcs and has a similar purpose. To every pair $(x,y)\in L\times L$ with $\hat{0}<x<y$ we associate a color $\mathbf{c'}(x,y)\in K_{\mathbf{c'}}$ in an arbitrary bijective manner. Let $\mathcal Q_{\mathbf{c'}(x,y)}=\{Q\in\mathcal Q'_{L^+}\mid |Q|\geq 2,\ Q_1=0',\ Q_2=x,\ Q^1\leq y\}\cup\{\{{\hat{0}}\}\}$. The set of $\mathbf{c'}(x,y)$-arcs is defined as \[A_{\mathbf {c'}(x,y)}=\{((Q\setminus\{0'\}\cup\{\hat 0\},y),(Q,{\hat{0}})), ((Q\setminus\{0'\}\cup\{\hat 0\},y),([Q]_{\hat{1}},\hat{1}))\mid Q\in\mathcal Q_{\mathbf {c'}(x,y)}\}.\]
%{\color{blue}[Maybe I'll update this definition so $\mathcal Q_{\mathbf{c'}(x,y)}$ is not needed, using $\mathcal Q_{\mathbf{c}(x,y)}$ instead]}

\begin{figure}[h]
\centering
\begin{minipage}{0.15\textwidth}
        \centering
        \includegraphics[width=1\textwidth]{example_lattice2c}
    \end{minipage}\hfill
    \begin{minipage}{0.7\textwidth}
        \centering
        \includegraphics[width=1\textwidth]{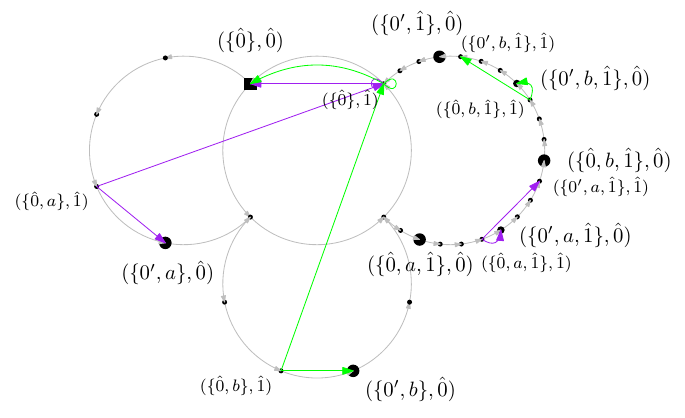}
    \end{minipage}
\caption{Illustration of %(the relevant part of) 
$D[K_{\mathbf{c'}}]$  as a continuation of Example~\ref{example_s}. %Here only $\mathbf{c'}(x,y)$-arcs with $\mathcal Q_{\mathbf{c'}(x,y)}\neq\{\{0\}\}$ do appear; that is, only those with $x\geq y>0$. 
On the left, a drawing of $L$, where the comparable pairs leading to $K_{\mathbf {c'}}$-arcs are highlighted. On the right, $\mathbf{c'}(a,{\hat{1}})$-arcs in purple and $\mathbf{c'}(b,{\hat{1}})$-arcs in green. Note that $\mathcal Q_{\mathbf{c'}(a,{\hat{1}})}=\{\{{\hat{0}}\},\{0',a\},\{0',a,{\hat{1}}\}\}$.}\label{fig:example_cc}
\end{figure}

\begin{remark}\label{rem:A_c'-arcs} Let $x,y\in L$ with $\hat 0<x<y$. Then $\Delta^+_{\mathbf{c'}(x,y)}(D)=2$ and $\Delta^-_{\mathbf{c'}(x,y)}(D)\leq 2$.
\end{remark}
\begin{proof} %If $\mathcal Q_{\mathbf{c'}(x,y)}=\{0\}$, then $((\{0\},x),(\{0\},0))$ and $((\{0\},x),(\{0\},1))$ are the unique $\mathbf{c'}(x,y)$-arcs. If $\mathcal Q_{\mathbf{c'}(x,y)}\neq\{0\}$, then {\color{blue}[(this was when the definition of c'-arcs was homogeneous)]}
By definition every $\mathbf{c'}(x,y)$-arc shares its origin with a unique other $\mathbf{c'}(x,y)$-arc. Suppose that two different $\mathbf{c'}(x,y)$-arcs, say $((Q\setminus\{0'\}\cup\{\hat 0\},y),([Q]_{\hat 1},\hat 1))$ and $((Q'\setminus\{0'\}\cup\{\hat 0\},y),([Q']_{\hat 1},\hat 1))$, have the same endpoint. Here $Q,Q'\in\mathcal Q_{\mathbf{c'}(x,y)}$, $[Q]_{\hat 1}=[Q']_{\hat 1}$ and $Q\neq Q'$. Without loss of generality, we can assume that $[Q]_{\hat 1}\neq Q$. Then $|Q|=2$ and $[Q]_{\hat 1}=\{\hat{0}\}$. Therefore $Q=\{0',x\}$ and $Q'=\{{\hat{0}}\}$.
\end{proof}

\paragraph{The $K_{\mathbf{h}}$-arcs:} To every pair $(x,y)\in L\times L$ with $\hat{0}<x<y$ we associate a color $\mathbf h(x,y)$ in an arbitrary bijective manner. The set of $\mathbf h(x,y)$-arcs is defined as %\comment{I tried to always write chains (rven if we define them as sets) with their elements in increading order} {\color{blue}[I think I always did in decreasing order. But if it's common practice I'll change it (then I should also consider changing $\mathbf h(x,y)$ for $\mathbf h(y,x)$)]}\comment{I think it is rather common, same as you rarely  denote the set of first $n$ integeres by $\{n, n-1, \ldots, 1\}$. I do not see why you should change $\mathbf h(x,y)$ for $\mathbf h(y,x)$} 
\[A_{\mathbf h(x,y)}=\{((\{{\hat{0}},y\},y),(\{{\hat{0}},x,y\},{\hat{0}})),
((\{{\hat{0}}\},y),(\{{\hat{0}}\},{\hat{0}}))\}.\]

\begin{figure}[h]
\centering
\begin{minipage}{0.15\textwidth}
        \centering
        \includegraphics[width=\textwidth]{example_lattice2c}
    \end{minipage}\hfill
    \begin{minipage}{0.7\textwidth}
        \centering
        \includegraphics[width=\textwidth]{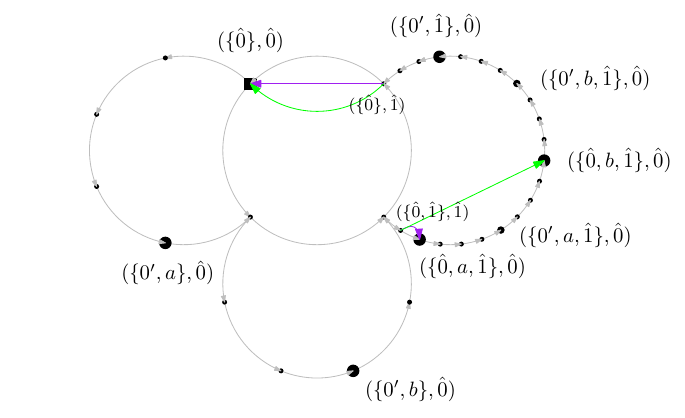}
    \end{minipage}
\caption{Illustration of $D[K_{\mathbf{h}}
]$ as a continuation of Example~\ref{example_s}. 
On the left, a drawing of $L$, where the comparable pairs leading to $K_{\mathbf{h}}$-arcs are highlighted. On the right, $\mathbf{h}(a,{\hat{1}})$-arcs in purple and $\mathbf{h}(b,{\hat{1}})$-arcs in green.}\label{fig:example_h}
\end{figure}

\begin{remark}\label{rem:A_h-arcs} Let $x,y\in L$ with $\hat 0<x<y$. Then $\Delta^+_{\mathbf h(x,y)}(D)=\Delta^-_{\mathbf h(x,y)}(D)=1$.
\end{remark}

\paragraph{The $K_{\mathbf i}$-arcs:} Let us now define the set of $K_{\mathbf i}$-arcs of $D$. The notation stands for ``inheritance": these arcs will ensure that, for $Q=\{Q_1,...,Q_k\},Q'=\{Q'_1,\ldots,Q'_{k'}\}\in\mathcal Q'_{L^+}$ with $k<k'$ and $Q_1=Q'_1,...,Q_k=Q'_k$, the endomorphic images of $W_Q$ and %\comment{I did not understand the word "copy" here} {\color{blue}[well, the same that happens with petals, as we explain in Figure 7, happens more generally in this case: $W_{Q'}$ is a "copy" of $W_{Q}$ (maybe even a real copy, because they may be both non-closed walks). But since we have not explained this, maybe it's right that it can be confusing to mention it]}
$W_{Q'}$ are related. To every pair $(x,y)\in L\times L$ with $\hat{0}<x<y$ we associate a color $\mathbf i(x,y)$ in an arbitrary bijective manner. Let $\mathcal Q_{\mathbf i(x,y)}=\{Q\in\mathcal Q'_{L^+} \mid |Q|\geq 3,\ Q_1={\hat{0}},\ Q^2=x,\ Q^1=y\}$. The set of $\mathbf i(x,y)$-arcs is defined as
{\footnotesize 
\[A_{\mathbf i(x,y)}=
\{((Q,y),(Q\setminus\{y\},y)),((Q\setminus\{y\},y),(Q\setminus\{y\},y)),((Q\setminus\{x,y\},y),(Q\setminus\{x,y\},y)) \mid Q\in\mathcal Q_{\mathbf i(x,y)}\}.
\]}

\begin{figure}[h]
\centering
\begin{minipage}{0.15\textwidth}
        \centering
        \includegraphics[width=1\textwidth]{example_lattice2c}
    \end{minipage}\hfill
    \begin{minipage}{0.7\textwidth}
        \centering
        \includegraphics[width=1\textwidth]{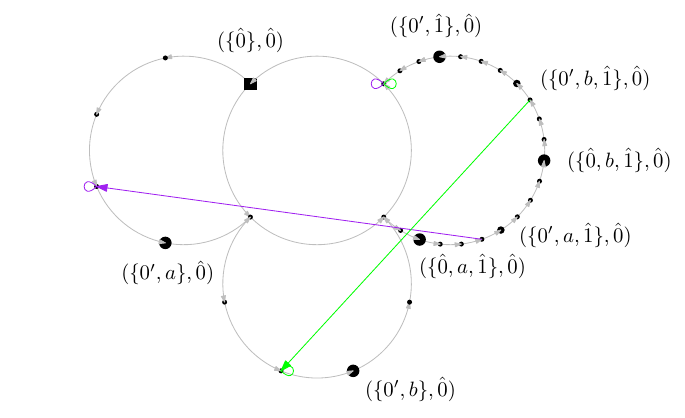}
    \end{minipage}
\caption{Illustration of $D[K_{\mathbf{i}}
]$  as a continuation of Example~\ref{example_s}. On the left, a drawing of $L$, where the comparable pairs leading to $K_{\mathbf i}$-arcs are highlighted. On the right,  $\mathbf{i}(a,{\hat{1}})$-arcs in purple and $\mathbf{i}(b,{\hat{1}})$-arcs in green. Note that $\mathcal Q_{\mathbf i(a,{\hat{1}})}=\{\{{\hat{0}},a,{\hat{1}}\}\}$.}\label{fig:example_i}
\end{figure}

\begin{remark}\label{rem:A_i-arcs} Let $x,y\in L$ with $\hat 0<x<y$. Then $\Delta^+_{\mathbf i(x,y)}(D)\leq 1$ and $\Delta^-_{\mathbf i(x,y)}(D)\leq 2$.
\end{remark}
\begin{proof} Let $u$ be a vertex of $D$. It is clear that, if $u$ is adjacent to a $K_{\mathbf i}$-arc, then it rises from a unique $Q\in\mathcal Q_{\mathbf i(x,y)}$ in the definition of $A_{\mathbf i(x,y)}$ as it is formulated. Therefore,
\[\deg^+_{\mathbf i(x,y)}(u)=\begin{cases}
1 &\text{if } \exists Q\in\mathcal Q_{\mathbf i(x,y)}\ u=(Q,y) \text{ or } u=(Q\setminus\{y\},y) \text{ or } u=(Q\setminus\{x,y\},y) \\
0 &\text{otherwise}
\end{cases}\]
and
\[\deg^-_{\mathbf i(x,y)}(u)=\begin{cases}
2 &\text{if } \exists Q\in\mathcal Q_{\mathbf i(x,y)}\ u=(Q\setminus\{y\},y) \\
1 &\text{if } \exists Q\in\mathcal Q_{\mathbf i(y,x)}\ u=(Q\setminus\{x,y\},y) \\
0 &\text{otherwise.}
\end{cases}\]
\end{proof}

\paragraph{The $K_{\mathbf j}$-arcs:} Finally we define the set of $K_{\mathbf j}$-arcs of $D$. The notation stands for ``join": with these arcs we are going to make sure that, if $x,y\in L$ are $\leq$-incomparable, certain common endomorphic behaviour of the petals $W_{\{x\}},W_{\{y\}}$ is adopted also by $W_{\{x\vee y\}}$. To every pair $(x,y)\in L\times L$ with $x\parallel_{(L,\leq)} y$ and $x\leq^*y$ we associate a color $\mathbf j(x,y)$ in an arbitrary bijective manner. Let $z=x\vee y$. The set of $\mathbf j(x,y)$-arcs is defined as
{\footnotesize\[A_{\mathbf j(x,y)}= 
\{((\{{\hat{0}},x,z\},z),(\{{\hat{0}},y,z\},z)), ((\{{\hat{0}},x\},z),(\{{\hat{0}}\},z)), ((\{{\hat{0}}\},z),(\{{\hat{0}},y\},z)), ((\{{\hat{0}}\},z),(\{{\hat{0}}\},z))\}.\]}

\begin{figure}[h]
\centering
\begin{minipage}{0.15\textwidth}
        \centering
        \includegraphics[width=1\textwidth]{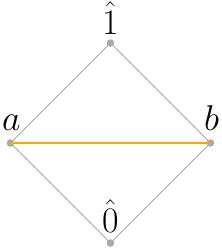}
    \end{minipage}\hfill
    \begin{minipage}{0.7\textwidth}
        \centering
        \includegraphics[width=1\textwidth]{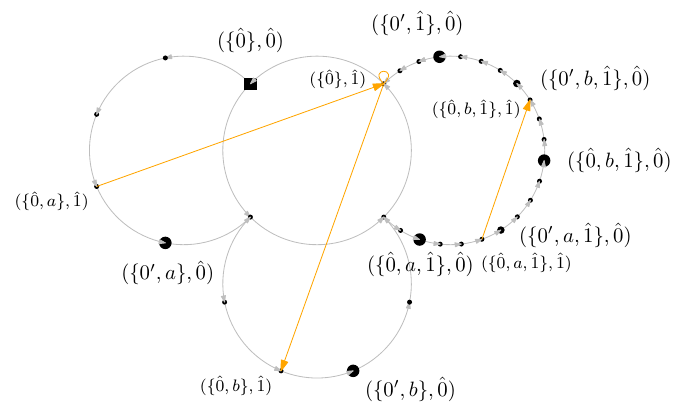}
    \end{minipage}
\caption{Illustration of $D[K_{\mathbf{j}}
]$ as a continuation of Example~\ref{example_s}. On the left, a drawing of $L$, where the unique incomparable pair leading to $K_{\mathbf j}$-arcs is highlighted. On the right, $\mathbf{j}(a,b)$-arcs in orange.}\label{fig:example_j}
\end{figure}

\begin{remark}\label{rem:A_j-arcs} Let $x,y\in L$ be two $\leq$-incomparable elements with $x\leq^*y$. Then $\Delta^+_{\mathbf j(x,y)}(D)=\Delta^-_{\mathbf j(x,y)}(D)\leq 2$.
\end{remark}
\begin{proof} Let $z=x\vee y$%and assume that $x,y$ are $\leq$-incomparable and that $x<^*y$; otherwise it is immediate that $\Delta^+_{\mathbf j(x,y)}(D)=\Delta^-_{\mathbf j(x,y)}(D)=0$
, and let $u$ be a vertex of $D$. It is clear that
\[\deg^+_{\mathbf j(x,y)}(u)=\begin{cases}2 &\text{if } u=(\{{\hat{0}}\},z)\\ 1 &\text{if } u=(\{{\hat{0}},x,z\},z) \text{ or } u=(\{{\hat{0}},x\},z) \\ 0 &\text{otherwise}\end{cases}\]
and that
\[\deg^-_{\mathbf j(x,y)}(u)=\begin{cases}2 &\text{if } u=(\{{\hat{0}}\},z)\\ 1 &\text{if } u=(\{{\hat{0}},y,z\},z) \text{ or } u=(\{{\hat{0}},y\},z) \\ 0 &\text{otherwise.}\end{cases}\]
\end{proof}

\subsubsection{Description of $\End(D)$}
Having defined the arc-colored digraph $D$, in the current subsection we will show that its endomorphism monoid coincides with $L$.

\begin{defi}\label{defi:varphi_x} Let $w\in L$. Define $\varphi_w:V\rightarrow V$ as \[\varphi_w(([Q]_x,x))=\begin{cases}([Q\,\cap\!\downarrow_{L^+}\! w]_x,x) & \text{if } Q\,\cap\!\downarrow_{L^+}\!w\neq\{0'\}\\ (\{{\hat{0}}\},x) &\text{otherwise}\end{cases}\] for every $Q\in\mathcal Q'_{L^+}$ and every $x\in L$.
\end{defi}

It is easy to verify that $\varphi_w$ is well-defined%: its image is certainly included in $V$, and if $([Q]_x,x)=([Q']_x,x)$ but $Q\neq Q'$, recall that $Q=\{\hat 0\}$, $|Q'|=2$, $Q_1=0'$ and $Q_2\leq^*x$ (or the same but with $Q,Q'$ interchanged), so $\varphi_w(([Q]_x,x))=\varphi_w(([Q']_x,x))$
. Indeed, suppose that $[Q]_x=[Q']_x$ but $Q\neq Q'$. Recall that in this situation $[Q]_x=[Q']_x=\{\hat 0\}$. Either $Q$ has two elements or $Q=\{\hat 0\}$, and the same holds for $Q'$. Hence we deduce that $\varphi_w(([Q]_x,x))=(\{\hat 0\},x)=\varphi_w(([Q']_x,x))$. Moreover, it is clear that the image of $\varphi_w$ is included in $V$. The following sequence of lemmas establishes that $\varphi_w$ indeed defines an endomorphism of $D$ for every $w\in L$. %\comment{maybe for mere aesthetics chose either $x\in L$ or $w\in L$ in the previous definition and the following statements of lemmas}

\begin{lemma}\label{lem:compatibility_chi'_S} Let $w\in L$. Then $\varphi_w\in\End(D[K_{\mathbf s}])$.
\end{lemma}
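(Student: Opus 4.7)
The plan is to verify arc-preservation by $\varphi_w$ for every color $\mathbf{s}(x)$, following the two clauses of Definition~\ref{def:Ks}. Fix $w \in L$ and, for a chain $Q \in \mathcal{Q}'_{L^+}$, set $Q^* := Q \cap \downarrow_{L^+} w$.

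For a non-closing $\mathbf{s}(x)$-arc $(([Q]_x, x), ([Q]_y, y))$ with $x \prec^* y$, both endpoints fall under the same clause of Definition~\ref{defi:varphi_x}, since that clause depends only on $Q$, not on $x$ or $y$. When $Q^* \neq \{0'\}$, the image is $(([Q^*]_x, x), ([Q^*]_y, y))$, which is an $\mathbf{s}(x)$-arc with underlying chain $Q^* \in \mathcal{Q}'_{L^+}$. When $Q^* = \{0'\}$, both endpoints collapse to $(\{\hat{0}\}, x)$ and $(\{\hat{0}\}, y)$, which is the $\mathbf{s}(x)$-arc produced from the chain $\{\hat{0}\}$. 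So non-closing arcs are handled directly.

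For a closing $\mathbf{s}(\hat{1})$-arc $(([Q]_{\hat{1}}, \hat{1}), (\tilde{Q}, \hat{0}))$ the verification reduces to the commutation identity
\begin{equation*}
    \bigl[\tilde{Q} \cap \downarrow_{L^+} w\bigr]_{\hat{0}} \;=\; \widetilde{Q^*}
\end{equation*}
to be checked whenever $Q^* \in \mathcal{Q}'_{L^+}$, together with the degenerate match $\widetilde{\{\hat{0}\}} = \{\hat{0}\}$ that governs the ``otherwise'' clause of $\varphi_w$. Granting these, the image of a closing arc is either $(([Q^*]_{\hat{1}}, \hat{1}), (\widetilde{Q^*}, \hat{0}))$ or $((\{\hat{0}\}, \hat{1}), (\{\hat{0}\}, \hat{0}))$, both $\mathbf{s}(\hat{1})$-arcs by Definition~\ref{def:Ks}.

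The main obstacle is establishing the commutation identity, which I would prove by a six-way case analysis on the definition of $\widetilde{\cdot}$ in Definition~\ref{def:Ks}. The guiding observations are: the operation $Q \mapsto Q \cap \downarrow_{L^+} w$ is order-preserving and only drops elements of $Q$ exceeding $w$; it preserves whichever of $\hat{0}$ or $0'$ sits at the bottom of $Q$; and any element of $\tilde{Q} \setminus Q$ introduced by the ``add the next sibling in $\downarrow_L Q_2$ or $\downarrow_L Q_3$'' clauses lies below the corresponding $Q_i$, hence lies below $w$ precisely when $Q_i$ does. These facts let one match, subcase by subcase, the clause of $\widetilde{\cdot}$ selected for $Q^*$ with the intersection of $\downarrow_{L^+} w$ against the result produced for $Q$. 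The bookkeeping is mechanical but tedious; once it is carried out, the lemma follows at once.
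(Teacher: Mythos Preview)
Your approach is essentially the same as the paper's: handle non-closing $\mathbf{s}(x)$-arcs directly, then verify the $\mathbf{s}(\hat{1})$-arcs by a case split on the clauses defining $\widetilde{\cdot}$. The paper carries out this split explicitly as five cases (according to $Q_1\in\{\hat 0,0'\}$ and the size of $Q^*$) with subcases, arriving at the same conclusion.

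Two imprecisions in your sketch are worth flagging. First, your commutation identity $[\tilde Q\cap\downarrow_{L^+}w]_{\hat 0}=\widetilde{Q^*}$ is not well-posed in every instance with $Q^*\in\mathcal Q'_{L^+}$: it can happen that $\tilde Q\cap\downarrow_{L^+}w=\{0'\}$ while $Q^*\neq\{0'\}$ (e.g.\ $Q_1=\hat 0$, $|Q|\ge 2$, $(\downarrow_L Q_2)_2=Q_2$, $Q_2\not\le w$), so the left side is undefined and the second coordinate is computed via the ``otherwise'' clause of $\varphi_w$; the paper's Cases~2(iii), 4(iii), and~5 handle exactly these situations. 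Second, your guiding observation that an added sibling ``lies below $w$ precisely when $Q_i$ does'' is false in the $\Leftarrow$ direction: one can have $(\downarrow_L Q_2)_2\le w$ while $Q_2\not\le w$ (the paper's Case~2(ii)). The identity still holds there, but only because the bracket $[\,\cdot\,]_{\hat 0}$ collapses the spurious two-element chain back to $\{\hat 0\}$; this is not captured by your heuristic and must be checked separately. With these corrections the case analysis goes through as in the paper.
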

\begin{proof} Let $x=L_j\in L\setminus\{{\hat{1}}\}$ and $y=L_{j+1}$. Let $a$ be an $\mathbf s(x)$-arc. Then, we have that $a=(([Q]_x,x),([Q]_y,y))$ for some $Q\in\mathcal Q'_{L^+}$. Let $P=Q\,\cap\!\downarrow_{L^+}\! w$. If $P=\{0'\}$, it is clear that $\varphi_w(a)=((\{{\hat{0}}\},x),(\{{\hat{0}}\},y))$ is an $\mathbf s(x)$-arc. If $P\neq\{0'\}$, then $P\in\mathcal Q'_{L^+}$, and it is clear that $\varphi_w(a)=(([P]_x,x),([P]_y,y))$ is an $\mathbf s(x)$-arc. 

Now let $a$ be an $\mathbf s({\hat{1}})$-arc. Then $a=(([Q]_{\hat 1},\hat 1),(\tilde Q,{\hat{0}}))$ for some $Q\in\mathcal Q'_{L^+}$. Let $P=Q\,\cap\!\downarrow_{L^+}\! w$, and let $k$ be the largest integer such that $Q_k\leq w$. We proceed with an analysis of the different possible cases. 

\setcounter{case}{0}
\begin{case}
$Q_1={\hat{0}}$ and $|P|\geq 2$.
\end{case} 
Here $k\geq 2$ and 
\[\tilde Q\,\cap\!\downarrow_{L^+}\! w=\begin{cases}\{{\hat{0}},(\downarrow_ L Q_2)_2,Q_2,...,Q_k\} &\text{if } (\downarrow_ L Q_2)_2<Q_2 \\ \{0',Q_2,...,Q_k\} &\text{if }  (\downarrow_ L Q_2)_2=Q_2. \end{cases} \]
Therefore, $[\tilde Q\,\cap\!\downarrow_{L^+}\! w]_{\hat{0}}=\tilde Q\,\cap\!\downarrow_{L^+}\! w=\tilde P$, so $\varphi_w(a)=(([P]_{\hat 1},\hat 1),([\tilde Q\,\cap\!\downarrow_{L^+}\! w]_{\hat{0}},{\hat{0}}))=(([P]_{\hat 1},\hat 1),(\tilde P,{\hat{0}}))$, which is an $\mathbf s({\hat{1}})$-arc.

\begin{case} $Q_1={\hat{0}}$ and $P=\{{\hat{0}}\}$.
\end{case}
We distinguish three subcases. 
\begin{itemize}
   \item[] \textit{Subcase (i):} $Q=\{{\hat{0}}\}$. Then it is immediate that $\varphi_w(a)$ is an $\mathbf s({\hat{1}})$-arc. 
   \item[] \textit{Subcase (ii):} $|Q|\geq 2$ and $(\downarrow_ L Q_2)_2<Q_2$. Then $\tilde Q\,\cap\!\downarrow_{L^+}\! w$ is either $\{{\hat{0}}\}$ or $\{{\hat{0}},(\downarrow_ L Q_2)_2\}$. In any case, $[\tilde Q\,\cap\!\downarrow_{L^+}\! w]_{\hat{0}}=\{{\hat{0}}\}=\tilde P$, so we obtain the same conclusion of Case~1. 
   \item[] \textit{Subcase (iii):} $|Q|\geq 2$ and $(\downarrow_ L Q_2)_2=Q_2$. Then $\tilde Q\,\cap\!\downarrow_{L^+}\! w=\{0'\}$. Thus $\varphi_w(a)=((\{{\hat{0}}\},{\hat{1}}),(\{{\hat{0}}\},{\hat{0}}))$, which is an $\mathbf s({\hat{1}})$-arc.
\end{itemize}

\begin{case} $Q_1=0'$ and $|P|\geq 3$.
\end{case}
Here $k\geq 3$ and 
\[\tilde Q\,\cap\!\downarrow_{L^+}\! w=\begin{cases}\{{\hat{0}},(\downarrow_ L Q_3)_{i+1},Q_3,...,Q_k\} &\text{if } (\downarrow_ L Q_3)_{i+1}<Q_3 \\ \{0',Q_3,...,Q_k\} &\text{if }  (\downarrow_ L Q_3)_{i+1}=Q_3, \end{cases} \]
where $i$ is the positive integer with $Q_2=(\downarrow_ L Q_3)_{i}$. Hence $[\tilde Q\,\cap\!\downarrow_{L^+}\! w]_{\hat{0}}=\tilde Q\,\cap\!\downarrow_{L^+}\! w=\tilde P$, and we end as in Case~1.

\begin{case} $Q_1=0'$ and $|P|=2$.
\end{case} 
Let $i$ be the positive integer with $Q_2=(\downarrow_ L Q_3)_{i}$ as before. Distinguish three subcases. 
\begin{itemize}
    \item[] \textit{Subcase (i):} $|Q|=2$. Then $P=Q$ and $\{{\hat{0}}\}=\tilde P=\tilde Q=\tilde Q\,\cap\!\downarrow_{L^+}\! w=[\tilde Q\,\cap\!\downarrow_{L^+}\! w]_{\hat{0}}$, so $\varphi_w(a)=a$. 
    \item[] \textit{Subcase (ii):} $|Q|\geq 3$ and $(\downarrow_ L Q_3)_{i+1}<Q_3$. We have that $\tilde Q\,\cap\!\downarrow_{L^+}\! w$ is either $\{{\hat{0}}\}$ or $\{{\hat{0}},(\downarrow_ L Q_3)_{i+1}\}$. In any case, $[\tilde Q\,\cap\!\downarrow_{L^+}\! w]_{\hat{0}}=\{{\hat{0}}\}=\tilde P$, and we end as in Case~1.
    \item[] \textit{Subcase (iii):} $|Q|\geq 3$ and $(\downarrow_ L Q_3)_{i+1}=Q_3$. Then $\tilde Q\,\cap\!\downarrow_{L^+}\! w=\{0'\}$. Thus $\varphi_w(a)=((\{{\hat{0}}\},{\hat{1}}),(\{{\hat{0}}\},{\hat{0}}))$, which is an $\mathbf s({\hat{1}})$-arc.
\end{itemize}

\begin{case} $Q_1=0'$ and $P=\{0'\}$.
\end{case}
We have that 
\[\tilde Q\,\cap\!\downarrow_{L^+}\! w=\begin{cases}\{{\hat{0}}\} &\text{if } |Q|=2 \\ Q\cup\{{\hat{0}},(\downarrow_ L Q_3)_{i+1}\}\setminus\{0',Q_2\}\,\cap\!\downarrow_{L^+}\! w &\text{if } |Q|\geq 3 \text{ and } (\downarrow_ L Q_3)_{i+1}<Q_3 \\ Q\setminus\{Q_2\}\,\cap\!\downarrow_{L^+}\! w & \text{if } |Q|\geq 3 \text{ and } (\downarrow_ L Q_3)_{i+1}=Q_3, \end{cases}\]
where $i$ is such that $Q_2=(\downarrow_ L Q_3)_{i}$. Therefore, either $[\tilde Q\,\cap\!\downarrow_{L^+}\! w]_{\hat{0}}=\{{\hat{0}}\}$ or $\tilde Q\,\cap\!\downarrow_{L^+}\! w=\{0'\}$. In any case, $\varphi_w(a)=((\{{\hat{0}}\},{\hat{1}}),(\{{\hat{0}}\},{\hat{0}}))$, which is an $\mathbf s({\hat{1}})$-arc.
\end{proof}

\begin{lemma}\label{lem:compatibility_chi'_R} Let $w\in L$. Then $\varphi_w\in\End(D[ K_{\mathbf r}])$.
\end{lemma}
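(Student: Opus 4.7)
The plan is to verify directly that every $\mathbf{r}(x)$-loop is preserved by $\varphi_w$. Since by construction every $\mathbf{r}(x)$-arc is a loop at a vertex of the form $v=([Q]_y,y)$ with $Q\in\mathcal Q_{\mathbf{r}(x)}$ and $y\in L$, the task reduces to showing that $\varphi_w(v)$ is itself a vertex of this form, i.e.\ of the shape $([Q']_y,y)$ for some $Q'\in\mathcal Q_{\mathbf{r}(x)}$.

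I will split according to Definition~\ref{defi:varphi_x}. In the ``otherwise'' branch of the definition one immediately has $\varphi_w(v)=(\{\hat 0\},y)$, which carries the desired loop because $\{\hat 0\}\in\mathcal Q_{\mathbf{r}(x)}$ by construction. In the remaining branch, set $P:=Q\,\cap\!\downarrow_{L^+}\!w$, so that $\varphi_w(v)=([P]_y,y)$. The key observation, which makes the proof much shorter than the previous compatibility argument for $K_{\mathbf{s}}$, is that because $\leq^*$ extends $\leq$ and $Q$ is a chain, the $\leq^*$-order on $Q$ is monotone with respect to $\leq$. Intersecting with the principal downset $\downarrow_{L^+}\!w$ therefore truncates $Q$ to a $\leq^*$-prefix; in particular $P_1=Q_1$ and, whenever $|P|\geq 2$, also $P_2=Q_2$.

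The conclusion is now a short case analysis on the bracket operator. If $[P]_y=\{\hat 0\}$, the previous case applies. Otherwise $[P]_y=P$, and either $P=\{\hat 0\}\in\mathcal Q_{\mathbf{r}(x)}$, or $|P|\geq 2$ and $P_2=Q_2\neq x$ (the latter inequality being exactly the defining condition of $\mathcal Q_{\mathbf{r}(x)}$ applied to $Q$), so $P\in\mathcal Q_{\mathbf{r}(x)}$ and $([P]_y,y)=(P,y)$ indeed carries an $\mathbf{r}(x)$-loop. I expect the only real bookkeeping concern to be the behaviour of the bracket operator $[\cdot]_y$ on short chains; once one sees that the prefix property $P_2=Q_2$ is inherited through the intersection with $\downarrow_{L^+}\!w$, every subcase collapses to the defining condition $Q_2\neq x$ of $\mathcal Q_{\mathbf{r}(x)}$.
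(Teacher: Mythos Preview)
Your proof is correct and follows the same route as the paper's: show that $P=Q\cap\downarrow_{L^+}w\in\mathcal Q_{\mathbf r(x)}$ whenever $P\neq\{0'\}$, and use $\{\hat 0\}\in\mathcal Q_{\mathbf r(x)}$ in the remaining case. The paper asserts the membership $P\in\mathcal Q_{\mathbf r(x)}$ in one line without justification; you supply the reason (the prefix property $P_2=Q_2$ for chains intersected with a downset), and your extra case split on whether $[P]_y=\{\hat 0\}$ is harmless but unnecessary once $P\in\mathcal Q_{\mathbf r(x)}$ is established.
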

\begin{proof} Recall that all $ K_{\mathbf r}$-arcs are loops. Let $x\in L\setminus\{{\hat{0}}\}$, and let $([Q]_y,y)$ be a vertex of $D$ with an $\mathbf r(x)$-loop, where $y\in L$ and $Q\in\mathcal Q_{\mathbf r(x)}$. Since $\{{\hat{0}}\}\in\mathcal Q_{\mathbf r(x)}$ and, if $Q\,\cap\!\downarrow_{L^+}\!w\neq\{0'\}$, also $Q\,\cap\!\downarrow_{L^+}\!w\in\mathcal Q_{\mathbf r (x)}$, we have that $\varphi_w(([Q]_y,y))$ has an $\mathbf r(x)$-loop.
\end{proof}

\begin{lemma}\label{lem:compatibility_K'_C} Let $w\in L$. Then $\varphi_w\in\End(D[K_{\mathbf c}])$.
\end{lemma}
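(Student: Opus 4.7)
My plan is to verify Definition~\ref{defi:varphi_x} directly on each $\mathbf c(x,y)$-arc. Fix $x,y\in L$ with $\hat 0<x<y$ and a generic arc $a=((Q,y),([Q]_{\hat 0},\hat 0))$ of $A_{\mathbf c(x,y)}$ coming from $Q\in\mathcal Q_{\mathbf c(x,y)}$, and set $P=Q\,\cap\!\downarrow_{L^+}\! w$. Since $\hat 0\in Q$ in every case listed in the definition of $\mathcal Q_{\mathbf c(x,y)}$, we have $\hat 0\in P$, so $P\neq\{0'\}$ and both endpoints of $a$ fall into the first branch of Definition~\ref{defi:varphi_x}.

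The sub-case $Q=\{\hat 0\}$ is trivial, since $\varphi_w$ fixes both endpoints of $a$. Otherwise $|Q|\geq 2$ with $Q_1=\hat 0$, $Q_2=x$ and $Q^1\leq y$, and I would split according to whether $x\leq w$. If $x\leq w$, then $\{\hat 0,x\}\subseteq P\subseteq Q$ and $P$ inherits $P_1=\hat 0$, $P_2=x$, $P^1\leq y$, so $P\in\mathcal Q_{\mathbf c(x,y)}$. In this scenario I would check that $[P]_y=P$ (the collapse clause of $[\cdot]_y$ would require $y<^*P_2=x$, which is ruled out by $x\leq y$), and that $\varphi_w$ carries $([Q]_{\hat 0},\hat 0)$ to $([P]_{\hat 0},\hat 0)$; combining these two facts gives $\varphi_w(a)=((P,y),([P]_{\hat 0},\hat 0))\in A_{\mathbf c(x,y)}$. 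If on the other hand $x\not\leq w$, then no element of $Q\setminus\{\hat 0\}$ lies in $\downarrow_{L^+}\! w$, so $P=\{\hat 0\}$ and $\varphi_w$ sends $a$ to $((\{\hat 0\},y),(\{\hat 0\},\hat 0))$, which is precisely the $\mathbf c(x,y)$-arc indexed by $\{\hat 0\}\in\mathcal Q_{\mathbf c(x,y)}$.

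The only real bookkeeping is with the bracket operator $[\cdot]_{\hat 0}$, which collapses two-element chains starting with $\hat 0$. In the case $x\leq w$ I will have to distinguish $|Q|=2$ from $|Q|\geq 3$: when $|Q|=2$ one has $P=Q=\{\hat 0,x\}$, so both $[Q]_{\hat 0}$ and $[P]_{\hat 0}$ collapse to $\{\hat 0\}$ and the endpoint is sent to $(\{\hat 0\},\hat 0)$ on both sides; when $|Q|\geq 3$ one has $[Q]_{\hat 0}=Q$, and applying $\varphi_w$ replaces $Q$ by $[P]_{\hat 0}$, giving the right target regardless of whether $|P|=2$ or $|P|\geq 3$. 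This straightforward but slightly tedious case split is the main thing to be careful with; everything else reduces to a direct application of the definitions.
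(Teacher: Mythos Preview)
Your proposal is correct and follows essentially the same approach as the paper's proof: both arguments show that $P=Q\cap\downarrow_{L^+}w$ lies in $\mathcal Q_{\mathbf c(x,y)}$ (because $\hat 0\in Q$ always) and that $\varphi_w(a)$ is precisely the $\mathbf c(x,y)$-arc indexed by $P$. The paper's proof compresses this into three lines without the explicit case split on $x\leq w$ or the detailed verification of the bracket $[\cdot]_y$ and $[\cdot]_{\hat 0}$; your version spells out that bookkeeping, which is fine and arguably clearer, but there is no substantive difference in strategy.
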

\begin{proof} Let $x,y\in L$ with ${\hat{0}}<x<y$. Any $\mathbf c(x,y)$-arc $a$ of $D$ is of the form $((Q,y),([Q]_{\hat{0}},{\hat{0}}))$ for some $Q\in\mathcal Q_{\mathbf c(x,y)}$. Since $\{{\hat{0}}\}\in\mathcal Q_{\mathbf c(x,y)}$ and, if $Q\,\cap\!\downarrow_{L^+}\!w\neq\{0'\}$, also $Q\,\cap\!\downarrow_{L^+}\!w\in\mathcal Q_{\mathbf c(x,y)}$, we have that $\varphi_w(a)$ is a $\mathbf c(x,y)$-arc.
\end{proof}

\begin{lemma}\label{lem:compatibility_K'_{C'}} Let $w\in L$. Then $\varphi_w\in\End(D[K_{\mathbf{c'}}])$.
\end{lemma}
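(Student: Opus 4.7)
The plan is to follow the template of Lemma~\ref{lem:compatibility_K'_C}, now adapted to the slightly richer shape of the $K_{\mathbf{c'}}$-arcs. Every $\mathbf{c'}(x,y)$-arc has one of the two forms
\[((Q\setminus\{0'\}\cup\{\hat 0\},y),(Q,\hat 0))\quad\text{or}\quad((Q\setminus\{0'\}\cup\{\hat 0\},y),([Q]_{\hat 1},\hat 1))\]
for some $Q\in\mathcal Q_{\mathbf{c'}(x,y)}$, so it is enough to check that $\varphi_w$ sends the three possible endpoints of these arcs to endpoints of $\mathbf{c'}(x,y)$-arcs built from some other $P\in\mathcal Q_{\mathbf{c'}(x,y)}$.

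First I would dispose of the trivial case $Q=\{\hat 0\}$: the three endpoints all have the form $(\{\hat 0\},\cdot)$, and because $\{\hat 0\}\cap\downarrow_{L^+}w=\{\hat 0\}$ they are fixed by $\varphi_w$. Then I would treat the generic case $Q=\{0',x,\ldots,Q^1\}$ with $Q^1\leq y$, splitting on whether $x\leq w$. If $x\leq w$, set $P=Q\cap\downarrow_{L^+}w$; then $0',x\in P$ and $P^1\leq Q^1\leq y$, so $P\in\mathcal Q_{\mathbf{c'}(x,y)}$. Using the identity $(Q\setminus\{0'\}\cup\{\hat 0\})\cap\downarrow_{L^+}w=P\setminus\{0'\}\cup\{\hat 0\}$ one then checks that $\varphi_w$ carries the pair of arcs attached to $Q$ onto the pair attached to $P$. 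If instead $x\not\leq w$, then $Q\cap\downarrow_{L^+}w$ is either $\{0'\}$ or $\varnothing$, while $(Q\setminus\{0'\}\cup\{\hat 0\})\cap\downarrow_{L^+}w=\{\hat 0\}$, so each endpoint lands at $(\{\hat 0\},\cdot)$, i.e.~we fall back on the arcs coming from $\{\hat 0\}\in\mathcal Q_{\mathbf{c'}(x,y)}$.

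The main obstacle is bookkeeping around the operator $[\cdot]_z$ when the chain has length two. Concretely I would verify that $[Q]_{\hat 1}$ and $[P]_{\hat 1}$ both collapse to $\{\hat 0\}$ precisely when the chain in question equals $\{0',x\}$, and that the other relevant evaluations $[P]_{\hat 0}$ and $[P\setminus\{0'\}\cup\{\hat 0\}]_y$ reduce to the chain itself. These all follow from the inequalities $\hat 0<^* x<^* \hat 1$, which hold because $\leq^*$ extends $\leq$; once they are recorded at the outset, the rest of the argument is the same mechanical verification that appears in Lemmas~\ref{lem:compatibility_K'_C} and~\ref{lem:compatibility_chi'_S}.
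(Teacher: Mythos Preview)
Your proposal is correct and follows essentially the same approach as the paper's proof: both reduce to showing that for each $Q\in\mathcal Q_{\mathbf{c'}(x,y)}$ the chain $P=Q\cap\downarrow_{L^+}w$ (or $\{\hat 0\}$ in the degenerate case) again lies in $\mathcal Q_{\mathbf{c'}(x,y)}$, so that the two $\mathbf{c'}(x,y)$-arcs attached to $Q$ are carried to those attached to $P$. Your write-up is simply more explicit than the paper's one-line version, spelling out the split $x\leq w$ versus $x\not\leq w$ and the $[\cdot]_z$ bookkeeping that the paper leaves implicit.
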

\begin{proof} Let $x,y\in L$ with ${\hat{0}}<x<y$. Any $\mathbf{c'}(x,y)$-arc $a$ of $D$ is of the form $((Q\setminus\{0'\}\cup\{{\hat{0}}\},y),(Q,{\hat{0}}))$ or $((Q\setminus\{0'\}\cup\{{\hat{0}}\},y),([Q]_{\hat{1}},{\hat{1}}))$ for some $Q\in\mathcal Q_{\mathbf{c'}(x,y)}$. Since $\{{\hat{0}}\}\in\mathcal Q_{\mathbf{c'}(x,y)}$ and, if $Q\,\cap\!\downarrow_{L^+}\!w\neq\{0'\}$, also $Q\,\cap\!\downarrow_{L^+}\!w\in\mathcal Q_{\mathbf{c'}(x,y)}$, we have that $\varphi_w(a)$ is a $\mathbf{c'}(x,y)$-arc.
\end{proof}

\begin{lemma}\label{lem:compatibility_K'_H} Let $w\in L$. Then $\varphi_w\in\End(D[K_{\mathbf h}])$.
\end{lemma}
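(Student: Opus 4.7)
The proof will follow the same template as the preceding compatibility lemmas, but is substantially simpler because for each pair $\hat 0<x<y$ the set $A_{\mathbf h(x,y)}$ consists of only two arcs. The plan is to fix $x,y\in L$ with $\hat 0<x<y$ and verify by direct case analysis that $\varphi_w$ sends each of these two arcs to a $\mathbf h(x,y)$-arc.

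First I will dispose of the ``short'' arc $((\{\hat 0\},y),(\{\hat 0\},\hat 0))$: since $\{\hat 0\}\,\cap\!\downarrow_{L^+}\!w=\{\hat 0\}\neq\{0'\}$ regardless of $w$, and $[\{\hat 0\}]_y=[\{\hat 0\}]_{\hat 0}=\{\hat 0\}$, the map $\varphi_w$ fixes both endpoints and the arc is preserved.

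The main work concerns the ``long'' arc $((\{\hat 0,y\},y),(\{\hat 0,x,y\},\hat 0))$. Here I will split into three cases according to the position of $w$ relative to $\{x,y\}$:
\begin{enumerate}[(i)]
\item $y\leq w$: then $\{\hat 0,y\}$ and $\{\hat 0,x,y\}$ are both contained in $\downarrow_{L^+}\!w$, so $\varphi_w$ fixes both endpoints and the long arc is preserved.
\item $x\leq w$ but $y\not\leq w$: then $\{\hat 0,y\}\,\cap\!\downarrow_{L^+}\!w=\{\hat 0\}$ and $\{\hat 0,x,y\}\,\cap\!\downarrow_{L^+}\!w=\{\hat 0,x\}$, so one must check that these two sets pass correctly through the $[\cdot]_y$ and $[\cdot]_{\hat 0}$ brackets to produce the endpoints of the short arc.
\item $x\not\leq w$: then both intersections collapse to $\{\hat 0\}$, and $\varphi_w$ sends the long arc to the short arc directly.
\end{enumerate}

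The only genuine subtlety appears in case (ii), where I will have to invoke the fact that $[\{\hat 0,x\}]_{\hat 0}=\{\hat 0\}$: this follows from the definition of $[\cdot]_{\hat 0}$ because $|\{\hat 0,x\}|=2$, its first element is $\hat 0$, and $\hat 0<^*x$ (since $\hat 0$ is the global minimum of $L$ and hence minimum in any linear extension). Once this observation is in place, all remaining steps are immediate from the definition of $\varphi_w$ and Definition~\ref{def:Ks}, and the lemma follows.
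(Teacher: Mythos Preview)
Your proof is correct and follows essentially the same approach as the paper's own proof, which simply states (without the case split) that the long arc is sent either to itself or to the short arc, and that the short arc is fixed. Your explicit three-case analysis and the observation that $[\{\hat 0,x\}]_{\hat 0}=\{\hat 0\}$ make transparent exactly the computation the paper leaves implicit.
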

\begin{proof} Let $x,y\in L$ with ${\hat{0}}<x<y$. We see that the image by $\varphi_w$ of the $\mathbf h(x,y)$-arc $((\{{\hat{0}},y\},y),(\{{\hat{0}},x,y\},{\hat{0}}))$ is either itself or $((\{{\hat{0}}\},y),(\{{\hat{0}}\},{\hat{0}}))$, which is also an $\mathbf h(x,y)$-arc. Moreover, $((\{{\hat{0}}\},y),(\{{\hat{0}}\},{\hat{0}}))$ is fixed by $\varphi_w$. 
\end{proof} 

\begin{lemma}\label{lem:compatibility_chi'_I} Let $w\in L$. Then $\varphi_w\in\End(D[ K_{\mathbf i}])$.
\end{lemma}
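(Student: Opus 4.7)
The plan is to verify directly that every arc of $A_{\mathbf{i}(x,y)}$ is sent by $\varphi_w$ to another arc of $A_{\mathbf{i}(x,y)}$, for each comparable pair $\hat{0} < x < y$ in $L$. Fix such $x, y$ and $Q \in \mathcal{Q}_{\mathbf{i}(x,y)}$; recall that $A_{\mathbf{i}(x,y)}$ contributes three arcs associated to $Q$: the \emph{descent} arc $((Q, y), (Q\setminus\{y\}, y))$ and the two loops at $(Q\setminus\{y\}, y)$ and $(Q\setminus\{x, y\}, y)$. Throughout, set $P := Q \cap \downarrow_{L^+} w$.

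Two preliminary observations simplify the computation. First, every subchain of $Q$ contains $\hat{0}$ (since $Q_1 = \hat{0}$), so none equals $\{0'\}$ and the first branch in the definition of $\varphi_w$ always applies. Second, every such subchain $R$ has maximum element at most $y$, and an inspection of the definition of $[R]_y$ shows that $[R]_y = R$ in all cases: if $|R| \neq 2$ this is immediate, while if $|R| = 2$ the relevant condition $y <^* R_2$ fails because $R_2 \leq y$ and $\leq^*$ linearly extends $\leq$. Hence the bracket operation may be dropped throughout the calculation.

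I would then split into three cases according to the comparability of $w$ with $x, y$. If $y \leq w$, then $Q \subseteq \downarrow_{L^+} w$, so $P = Q$, $\varphi_w$ fixes the three source vertices, and each arc maps to itself. If $x \leq w$ but $y \not\leq w$, then $Q' := P \cup \{y\}$ belongs to $\mathcal{Q}_{\mathbf{i}(x,y)}$, because $P$ contains $\hat{0}$ and has $x$ as its maximum. A short computation then shows that $\varphi_w$ sends both $(Q, y)$ and $(Q\setminus\{y\}, y)$ to $(P, y) = (Q' \setminus \{y\}, y)$ and sends $(Q\setminus\{x, y\}, y)$ to $(P \setminus \{x\}, y) = (Q' \setminus \{x, y\}, y)$, so the descent arc and the first loop both become the loop at $(Q' \setminus \{y\}, y)$, while the second loop becomes the loop at $(Q' \setminus \{x, y\}, y)$. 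Finally, if $x \not\leq w$, then $x, y \notin P$, and $Q'' := P \cup \{x, y\}$ lies in $\mathcal{Q}_{\mathbf{i}(x,y)}$; all three source vertices now collapse to $(P, y) = (Q'' \setminus \{x, y\}, y)$, and the three arcs map to the loop at this vertex.

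The main obstacle is the bookkeeping in the last two cases: one has to check that the chains $Q'$ and $Q''$ obtained from $P$ still satisfy the defining conditions of $\mathcal{Q}_{\mathbf{i}(x,y)}$, namely length at least $3$, minimum $\hat{0}$, and top two elements $x$ and $y$. These conditions follow respectively from $\hat{0} \in P$, from the case hypothesis on $x$, and from the observation that the remaining elements of $P$ lie strictly below $x$ in $L$. With this in place the lemma is established.
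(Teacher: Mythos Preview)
Your proof is correct and follows essentially the same approach as the paper's. The only difference is organizational: the paper splits into three cases according to the arc type (the non-loop arc and the two loops) and then by the largest index $i$ with $Q_i\leq w$, whereas you split first according to the position of $w$ relative to $x$ and $y$ and then treat all three arcs at once. Both arguments reduce to the same observation, namely that the image chain can be completed to a member $Q^*\in\mathcal Q_{\mathbf i(x,y)}$ by adjoining $\{y\}$ or $\{x,y\}$ as needed, so that the resulting loop appears in $A_{\mathbf i(x,y)}$.
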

\begin{proof} Let $x,y\in L$ with $\hat 0<x<y$, and let $u,v$ be two vertices of $D$ such that $(u,v)$ is a $\mathbf i(x,y)$-arc. Distinguish three cases.

\setcounter{case}{0}
\begin{case} $u=(Q,y)$ and $v=(Q\setminus\{y\},y)$ for some $Q\in\mathcal Q_{\mathbf i(x,y)}$.
\end{case}
Let $i$ be the greatest index such that $Q_i\leq w$. If $i=|Q|$ then $\varphi_w(u)=u$ and $\varphi_w(v)=v$. If $i\leq|Q|-1$ then $\varphi_w(u)=\varphi_w(v)=(\{Q_1,...,Q_i\},y)$, and we are done, because $\{Q_1,...,Q_i,x,y\}\in\mathcal Q_{\mathbf i(x,y)}$.

\begin{case} $u=v=(Q\setminus\{y\},y)$ for some $Q\in\mathcal Q_{\mathbf i(x,y)}$.
\end{case}
Let $i$ be the greatest index such that $Q_i\leq w,x$. Then $\varphi_w(u)=(\{Q_1,...,Q_i\},y)$, and we are done, because $\{Q_1,...,Q_i,x,y\}\in\mathcal Q_{\mathbf i(x,y)}$.

\begin{case} $u=v=(Q\setminus\{x,y\},y)$ for some $Q\in\mathcal Q_{\mathbf i(x,y)}$.
\end{case}
Let $i$ be the greatest index such that $Q_i\leq w,Q^3$. Then $\varphi_w(u)=(\{Q_1,...,Q_i\},y)$, and we are done, because $\{Q_1,...,Q_i,x,y\}\in\mathcal Q_{\mathbf i(x,y)}$.
\end{proof}

\begin{lemma}\label{lem:compatibility_chi'_J} Let $w\in L$. Then $\varphi_w\in\End(D[ K_{\mathbf j}])$.
\end{lemma}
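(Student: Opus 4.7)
The plan is to fix $x,y\in L$ with $x\parallel_{(L,\leq)} y$, $x\leq^* y$, and $z=x\vee y$, and then verify that $\varphi_w$ sends each of the four $\mathbf{j}(x,y)$-arcs
\[
a_1=((\{{\hat{0}},x,z\},z),(\{{\hat{0}},y,z\},z)),\ \ a_2=((\{{\hat{0}},x\},z),(\{{\hat{0}}\},z)),\ \ a_3=((\{{\hat{0}}\},z),(\{{\hat{0}},y\},z)),\ \ a_4=((\{{\hat{0}}\},z),(\{{\hat{0}}\},z))
\]
to another $\mathbf{j}(x,y)$-arc. The first preliminary remark is that every chain $Q\in\{\{{\hat{0}}\},\{{\hat{0}},x\},\{{\hat{0}},y\},\{{\hat{0}},x,z\},\{{\hat{0}},y,z\}\}$ satisfies $[Q]_z=Q$: the only way the bracket could collapse a length-$2$ chain $\{{\hat{0}},t\}$ to $\{{\hat{0}}\}$ is $z<^*t$, but $t\in\{x,y\}$ forces $t\leq z$, hence $t\leq^* z$. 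Thus the bracket operation plays no role here and we only need to compute the intersections $Q\cap{\downarrow}_{L^+}\!w$.

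The second observation is that, since $z=x\vee y$, the conditions $x\leq w$ and $y\leq w$ are equivalent to $z\leq w$; consequently only four cases arise for $w$:

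\emph{(i)} $z\leq w$. Every chain above is already contained in $\downarrow_{L^+}\!w$, so $\varphi_w$ fixes each of $a_1,a_2,a_3,a_4$.

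\emph{(ii)} $x\leq w$ and $y\not\leq w$ (so $z\not\leq w$). Then $\{{\hat{0}},x,z\}\cap{\downarrow}_{L^+}\!w=\{{\hat{0}},x\}$ and $\{{\hat{0}},y,z\}\cap{\downarrow}_{L^+}\!w=\{{\hat{0}}\}$, so $\varphi_w(a_1)=a_2$; similarly $\varphi_w(a_2)=a_2$, $\varphi_w(a_3)=a_4$, $\varphi_w(a_4)=a_4$.

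\emph{(iii)} $y\leq w$ and $x\not\leq w$. Symmetric to (ii): $\varphi_w(a_1)=a_3$, $\varphi_w(a_2)=a_4$, $\varphi_w(a_3)=a_3$, $\varphi_w(a_4)=a_4$.

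\emph{(iv)} $x,y\not\leq w$. All four intersections reduce to $\{{\hat{0}}\}$, hence $\varphi_w(a_i)=a_4$ for each $i$.

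There is no obstacle of any real difficulty: everything is essentially a bookkeeping check, and the design of $A_{\mathbf{j}(x,y)}$ is exactly tailored so that the four possible ``truncations'' of $\{{\hat{0}},x,z\}$ by the principal ideal $\downarrow_{L^+}\!w$ each land on one of the four prescribed arcs. The most error-prone step is ensuring that the bracket $[\,\cdot\,]_z$ does not surprise us, which is handled by the first observation; once that is in hand the four cases above cover every possibility.
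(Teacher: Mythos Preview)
Your proof is correct and follows essentially the same approach as the paper's: a direct case analysis verifying that each $\mathbf j(x,y)$-arc is sent to a $\mathbf j(x,y)$-arc. The only difference is organizational---you slice the case analysis by the position of $w$ relative to $x,y,z$ and then treat all four arcs at once, whereas the paper slices by the arc and then treats the possible positions of $w$; you also make the harmlessness of the bracket $[\,\cdot\,]_z$ explicit, which the paper leaves implicit.
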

\begin{proof} Let $x,y\in L$ be two $\leq$-incomparable elements with $x<^*y$, and let $u,v$ be two vertices of $D$ such that $(u,v)$ is a $\mathbf j(x,y)$-arc. Let $z=x\vee y$. We distinguish four cases.

\setcounter{case}{0}
\begin{case} $u=(\{{\hat{0}},x,z\},z)$ and $v=(\{{\hat{0}},y,z\},z)$.
\end{case}
If $x,y\not\leq w$ then $\varphi_w(u)=\varphi_w(v)=(\{{\hat{0}}\},z)$, and we are done. If $x\leq w$ and $y\not\leq w$ then $\varphi_w(u)=(\{{\hat{0}},x\},z)$, $\varphi_w(v)=(\{{\hat{0}}\},z)$, and we are done. If $x\not\leq w$ and $y\leq w$ a symmetric situation occurs. If $x,y\leq w$ then also $z\leq w$, so $u,v$ are fixed under $\varphi_w$.

\begin{case}$u=(\{{\hat{0}},x\},z)$ and $v=(\{{\hat{0}}\},z)$.
\end{case}
If $x\not\leq w$ then $\varphi_w(u)=\varphi_w(v)=(\{{\hat{0}}\},z)$, and we are done. If $x\leq w$ then $u,v$ are fixed under $\varphi_w$.

\begin{case} $u=(\{{\hat{0}}\},z)$ and $v=(\{{\hat{0}},y\},z)$.
\end{case}
A symmetric argument to that of Case~2 applies.

\begin{case} $u=v=(\{{\hat{0}}\},z)$.
\end{case}
In this case $(u,v)$ is fixed by $\varphi_w$.
\end{proof}

Lemmas~\ref{lem:compatibility_chi'_S},~\ref{lem:compatibility_chi'_R},~\ref{lem:compatibility_K'_C},~\ref{lem:compatibility_K'_{C'}},~\ref{lem:compatibility_K'_H},~\ref{lem:compatibility_chi'_I}
and \ref{lem:compatibility_chi'_J} show that $\varphi_w$ maps colored arcs to arcs of the same color. Thus, we have shown:
\begin{prop}\label{prop:Phi_well_defined} Let $w\in L$. Then $\varphi_w\in\End(D)$.
\end{prop}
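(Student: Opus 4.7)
The plan is to simply assemble the seven preceding lemmas, which already verify the required endomorphism property color class by color class. Recall that the digraph $D$ was constructed as the superposition on the common vertex set $V$ of the seven arc-colored sub-digraphs $D[K_{\mathbf s}]$, $D[K_{\mathbf r}]$, $D[K_{\mathbf c}]$, $D[K_{\mathbf{c'}}]$, $D[K_{\mathbf h}]$, $D[K_{\mathbf i}]$, and $D[K_{\mathbf j}]$, and that the color set $K$ is by definition the disjoint union
\[K=K_{\mathbf s}\sqcup K_{\mathbf r}\sqcup K_{\mathbf c}\sqcup K_{\mathbf{c'}}\sqcup K_{\mathbf h}\sqcup K_{\mathbf i}\sqcup K_{\mathbf j}.\]
Hence every arc of $D$ carries a unique color belonging to exactly one of these seven color classes.

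To establish $\varphi_w\in\End(D)$ I need to show that for every $c\in K$ and every $c$-arc $(u,v)$ of $D$, the pair $(\varphi_w(u),\varphi_w(v))$ is again a $c$-arc. The map $\varphi_w$ has a single definition (Definition~\ref{defi:varphi_x}) that does not depend on the color class considered, so its well-definedness has already been noted. It therefore suffices to invoke Lemma~\ref{lem:compatibility_chi'_S} for colors in $K_{\mathbf s}$, Lemma~\ref{lem:compatibility_chi'_R} for $K_{\mathbf r}$, Lemma~\ref{lem:compatibility_K'_C} for $K_{\mathbf c}$, Lemma~\ref{lem:compatibility_K'_{C'}} for $K_{\mathbf{c'}}$, Lemma~\ref{lem:compatibility_K'_H} for $K_{\mathbf h}$, Lemma~\ref{lem:compatibility_chi'_I} for $K_{\mathbf i}$, and Lemma~\ref{lem:compatibility_chi'_J} for $K_{\mathbf j}$; together these cover every color of $K$ and conclude the proof.

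There is no genuine obstacle at this step: all of the technical content has been absorbed into the seven compatibility lemmas already proved, each of which handles one arc class by a short case analysis driven by the recipe $Q\mapsto Q\cap\downarrow_{L^+}\! w$ used in the definition of $\varphi_w$. The present proposition is therefore a brief bookkeeping statement that simply records the consequence of collecting all seven lemmas into a single assertion.
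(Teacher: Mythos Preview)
Your proposal is correct and matches the paper's own proof exactly: the paper also simply observes that Lemmas~\ref{lem:compatibility_chi'_S}--\ref{lem:compatibility_chi'_J} together show $\varphi_w$ preserves every color class, which is precisely the assembly argument you give.
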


We now show that every endomorphism of $D$ is of the form $\varphi_w$ for some $w\in L$. %{\color{blue}[change $x$ for $w$?]}
In order to do so for every  $\varphi\in\End(D)$ we define the set $L_{\varphi}=\{x\in L\mid \varphi(W_{\{x\}})=W_{\{x\}}\}$. The following sequence of lemmas  will establish that $L_{\varphi}$ is a principal ideal and hence we will be able to associate its maximal element $\ell_{\varphi}$ to $\varphi$. Then we will show that every $\varphi\in\End(D)$ is precisely $\varphi_{\ell_{\varphi}}$.

\begin{lemma}\label{lem:base_cycle_fixed} For any $\varphi\in\End(D)$, ${\hat{0}}\in L_{\varphi}$.
\end{lemma}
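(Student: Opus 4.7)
The idea is to single out the vertices of $W_{\{\hat{0}\}}$ by a ``loop-maximality'' property coming from the $K_{\mathbf{r}}$-arcs, and then to use the $K_{\mathbf{s}}$-arcs along $W_{\{\hat{0}\}}$ to force each such vertex to be fixed individually.

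First, $W_{\{\hat{0}\}}$ traverses exactly the vertices $\{(\{\hat{0}\},y)\mid y\in L\}$ (its starting and ending vertices both equal $(\{\hat{0}\},\hat{0})$). I would then prove the following characterization: a vertex $v=([Q]_y,y)$ of $D$ carries an $\mathbf{r}(x)$-loop for every $x\in L\setminus\{\hat{0}\}$ if and only if $[Q]_y=\{\hat{0}\}$. The ``if'' direction is clear because $\{\hat{0}\}\in\mathcal{Q}_{\mathbf{r}(x)}$ for every such $x$. For the converse, suppose $[Q]_y\neq\{\hat{0}\}$; then the existence of an $\mathbf{r}(x)$-loop at $v$ forces $[Q]_y\in\mathcal{Q}_{\mathbf{r}(x)}$, so $|[Q]_y|\geq 2$ and $([Q]_y)_2\neq x$. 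Since $([Q]_y)_1\in\{\hat{0},0'\}$ and $([Q]_y)_2$ lies strictly above it, we have $([Q]_y)_2\in L\setminus\{\hat{0}\}$; the choice $x=([Q]_y)_2$ then produces a missing $\mathbf{r}(x)$-loop, a contradiction. Consequently, color-preservation of $\varphi$ implies that $\varphi$ sends $V(W_{\{\hat{0}\}})$ into itself, so we may write $\varphi((\{\hat{0}\},y))=(\{\hat{0}\},\sigma(y))$ for some $\sigma:L\to L$.

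Next, I would use the $\mathbf{s}$-arcs of $W_{\{\hat{0}\}}$ to force $\sigma=\mathrm{id}_L$. With $L=\{L_1,\ldots,L_n\}$ ordered by $\leq^*$, set $x^+:=L_{i+1}$ if $x=L_i$ with $i<n$, and $x^+:=\hat{0}$ if $x=\hat{1}$. Then $((\{\hat{0}\},x),(\{\hat{0}\},x^+))\in A_{\mathbf{s}(x)}$, so its $\varphi$-image is an $\mathbf{s}(x)$-arc whose tail is $(\{\hat{0}\},\sigma(x))$. However, the computation of $\deg^+_{\mathbf{s}(x)}$ in the proof of Remark~\ref{rem:A_s-arcs} shows that a vertex $([R]_z,z)$ can be the tail of an $\mathbf{s}(x)$-arc only when $z=x$; hence $\sigma(x)=x$ for every $x\in L$. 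Since each color class $A_c$ is parallel-arc-free, fixing every vertex of $W_{\{\hat{0}\}}$ also fixes all of its arcs, yielding $\varphi(W_{\{\hat{0}\}})=W_{\{\hat{0}\}}$, i.e., $\hat{0}\in L_\varphi$. The only step requiring care is the loop-based characterization of $V(W_{\{\hat{0}\}})$; the subsequent $\mathbf{s}$-arc argument is then immediate from Remark~\ref{rem:A_s-arcs}.
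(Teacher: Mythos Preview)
Your argument is correct, but it takes a different route from the paper. The paper works entirely inside $D[K_{\mathbf s}]$: it observes that $W_{\{\hat 0\}}$ is the unique directed closed walk of length $n$ (indeed the shortest closed walk) beginning with an $\mathbf s(\hat 0)$-arc, and since any endomorphism must send such a walk to another such walk, $\varphi(W_{\{\hat 0\}})=W_{\{\hat 0\}}$ follows in one stroke. You instead bring in the $K_{\mathbf r}$-layer, characterising $V(W_{\{\hat 0\}})$ as precisely the vertices carrying an $\mathbf r(x)$-loop for \emph{every} $x\in L\setminus\{\hat 0\}$, which immediately gives $\varphi$-invariance of the vertex set; only afterwards do you invoke the $K_{\mathbf s}$-arcs (via the tail condition recorded in Remark~\ref{rem:A_s-arcs}) to force each vertex to be fixed. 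Your approach is a little longer but more explicit: it avoids the global verification that no other short closed $K_{\mathbf s}$-walk exists, trading it for a local loop-maximality check. The paper's approach, on the other hand, uses only the $K_{\mathbf s}$-structure and is more in the spirit of the subsequent lemmas, which also argue via uniqueness of certain $K_{\mathbf s}$-walks.
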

\begin{proof} By examining $D[ K_{\mathbf s}]$ one realizes that $W_{\{{\hat{0}}\}}$ is its only directed closed walk of length $n$ starting with an $\mathbf s({\hat{0}})$-arc. In fact, this is the shortest possible. Therefore, $\varphi(W_{\{{\hat{0}}\}})=W_{\{{\hat{0}}\}}$.
\end{proof}

\begin{lemma}\label{lem:easy_lemma} Let $\varphi\in\End(D)$, $x\in L$, and $v$ be a vertex of $W_{\{x\}}$ which is not a vertex of $W_{\{{\hat{0}}\}}$. If $\varphi(v)=v$ then $x\in L_{\varphi}$. 
\end{lemma}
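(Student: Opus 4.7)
The goal is to show directly that $\varphi(W_{\{x\}}) = W_{\{x\}}$. First, by Lemma~\ref{lem:base_cycle_fixed} the endomorphism $\varphi$ maps $W_{\{\hat 0\}}$ to itself, and since the $n$ successive arcs of the cycle $W_{\{\hat 0\}}$ carry pairwise distinct colours $\mathbf s(L_1),\ldots,\mathbf s(L_n)$, $\varphi$ must fix every vertex of $W_{\{\hat 0\}}$ individually. In particular the base point $(\{\hat 0\},\hat 0)$ is fixed, so $\varphi(W_{\{x\}})$ is another closed walk at $(\{\hat 0\},\hat 0)$ with exactly the same length and colour sequence as $W_{\{x\}}$. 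Fixing a position $p$ at which $v$ occurs in $W_{\{x\}}$, the hypothesis $\varphi(v)=v$ ensures that $\varphi(W_{\{x\}})$ also passes through $v$ at position $p$.

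Next, the degree computation in the proof of Remark~\ref{rem:A_s-arcs} shows that any vertex not on $W_{\{\hat 0\}}$ has indegree and outdegree at most one in each colour $\mathbf s(y)$. Hence the backward and forward trajectories from $v$ along the prescribed colour sequence are uniquely determined for as long as they stay outside $W_{\{\hat 0\}}$. Inspecting the decomposition $W_{\{x\}}=W_{\{\hat 0,x\}}\,W_{\{(\downarrow_L x)_2,x\}}\cdots W_{\{0',x\}}$ from Lemma~\ref{lem:petals} and using that $[Q]_z=Q$ whenever $|Q|\geq 3$, the off-$W_{\{\hat 0\}}$ portion of $W_{\{x\}}$ is a single contiguous block of positions, bounded on the left by the branch-off arc inside $W_{\{\hat 0,x\}}$ and on the right by the re-entry arc inside $W_{\{0',x\}}$. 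This uniqueness forces $\varphi(W_{\{x\}})$ to coincide with $W_{\{x\}}$ on that whole block; one extra backward (resp.\ forward) step, using the uniqueness of the in-arc (resp.\ out-arc) of the first (resp.\ last) off-vertex, shows that the entry and exit vertices on $W_{\{\hat 0\}}$ also agree.

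It remains to control the prefix of $\varphi(W_{\{x\}})$ from $(\{\hat 0\},\hat 0)$ to the entry vertex, and the symmetric suffix, which both have length at most $n-1$. Since $\varphi$ fixes $W_{\{\hat 0\}}$ pointwise, these segments have fixed endpoints. Any attempt to branch off $W_{\{\hat 0\}}$ into a petal $W_{\{y\}}$ during one of these segments would, by the same uniqueness argument, commit the walk to an off-$W_{\{\hat 0\}}$ detour of length $(|\downarrow_L y|-1)n\geq n$ before returning to $W_{\{\hat 0\}}$, which is too long to fit. Hence the prefix and suffix must trace $W_{\{\hat 0\}}$ exactly as $W_{\{x\}}$ does, giving $\varphi(W_{\{x\}}) = W_{\{x\}}$ and thus $x \in L_\varphi$. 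The main obstacle is the combinatorial bookkeeping required to pin down that the off-$W_{\{\hat 0\}}$ portion of $W_{\{x\}}$ forms a single contiguous block; the essential dynamical ingredient is already the uniqueness observation drawn from Remark~\ref{rem:A_s-arcs}.
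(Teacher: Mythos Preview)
Your argument is correct, and it takes a more explicit route than the paper's. The paper's three-line proof simply notes that there is a \emph{unique shortest} $K_{\mathbf s}$-walk $W_1$ from $(\{\hat 0\},\hat 0)$ to $v$ and a unique shortest $K_{\mathbf s}$-walk $W_2$ from $v$ back, observes that $W_{\{x\}}=W_1W_2$, and concludes $\varphi(W_i)=W_i$ because $\varphi$ preserves lengths and fixes the two endpoints. Your approach unpacks what is implicit in that uniqueness claim: you use the degree analysis of Remark~\ref{rem:A_s-arcs} to propagate $\varphi(v)=v$ along the off-$W_{\{\hat 0\}}$ portion of the petal and handle the on-$W_{\{\hat 0\}}$ portion separately. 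Both approaches rest on the same structural fact about $D[K_{\mathbf s}]$; the paper hides it behind ``unique shortest'', whereas you verify the contiguous-block picture directly.

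One simplification: your third paragraph is unnecessary. Once you know (from paragraph~2) that the off-$W_{\{\hat 0\}}$ part of $W_{\{x\}}$ is a single contiguous block, the prefix and suffix consist \emph{entirely} of vertices of $W_{\{\hat 0\}}$. Since you already proved in paragraph~1 that $\varphi$ fixes $W_{\{\hat 0\}}$ pointwise, every vertex of the prefix and suffix is fixed by $\varphi$, and the arcs are determined by their endpoints and colour. No detour-length estimate is needed. (Incidentally, the specific length $(|\downarrow_L y|-1)n$ you quote is not correct in general --- the number of base constituents of $W_{\{y\}}$ is $2$ times the number of chains in the open interval $(\hat 0,y)$, not $|\downarrow_L y|$ --- though the lower bound $\geq n$ you need does hold.)
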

\begin{proof} Note that there is a unique shortest $K_{\mathbf s}$-walk from $(\{{\hat{0}}\},{\hat{0}})$ to $v$ and a unique shortest $K_{\mathbf s}$-walk from $v$ to $(\{{\hat{0}}\},{\hat{0}})$; name them $W_1$ and $W_2$. From the fact that $v$ is not a vertex of $W_{\{{\hat{0}}\}}$ we get that $W_{\{x\}}=W_1W_2$. Therefore $\varphi(W_{\{x\}})=\varphi(W_1W_2)=\varphi(W_1)\varphi(W_2)=W_1W_2=W_{\{x\}}$, using that $\varphi(v)=v$ and that $\varphi((\{{\hat{0}}\},{\hat{0}}))=(\{{\hat{0}}\},{\hat{0}})$ by Lemma~\ref{lem:base_cycle_fixed}.
\end{proof}

\begin{lemma}\label{lem:petal_periphery} Let $\varphi\in\End(D)$ and $x\in L$. If $x\notin L_{\varphi}$, then $\varphi(W_{\{{\hat{0}},x\}})=\varphi(W_{\{0',x\}})=W_{\{{\hat{0}}\}}$.
\end{lemma}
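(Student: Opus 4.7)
The plan is to reduce the statement to a small case analysis after first enumerating, via Remark~\ref{rem:A_s-arcs}, the directed $K_{\mathbf s}$-walks of length $n$ in $D[K_{\mathbf s}]$ starting (resp.~ending) at $(\{\hat{0}\},\hat{0})$ with color sequence $\mathbf{s}(L_1),\ldots,\mathbf{s}(L_n)$. Starting from $(\{\hat{0}\},\hat{0})$, at every base-cycle vertex $(\{\hat{0}\},L_k)$ with $k<n$ the $\mathbf{s}(L_k)$-out-degree is exactly $2$: one arc continues along the base cycle to $(\{\hat{0}\},L_{k+1})$ and the other branches into the $L_{k+1}$-petal, reaching $(\{\hat{0},L_{k+1}\},L_{k+1})$; once inside a petal Remark~\ref{rem:A_s-arcs} forces the relevant $\mathbf{s}$-out-degree down to $1$, so the walk is completely determined from that point on. Hence the forward walks in question are exactly $W_{\{\hat{0}\}}$ and $W_{\{\hat{0},L_i\}}$ for $i=2,\ldots,n$; the same bookkeeping run in reverse yields that the walks of length $n$ ending at $(\{\hat{0}\},\hat{0})$ with the same color sequence are exactly $W_{\{\hat{0}\}}$ and $W_{\{0',L_i\}}$ for $i=2,\ldots,n$.

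Write $x=L_j$. Lemma~\ref{lem:base_cycle_fixed} gives $\varphi((\{\hat{0}\},\hat{0}))=(\{\hat{0}\},\hat{0})$ and in fact fixes $W_{\{\hat{0}\}}$ pointwise, so $\varphi(W_{\{\hat{0},x\}})$ is one of the $n$ walks above. Assume for contradiction that $\varphi(W_{\{\hat{0},x\}})=W_{\{\hat{0},L_i\}}$ and compare the two walks position by position. If $i<j$, position $i$ of $W_{\{\hat{0},x\}}$ is the fixed base-cycle vertex $(\{\hat{0}\},L_i)$ while position $i$ of $W_{\{\hat{0},L_i\}}$ is $(\{\hat{0},L_i\},L_i)\neq(\{\hat{0}\},L_i)$. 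If $i=j$, then $W_{\{\hat{0},x\}}$ is fixed pointwise and the vertex $(\{\hat{0},x\},x)\in V(W_{\{x\}})\setminus V(W_{\{\hat{0}\}})$ is fixed, so Lemma~\ref{lem:easy_lemma} forces $x\in L_\varphi$, against the hypothesis. If $i>j$, position $i$ gives $\varphi((\{\hat{0},x\},L_i))=(\{\hat{0},L_i\},L_i)$; since $x\neq L_i$ the chain $\{\hat{0},x\}$ lies in $\mathcal{Q}_{\mathbf{r}(L_i)}$, so the preimage carries an $\mathbf{r}(L_i)$-loop that, by Lemma~\ref{lem:anticolors}, the image cannot carry. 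Each alternative contradicts an assumption, so $\varphi(W_{\{\hat{0},x\}})=W_{\{\hat{0}\}}$.

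The second equality $\varphi(W_{\{0',x\}})=W_{\{\hat{0}\}}$ is established by the same three-case strategy using the ``ending-at-base'' enumeration. Assuming $\varphi(W_{\{0',x\}})=W_{\{0',L_i\}}$, the case $i=j$ is killed by Lemma~\ref{lem:easy_lemma} applied to the fixed vertex $(\{0',x\},\hat{0})\in V(W_{\{x\}})\setminus V(W_{\{\hat{0}\}})$; the case $i>j$ forces $\varphi((\{\hat{0}\},L_k))=(\{0',L_i\},L_k)$ at some position $j\le k\le i-1$, where $W_{\{0',x\}}$ has already merged with the base cycle while $W_{\{0',L_i\}}$ has not, contradicting that $W_{\{\hat{0}\}}$ is fixed pointwise; and in the case $i<j$, reading position $1$ gives $\varphi((\{0',x\},\hat{0}))=(\{0',L_i\},\hat{0})$, where the preimage carries an $\mathbf{r}(L_i)$-loop (since $x\neq L_i$) but the image does not, by Lemma~\ref{lem:anticolors}. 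The main technical obstacle I expect is the clean execution of the first-step enumeration — one has to keep track of which pairs $(Q,y)$ get identified through the $[\,\cdot\,]_y$ operator and confirm that genuine branchings occur only on the base cycle — but once that enumeration is in hand, the three-case split dispatches each undesired alternative by exactly one of the three tools (Lemma~\ref{lem:base_cycle_fixed}, Lemma~\ref{lem:easy_lemma}, Lemma~\ref{lem:anticolors}).
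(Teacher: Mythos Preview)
Your proof is correct and follows essentially the same approach as the paper's: enumerate the length-$n$ $K_{\mathbf s}$-walks starting (resp.\ ending) at $(\{\hat 0\},\hat 0)$, then use the fixedness of the base cycle, Lemma~\ref{lem:easy_lemma}, and the $\mathbf r$-loops via Lemma~\ref{lem:anticolors} to rule out every non-base image. The paper's version is simply more compressed---it folds the cases $i<j$ and $i=j$ into the single clause ``since $x\notin L_\varphi$, by Lemma~\ref{lem:easy_lemma} there is some $y>^*x$''---and in the $W_{\{0',x\}}$ half it reads off the contradiction at position $i-1$ rather than at position~$1$, but these are cosmetic differences.
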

\begin{proof} Suppose that $\varphi(W_{\{{\hat{0}},x\}})\neq W_{\{{\hat{0}}\}}$. Then, since $x\notin L_{\varphi}$, by Lemma~\ref{lem:easy_lemma} there is some $y\in L$ with $y>^*x$ such that $\varphi((\{{\hat{0}},x\},y))=(\{{\hat{0}},y\},y)$. But $(\{{\hat{0}},x\},y)$ has an $\mathbf r(y)$-loop, and $(\{{\hat{0}},y\},y)$ does not by Lemma~\ref{lem:anticolors}. The remainder is similar: suppose that $\varphi(W_{\{0',x\}})\neq W_{\{{\hat{0}}\}}$. Since $x\notin L_{\varphi}$, there are some $\leq^*$-consecutive $w,w'\in L$ with $x>^*w>^*w'$ such that $\varphi((\{0',x\},w'))=(\{0',w\},w')$. But $(\{0',x\},w')$ has a $\mathbf r(w)$-loop and $(\{0',w\},w')$ does not.
\end{proof}

\begin{lemma}\label{lem:partial_petal_folding} Let $\varphi\in\End(D)$ and $x,y\in L$ with $x<y$. If $y\notin L_{\varphi}$ and $x\in L_{\varphi}$, then $\varphi(W_{\{x,y\}})=W_{\{x\}}$.
\end{lemma}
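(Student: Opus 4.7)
The plan is to show that $\varphi$ sends $W_{\{x,y\}}$ to $W_{\{x\}}$ under the natural position-preserving bijection that identifies $(Q,z)\in W_{\{x,y\}}$ with $([Q\setminus\{y\}]_z,z)\in W_{\{x\}}$. First observe that because $x\in L_\varphi$ the walk $\varphi(W_{\{x\}})$ equals $W_{\{x\}}$ as a walk, and since both start at the vertex $(\{\hat 0\},\hat 0)$ which is fixed by $\varphi$ (Lemma~\ref{lem:base_cycle_fixed}), matching vertex by vertex forces $\varphi$ to fix $W_{\{x\}}$ pointwise. So the task reduces to proving that $\varphi$ collapses each vertex of $W_{\{x,y\}}$ onto the corresponding vertex of $W_{\{x\}}$.

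The first step is to pin down the starting vertex of $W_{\{x,y\}}$. Applying Lemma~\ref{lem:petal_periphery} to $y$ gives $\varphi(W_{\{\hat 0,y\}})=W_{\{\hat 0\}}$; matching positions along these two walks of length $n$ yields $\varphi((\{\hat 0,y\},y))=(\{\hat 0\},y)$. The only $\mathbf h(x,y)$-arc out of $(\{\hat 0,y\},y)$ is $((\{\hat 0,y\},y),(\{\hat 0,x,y\},\hat 0))$, and the only $\mathbf h(x,y)$-arc out of $(\{\hat 0\},y)$ is $((\{\hat 0\},y),(\{\hat 0\},\hat 0))$. Hence $\varphi((\{\hat 0,x,y\},\hat 0))=(\{\hat 0\},\hat 0)$, which is precisely the starting vertex of $W_{\{x\}}$.

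Next I would propagate this control along the decomposition of $W_{\{x,y\}}$ supplied by Lemma~\ref{lem:petals}, inducting on the recursion depth. For the first straight constituent $W_{\{\hat 0,x,y\}}$, whose interior vertices are $(\{\hat 0,x,y\},L_i)$, the $\mathbf c(x,y)$-arc $((\{\hat 0,x,y\},y),(\{\hat 0,x,y\},\hat 0))$ combined with the already-fixed image of its target, restricts $\varphi((\{\hat 0,x,y\},y))$ to lie in $\{(\{\hat 0\},y),(\{\hat 0,x\},y)\}$; the $\mathbf i(x,y)$-arc $((\{\hat 0,x,y\},y),(\{\hat 0,x\},y))$ together with $\varphi$ fixing $(\{\hat 0,x\},y)\in W_{\{x\}}$ then narrows the image to $(\{\hat 0,x\},y)$. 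The remaining vertices $(\{\hat 0,x,y\},L_i)$ are forced by the $K_{\mathbf s}$-arcs between them (which only branch at base-cycle vertices $(\{\hat 0\},z)$) combined with the $K_{\mathbf r}$-loop constraints recorded in Lemma~\ref{lem:anticolors}, yielding $\varphi((\{\hat 0,x,y\},L_i))=([\{\hat 0,x\}]_{L_i},L_i)$ for every $i$, i.e., $\varphi(W_{\{\hat 0,x,y\}})=W_{\{\hat 0,x\}}$. The subsequent subwalks $W_{\{(\downarrow_L x)_j,x,y\}}$ and $W_{\{0',x,y\}}$ are themselves assembled by the same recursive rule, and with their starting vertices (which coincide with ending vertices of previously-treated subwalks) already mapped onto the correct start of the matching subwalk of $W_{\{x\}}$, the analogous argument using the $\mathbf c$, $\mathbf{c'}$, $\mathbf h$, $\mathbf i$, and $\mathbf r$ gadgets closes the induction.

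The technical heart, and the main obstacle, is controlling the branching at the vertices of the form $(\{\hat 0\},z)$ where $\Delta^+_{\mathbf s(z)}=2$: at every such point the naked $K_{\mathbf s}$-color sequence permits two continuations, and one must verify that the cocktail of gadget arcs attached to the three-element chains in $Q$, together with the $K_{\mathbf r}$-loops, consistently selects the continuation corresponding to the desired collapse of $y$. This is a careful but case-by-case verification rather than a conceptual barrier, mirroring the verifications already carried out in Lemmas~\ref{lem:compatibility_chi'_S}--\ref{lem:compatibility_chi'_J}.
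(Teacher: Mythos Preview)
Your opening moves match the paper exactly: use Lemma~\ref{lem:petal_periphery} to get $\varphi((\{\hat 0,y\},y))=(\{\hat 0\},y)$, then the unique $\mathbf h(x,y)$-arc to pin the first vertex of $W_{\{x,y\}}$ to $(\{\hat 0\},\hat 0)$, and then the $\mathbf i(x,y)$-arc (the paper uses only this arc together with Lemma~\ref{lem:easy_lemma} in contrapositive, rather than your $\mathbf c$--$\mathbf i$ combination, but either works) to send $(\{\hat 0,x,y\},y)$ to $(\{\hat 0,x\},y)$. You should also note that the case $x=\hat 0$ must be handled separately, since none of the $\mathbf h,\mathbf c,\mathbf i$ colours exist then; the paper simply invokes Lemma~\ref{lem:petal_periphery} directly for that case.

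Where you diverge is after pinning this single interior vertex. You propose to crawl through the entire recursive decomposition of $W_{\{x,y\}}$, subwalk by subwalk, and you correctly identify the obstacle: at every base-cycle vertex $(\{\hat 0\},z)$ the $K_{\mathbf s}$-walk branches, and you would need an ad hoc gadget argument at each branch. The paper avoids this completely. Instead it pins one more vertex, the \emph{last} vertex of $W_{\{x,y\}}$, again to $(\{\hat 0\},\hat 0)$ (via the $\mathbf h$-arc for the next element of $\downarrow_L y$ in Case~1, or via Lemma~\ref{lem:petal_periphery} in Case~2). With start, the specific interior vertex $(\{\hat 0,x\},y)\notin V(W_{\{\hat 0\}})$, and end all determined, the paper appeals to precisely the mechanism inside Lemma~\ref{lem:easy_lemma}: $W_{\{x\}}$ is the concatenation of the unique shortest $K_{\mathbf s}$-walk from $(\{\hat 0\},\hat 0)$ to $(\{\hat 0,x\},y)$ with the unique shortest $K_{\mathbf s}$-walk back, and the two halves of $\varphi(W_{\{x,y\}})$ have exactly those lengths, so they must coincide with these shortest walks. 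That single observation replaces your entire induction and makes the branching analysis unnecessary. Your route would eventually succeed, but it reproduces work that is deferred to Proposition~\ref{prop:Phi_surjective}; the paper's three-vertex shortcut is what keeps this lemma short.
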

\begin{proof} If $x={\hat{0}}$, this is a consequence of Lemma~\ref{lem:petal_periphery}. Therefore, assume that $x>{\hat{0}}$. By Lemma~\ref{lem:petal_periphery} $\varphi((\{{\hat{0}},y\},y))=(\{{\hat{0}}\},y)$. Since there is only one $\mathbf h(x,y)$-arc with origin $(\{{\hat{0}}\},y)$ we have that the image of the first vertex of $W_{\{x,y\}}$ is $\varphi((\{{\hat{0}},x,y\},{\hat{0}}))=(\{{\hat{0}}\},{\hat{0}})$. By Lemma~\ref{lem:easy_lemma} the image by $\varphi$ of the $\mathbf i(x,y)$-arc $((\{{\hat{0}},x,y\},y),(\{{\hat{0}},x\},y))$ is $((\{{\hat{0}},x\},y),(\{{\hat{0}},x\},y))$. To finish the argument we also need to know the image of the last vertex of $W_{\{x,y\}}$. Let $i$ be the integer with $(\downarrow_L y)_i=x$. 
%\begin{nota} Let $W$ be a directed walk $v_0,a_1,v_1,...,a_k,v_k$. Denote by $\alpha(W)$ the vertex $v_0$ and by $\omega(W)$ the vertex $v_k$.
%\end{nota}
Distinguish two cases.

\setcounter{case}{0}
\begin{case} $(\downarrow_L y)_{i+1}<y$.
\end{case} 
In that case the last vertex of $W_{\{x,y\}}$ is $(\{{\hat{0}},(\downarrow_L y)_{i+1},y\},{\hat{0}})$, and we get $\varphi((\{{\hat{0}},(\downarrow_L y)_{i+1},y\},{\hat{0}}))=(\{{\hat{0}}\},{\hat{0}})$ arguing as above. 

\begin{case} $(\downarrow_L y)_{i+1}=y$.
\end{case} 
In that case the last vertex of $W_{\{x,y\}}$ is $(\{0',y\},{\hat{0}})$, and we get $\varphi((\{0',y\},{\hat{0}}))=(\{{\hat{0}}\},{\hat{0}})$ by Lemma~\ref{lem:petal_periphery}. 

From the images of $(\{\hat 0,x,y\},y)$ and the first and last vertices of $W_{\{x,y\}}$, it is deduced that $\varphi(W_{\{x,y\}})=W_{\{x\}}$. 
\end{proof}

\begin{prop} For every $\varphi\in\End(D)$, there is an $\ell_{\varphi}\in L$ such that  $L_{\varphi}=\downarrow_L\ell_{\varphi}$ is the principal ideal of $L$ with maximal element $\ell_{\varphi}$.
\end{prop}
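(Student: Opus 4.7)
My proof plan is to establish two closure properties of $L_\varphi$: \emph{downward closure} (if $y \in L_\varphi$ and $x \leq y$, then $x \in L_\varphi$) and \emph{closure under binary joins} (if $x, y \in L_\varphi$, then $x \vee y \in L_\varphi$). Together with Lemma~\ref{lem:base_cycle_fixed}, which ensures $\hat{0} \in L_\varphi$, these properties force $\ell_\varphi := \bigvee L_\varphi$ to lie in $L_\varphi$ (by join closure) and $L_\varphi = \downarrow_L \ell_\varphi$ (by downward closure), which is the statement to be proved.

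For downward closure, I would fix $y \in L_\varphi$ and $\hat{0} < x < y$ and aim to pin down a vertex of $W_{\{x\}}$ lying off $W_{\{\hat{0}\}}$ so that Lemma~\ref{lem:easy_lemma} applies. Since $\varphi(W_{\{y\}}) = W_{\{y\}}$, in particular $\varphi$ fixes $(\{\hat{0}, y\}, y)$; the unique outgoing $\mathbf{h}(x, y)$-arc at this vertex then forces $\varphi((\{\hat{0}, x, y\}, \hat{0})) = (\{\hat{0}, x, y\}, \hat{0})$. This is the first vertex of the sub-walk $W_{\{\hat{0}, x, y\}}$, and all its vertices have first coordinate $\{\hat{0}, x, y\}$ of size $3$, so the collapse condition $[Q]_w = \{\hat{0}\}$ never applies; hence by Remark~\ref{rem:A_s-arcs} the outgoing $\mathbf{s}$-arc at each such vertex is unique. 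Walking along $W_{\{\hat{0}, x, y\}}$ step by step then shows that $\varphi$ fixes each of its vertices, in particular $(\{\hat{0}, x, y\}, y)$. The unique $\mathbf{i}(x, y)$-out-arc from this vertex, namely $((\{\hat{0}, x, y\}, y), (\{\hat{0}, x\}, y))$, now pins $(\{\hat{0}, x\}, y)$, a vertex of $W_{\{x\}}$ not on $W_{\{\hat{0}\}}$, so Lemma~\ref{lem:easy_lemma} yields $x \in L_\varphi$.

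For join closure, by downward closure I may assume $x, y \in L_\varphi$ are $\leq$-incomparable, and put $z = x \vee y$. Since $\varphi$ fixes $W_{\{x\}}$ and $W_{\{y\}}$, it fixes the vertices $(\{\hat{0}, x\}, z)$ and $(\{\hat{0}, y\}, z)$ lying on them. The unique outgoing $\mathbf{j}(x, y)$-arc at $(\{\hat{0}, x\}, z)$ is to $(\{\hat{0}\}, z)$, which forces $\varphi((\{\hat{0}\}, z)) = (\{\hat{0}\}, z)$. I would then use the $\mathbf{j}(x, y)$-arc $((\{\hat{0}, x, z\}, z), (\{\hat{0}, y, z\}, z))$ together with the $K_{\mathbf{h}}$- and $K_{\mathbf{i}}$-arcs linking $W_{\{x, z\}}$ and $W_{\{y, z\}}$ to the already-fixed petals $W_{\{x\}}$ and $W_{\{y\}}$ to propagate these fixed points into $W_{\{z\}}$, eventually pinning down a vertex of $W_{\{z\}}$ off $W_{\{\hat{0}\}}$. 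A second application of Lemma~\ref{lem:easy_lemma} then gives $z \in L_\varphi$.

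The hardest step will be the join-closure argument: while downward closure reduces to a clean rigid traversal through a region where out-degrees are uniformly $1$, join closure has to weave together several colored-arc constraints near the central vertex $(\{\hat{0}\}, \hat{0})$, where different sub-walks overlap and the endomorphic images of individual vertices are not a priori unique; it is here that the simultaneous use of $K_{\mathbf{j}}$, $K_{\mathbf{h}}$, and $K_{\mathbf{i}}$ becomes essential.
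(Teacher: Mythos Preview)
Your downward-closure argument is correct but needlessly indirect: the vertex $(\{\hat 0,x,y\},y)$ already lies on the petal $W_{\{y\}}$ (it is a vertex of the constituent sub-walk $W_{\{\hat 0,x,y\}}\subseteq W_{\{x,y\}}\subseteq W_{\{y\}}$), so once $y\in L_\varphi$ this vertex is fixed outright, with no need to pass through $\mathbf h(x,y)$ and then step along $W_{\{\hat 0,x,y\}}$. From there your use of the unique $\mathbf i(x,y)$-out-arc to pin $(\{\hat 0,x\},y)$ and invoke Lemma~\ref{lem:easy_lemma} is exactly what the paper does.

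The join-closure step, however, has a genuine gap. The first concrete move you make---using the $\mathbf j(x,y)$-arc from $(\{\hat 0,x\},z)$ to $(\{\hat 0\},z)$---gains nothing, since $(\{\hat 0\},z)$ lies on $W_{\{\hat 0\}}$ and is fixed already. After that your sketch says only that one should ``propagate'' along $K_{\mathbf j}$, $K_{\mathbf h}$, $K_{\mathbf i}$ to reach some fixed vertex of $W_{\{z\}}$, without naming which vertex or which arcs force it. The difficulty you flag is real: the relevant vertices $(\{\hat 0,x,z\},z)$ and $(\{\hat 0,y,z\},z)$ each have \emph{two} candidate images under the $\mathbf i(\cdot,z)$-constraints (the vertex itself or its $\mathbf i$-target on $W_{\{x\}}$ resp.\ $W_{\{y\}}$), and nothing you have written rules out the collapsing option.

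The paper resolves this cleanly by contradiction via Lemma~\ref{lem:partial_petal_folding}: assume $z\notin L_\varphi$; then since $x,y\in L_\varphi$ that lemma forces $\varphi(W_{\{x,z\}})=W_{\{x\}}$ and $\varphi(W_{\{y,z\}})=W_{\{y\}}$, so $\varphi$ sends the $\mathbf j(x,y)$-arc $((\{\hat 0,x,z\},z),(\{\hat 0,y,z\},z))$ to $((\{\hat 0,x\},z),(\{\hat 0,y\},z))$, which is \emph{not} a $\mathbf j(x,y)$-arc. Your direct route can in fact be completed (combine the two $\mathbf i$-constraints with the list of $\mathbf j(x,y)$-arcs to exclude all cases but the fixed one), but that is precisely the content of Lemma~\ref{lem:partial_petal_folding} specialised to this situation, and you should either invoke it or reproduce its argument explicitly.
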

\begin{proof} To see that $L_{\varphi}$ is an ideal, pick $x,y\in L$ with $x<y$ and $y\in L_{\varphi}$. Since $(\{{\hat{0}},x,y\},y)$ is fixed by $\varphi$, the $\mathbf i(x,y)$-arc $((\{{\hat{0}},x,y\},y),(\{{\hat{0}},x\},y))$ is fixed by $\varphi$. By Lemma~\ref{lem:easy_lemma}, $x\in L_{\varphi}$.

To see that $L_{\varphi}$ is principal, let $x,y\in L_{\varphi}$ be two $\leq$-incomparable elements with $x<^*y$. Suppose for a contradiction that $z=x\vee y\notin L_{\varphi}$. By Lemma~\ref{lem:partial_petal_folding} the image by $\varphi$ of the $\mathbf j(x,y)$-arc $((\{{\hat{0}},x,z\},z),(\{{\hat{0}},y,z\},z))$ is $((\{{\hat{0}},x\},z),(\{{\hat{0}},y\},z))$, but this is not a $\mathbf j(x,y)$-arc.

We can define $\ell_{\varphi}=\max L_{\varphi}$.
\end{proof}

\begin{prop}\label{prop:Phi_surjective} Let $\varphi\in\End(D)$. Then $\varphi=\varphi_{\ell_{\varphi}}$.
\end{prop}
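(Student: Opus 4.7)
The plan is to verify $\varphi([Q]_x,x) = \varphi_{\ell_\varphi}([Q]_x,x)$ for every vertex of $D$. As a preliminary step I would observe that $\varphi$ preserves the position coordinate: every vertex $(P,y)$ is the tail of an outgoing $\mathbf s(y)$-arc, and by the definition of $A_{\mathbf s(y)}$ the tails of $\mathbf s(y)$-arcs lie precisely at position $y$, so $\varphi((P,y))$ must also lie at position $y$. Consequently it suffices to determine the first coordinate of $\varphi((P,y))$ for every vertex.

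Setting $w = \ell_\varphi$ and, for each $Q \in \mathcal Q'_{L^+}$, $Q^* = Q \cap \downarrow_{L^+} w$ when this differs from $\{0'\}$ and $Q^* = \{\hat{0}\}$ otherwise, the central claim is that $\varphi(W_Q) = W_{Q^*}$ as walks for every $Q \in \mathcal Q'_{L^+}$. Since these two walks have equal length $n$, this is equivalent to $\varphi([Q]_x,x) = ([Q^*]_x,x) = \varphi_w([Q]_x,x)$ for every $x \in L$, and the walks $W_Q$ collectively cover all vertices of $D$, so verifying this for every such $Q$ finishes the proof. I would proceed by strong induction on $|Q|$, in the inductive step for a fixed $|Q|$ treating first the chains starting with $\hat{0}$ and then the chains starting with $0'$.

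The cases $|Q|\leq 2$ follow from the earlier lemmas: Lemma \ref{lem:base_cycle_fixed} yields $\varphi(W_{\{\hat{0}\}})=W_{\{\hat{0}\}}$; a two-element chain $Q$ with top $q_2 \leq w$ is part of the fixed petal $W_{\{q_2\}}$ (since $q_2 \in L_\varphi$); and Lemma \ref{lem:petal_periphery} gives $\varphi(W_Q) = W_{\{\hat{0}\}}$ when $q_2 \not\leq w$. In the inductive step with $|Q|\geq 3$ and top $q_k \leq w$, one has $Q \subseteq \downarrow_{L^+} w$, so $Q^* = Q$, and $W_Q$ is once again a constituent of the petal $W_{\{q_k\}}$, which is fixed vertex by vertex because $q_k \in L_\varphi$. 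When $q_k \not\leq w$ and $q_1 = \hat{0}$, I would apply the inductive hypothesis to $W_{Q \setminus \{q_k\}}$ (a chain of size $k-1$ starting with $\hat{0}$ whose $(Q \setminus \{q_k\})^*$ equals $Q^*$ since $q_k \not\in \downarrow_{L^+} w$) and then exploit the $\mathbf i(q_{k-1},q_k)$-arc $((Q,q_k),(Q\setminus\{q_k\},q_k))$: under $\varphi$ it maps to some $\mathbf i(q_{k-1},q_k)$-arc whose endpoint is the already-known image of $(Q\setminus\{q_k\},q_k)$, and running through the list of arcs comprising $A_{\mathbf i(q_{k-1},q_k)}$ together with Lemma \ref{lem:easy_lemma} (which rules out $(Q,q_k)$ being fixed, since $q_k \not\in L_\varphi$) pins down $\varphi((Q,q_k))$. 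The remaining vertices of $W_Q$ are then obtained by propagation along the $\mathbf s$-structure of $W_Q$: every vertex of $W_Q$ has the form $(Q,y)$ with $Q \neq \{\hat{0}\}$ and hence a unique outgoing/incoming $\mathbf s(\cdot)$-arc in each relevant color by Remark \ref{rem:A_s-arcs}; and the $\mathbf r$-loops at $(Q,y)$ force the first coordinate of $\varphi((Q,y))$ either to equal $\{\hat{0}\}$ or to share its second-smallest element $q_2$ with $Q$, thereby eliminating the possible ambiguities at branch vertices $(\{\hat{0}\},z)$. When $q_k \not\leq w$ and $q_1 = 0'$, I would use instead the $\mathbf{c'}(q_2,q_k)$-arcs: their common tail $(Q\setminus\{0'\}\cup\{\hat{0}\},q_k)$ lies on the walk $W_{\{\hat{0},q_2,\ldots,q_k\}}$, a chain of the same size but starting with $\hat{0}$, and thus already handled by the previous subcase. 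Position preservation then matches the two $\mathbf{c'}(q_2,q_k)$-arcs out of the image tail with $\varphi((Q,\hat{0}))$ and $\varphi(([Q]_{\hat{1}},\hat{1}))$, and the propagation along $W_Q$ proceeds as before.

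The main technical obstacle is exactly this walk-propagation step, since the $\mathbf s(z)$-out-degree equals $2$ at branch vertices $(\{\hat{0}\},z)$: the careful combination of $\mathbf r$-loops and, in the limiting sub-cases where $|Q^*|\leq 2$, of the $K_{\mathbf h}$-arcs used in the proof of Lemma \ref{lem:partial_petal_folding}, is what prevents the wrong branch from being taken and forces $\varphi(W_Q)$ to coincide with $W_{Q^*}$ vertex by vertex.
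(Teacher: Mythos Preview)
Your overall architecture matches the paper's proof: induction on $|Q|$, base cases via Lemmas~\ref{lem:base_cycle_fixed} and~\ref{lem:petal_periphery}, the inductive step via the $\mathbf i(q_{k-1},q_k)$-arc to pin down $\varphi((Q,q_k))$, and the passage from $Q_1=\hat 0$ to $Q_1=0'$ via the $\mathbf{c'}$-arcs. So the strategy is right.

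There is, however, a concrete gap in your propagation step when $Q_1=\hat 0$, $|Q|\geq 4$, and $|Q^*|=1$ (equivalently $q_2\not\leq\ell_\varphi$). You claim that $\mathbf r$-loops together with the $K_{\mathbf h}$-arcs from Lemma~\ref{lem:partial_petal_folding} resolve all branch ambiguities. They do not. Once backward propagation from $\varphi((Q,q_k))=(\{\hat 0\},q_k)$ reaches $(\{\hat 0\},q_2)$, the next backward step has \emph{two} $\mathbf s$-in-neighbours, namely $(\{\hat 0\},L_{m-1})$ and $(\{0',q_2\},L_{m-1})$, and both carry exactly the $\mathbf r(x)$-loops for $x\neq q_2$ that the source $(Q,L_{m-1})$ has; the $\mathbf r$-loops cannot separate them. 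The $K_{\mathbf h}$-arcs only touch vertices $(\{\hat 0,x,y\},\hat 0)$ with three-element first coordinate, so for $|Q|\geq 4$ no $\mathbf h$-arc is incident with $(Q,\hat 0)$ and your appeal to Lemma~\ref{lem:partial_petal_folding} gives nothing here.

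This is precisely where the paper invokes the $K_{\mathbf c}$-arcs, which you never mention: the $\mathbf c(q_2,q_k)$-arc $((Q,q_k),(Q,\hat 0))$ maps to the unique $\mathbf c(q_2,q_k)$-arc out of $(\{\hat 0\},q_k)$, namely $((\{\hat 0\},q_k),(\{\hat 0\},\hat 0))$, yielding $\varphi((Q,\hat 0))=(\{\hat 0\},\hat 0)$ directly. With this anchor at position $\hat 0$, forward propagation (up to $q_2$) and backward propagation (down to $q_2$) via $\mathbf r$-loops then meet and determine the whole walk. Adding this single ingredient closes the gap and your proof then coincides with the paper's.
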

\begin{proof} Let $Q\in\mathcal Q'_{L^+}$ and $y\in L$; we will see that $\varphi(([Q]_y,y))=\varphi_{\ell_{\varphi}}(([Q]_y,y))$. Distinguish three cases.

\setcounter{case}{0}
\begin{case} $Q^1\in L_{\varphi}$.
\end{case}
Then it is clear that $\varphi(([Q]_y,y))=([Q]_y,y)=([Q\cap\!\downarrow_{L^+}\!\ell_{\varphi}]_y,y)=\varphi_{\ell_{\varphi}}(([Q]_y,y))$.

\begin{case} $Q^1\notin L_{\varphi}$ and $|Q|=2$.
\end{case}
The conclusion is obtained by applying Lemma~\ref{lem:petal_periphery}.

\begin{case} $Q^1\notin L_{\varphi}$ and $|Q|\geq 3$.
\end{case}
Assume that $Q_1={\hat{0}}$ and let $Q'=Q\cup\{0'\}\setminus\{{\hat{0}}\}$. Let $a$ be the $\mathbf i(Q^2,Q^1)$-arc $((Q,Q^1),(Q\setminus\{Q^1\},Q^1))$. By induction, $\varphi((Q\setminus\{Q^1\},Q^1))=(Q\setminus\{Q^1\}\;\cap\!\downarrow_{L^+}\!\ell_{\varphi}, Q^1)$. Let $i$ be the largest integer such that $Q_i\in\;\downarrow_{L^+}\!\ell_{\varphi}$. Note that $\varphi(a)=((\{Q_1,...,Q_i\},Q^1),(\{Q_1,...,Q_i\},Q^1))$ by Lemma~\ref{lem:easy_lemma}. Let $W_1,W_2$ be the shortest $K_{\mathbf s}$-walks from $(Q,{\hat{0}})$ to $(Q,Q^1)$ and from $(Q,Q^1)$ to $(Q',\hat 1)$, respectively. Consider the following subcases (see Figure~\ref{fig:analysis_of_cases}).
\begin{itemize}
    \item[] \textit{Subcase (i):} $i\geq 3$. It is clear that the images of $W_1,W_2$ by $\varphi$ are already determined by the image of $(Q,Q^1)$; in particular, $\varphi((Q,y))=(\{Q_1,...,Q_i\},y)$ and $\varphi((Q',y))=(\{0',Q_2,...,Q_i\},y)$. 
    \item[] \textit{Subcase (ii):} $i=2$. Recall that for every $x\in L$ different from $Q_2$ there is an $\mathbf r(x)$-loop on each vertex of the form $([Q]_z,z)$ or $([Q']_z,z)$, where $z\in L$. Together with Lemma~\ref{lem:anticolors}, this determines $\varphi(W_1),\varphi(W_2)$; in particular, $\varphi((Q,y))=([\{{\hat{0}},Q_2\}]_y,y)$ and $\varphi((Q',y))=([\{0',Q_2\}]_y,y)$. 
    \item[] \textit{Subcase (iii):}  $i=1$. In this case $\varphi((Q,Q^1))=(\{{\hat{0}}\},Q^1)$ implies that the image by $\varphi$ of the $\mathbf c(Q_2,Q^1)$-arc $((Q,Q^1),([Q]_{\hat{0}},{\hat{0}}))$ is $((\{{\hat{0}}\},Q^1),(\{{\hat{0}}\},{\hat{0}}))$. With the argument of the $K_{\mathbf r}$-arcs used in Subcase (ii) we conclude that $\varphi(W_{Q})=W_{\{{\hat{0}}\}}$; in particular $\varphi(([Q]_y,y))=(\{{\hat{0}}\},y)$. On the other hand, the image by $\varphi$ of the $\mathbf{c'}(Q_2,Q^1)$-arcs $((Q,Q^1),(Q',{\hat{0}}))$ and $((Q,Q^1),([Q']_{\hat{1}},{\hat{1}}))$ is $((\{{\hat{0}}\},Q^1),(\{{\hat{0}}\},{\hat{0}}))$ and $((\{{\hat{0}}\},Q^1),(\{{\hat{0}}\},{\hat{1}}))$. Therefore $\varphi(W_{Q'})=W_{\{{\hat{0}}\}}$, and in particular $\varphi(([Q']_y,y))=(\{{\hat{0}}\},y)$.
\end{itemize}
\end{proof}
\begin{figure}[h]
\centering
\includegraphics[scale=1]{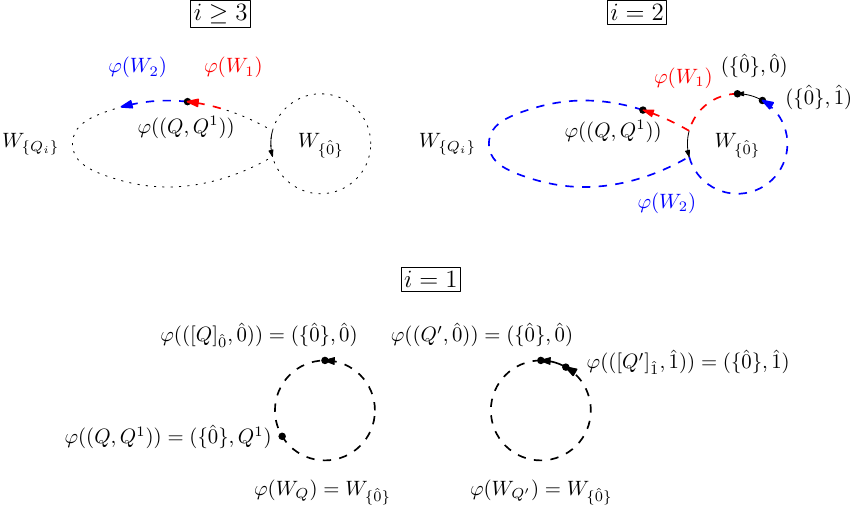}
\caption{The three subcases when $Q^1\notin L_{\varphi}$ and $|Q|\geq 3$ in Proposition~\ref{prop:Phi_surjective}. The solid lines denote actual arcs, the dotted and dashed ones denote walks.}\label{fig:analysis_of_cases}
\end{figure}

We are ready to show that $L$ is isomorphic to the endomorphism monoid of $D$. The next proposition, together with Remarks~\ref{rem:A_s-arcs}, \ref{rem:A_r-arcs}, \ref{rem:A_c-arcs}, \ref{rem:A_c'-arcs}, \ref{rem:A_h-arcs}, \ref{rem:A_i-arcs} and \ref{rem:A_j-arcs}, ends the proof of Lemma~\ref{lem:first_step}.

\begin{prop} The mapping $\Phi:(L,\wedge)\rightarrow\End(D)$ defined by $\Phi(x)=\varphi_x$ is a monoid isomorphism.
\end{prop}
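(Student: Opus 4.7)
The proof breaks into four checks: preservation of the unit, multiplicativity, surjectivity, and injectivity. Surjectivity is already Proposition~\ref{prop:Phi_surjective}, which states that every $\varphi\in\End(D)$ equals $\varphi_{\ell_\varphi}$. For the unit, $\hat 1$ is the neutral element of $(L,\wedge)$, and since $\downarrow_{L^+}\hat 1=L^+$, one has $Q\cap\downarrow_{L^+}\hat 1=Q$ for every $Q\in\mathcal{Q}'_{L^+}$, so $\varphi_{\hat 1}=\mathrm{id}_D$.

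The core task is to prove $\ell_{\varphi_w}=w$ for every $w\in L$; this will immediately give injectivity and will drive the proof of multiplicativity. The observation behind this is that for each $z\in L$ with $z>\hat 0$, the petal $W_{\{z\}}$ contains the vertex $v_z:=(\{\hat 0,z\},z)$, which sits inside the constituent walk $W_{\{\hat 0,z\}}$ in the recursive decomposition from Lemma~\ref{lem:petals}. Applying Definition~\ref{defi:varphi_x} gives $\varphi_w(v_z)=([\{\hat 0,z\}\cap\downarrow_{L^+}w]_z,z)$; this equals $v_z$ exactly when $z\leq w$, and otherwise the intersection collapses to $\{\hat 0\}$ and $v_z$ is moved to $(\{\hat 0\},z)$. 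Combined with the fact that $\hat 0\in L_\varphi$ unconditionally (Lemma~\ref{lem:base_cycle_fixed}), this yields $L_{\varphi_w}=\downarrow_L w$ and hence $\ell_{\varphi_w}=w$. Injectivity of $\Phi$ is immediate: if $\varphi_x=\varphi_y$, then $x=\ell_{\varphi_x}=\ell_{\varphi_y}=y$.

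For multiplicativity, the plan is to prove $\varphi_x\circ\varphi_y=\varphi_{x\wedge y}$. Since $\varphi_x\circ\varphi_y\in\End(D)$, Proposition~\ref{prop:Phi_surjective} identifies the composition with $\varphi_w$ for $w=\ell_{\varphi_x\circ\varphi_y}$, so the goal reduces to showing $\ell_{\varphi_x\circ\varphi_y}=x\wedge y$. Using the same witness vertex $v_z$: if $z\leq x\wedge y$ then $\varphi_y$ fixes $W_{\{z\}}$ pointwise, and so does $\varphi_x$, hence $z\in L_{\varphi_x\circ\varphi_y}$; conversely, if $z\not\leq x\wedge y$, say $z\not\leq y$, then $\varphi_y(v_z)=(\{\hat 0\},z)$, which $\varphi_x$ subsequently fixes, so the composition moves $v_z$ off $W_{\{z\}}$ and $z\notin L_{\varphi_x\circ\varphi_y}$. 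Therefore $L_{\varphi_x\circ\varphi_y}=\downarrow_L(x\wedge y)$ and the identification holds.

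The one thing requiring care is to make sure that the formula for $\varphi_w$ really outputs $(\{\hat 0,z\}\cap\downarrow_{L^+}w,z)$ at $v_z$ rather than triggering the exceptional ``$\{0'\}$'' branch, and that $v_z$ genuinely belongs to $W_{\{z\}}$. Both are unproblematic: the chain $\{\hat 0,z\}$ contains $\hat 0$, so its intersection with any $\downarrow_{L^+}w$ contains $\hat 0$ and can never equal $\{0'\}$; and $v_z$ appears explicitly as the vertex with second coordinate $z$ in the walk $W_{\{\hat 0,z\}}$ from the recursive construction of $W_{\{z\}}$.
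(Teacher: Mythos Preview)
Your proof is correct, but your route to multiplicativity differs from the paper's. The paper verifies $\varphi_x\circ\varphi_y=\varphi_{x\wedge y}$ by a direct computation from Definition~\ref{defi:varphi_x}: for an arbitrary vertex $([Q]_z,z)$ it works through the cases $Q\cap\downarrow_{L^+}y\neq\{0'\}$ versus $=\{0'\}$ and checks the result agrees with $\varphi_{x\wedge y}$, using $\downarrow_{L^+}x\cap\downarrow_{L^+}y=\downarrow_{L^+}(x\wedge y)$. You instead leverage Proposition~\ref{prop:Phi_surjective}: since $\varphi_x\circ\varphi_y\in\End(D)$, it equals $\varphi_w$ for $w=\ell_{\varphi_x\circ\varphi_y}$, and you then identify $w$ by testing which petals are fixed via the witness vertices $v_z$. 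Your approach is more structural and avoids the case analysis, at the cost of invoking the heavier Proposition~\ref{prop:Phi_surjective}; the paper's direct computation is self-contained and uses only the definition. For injectivity both arguments use the same witness vertex $(\{\hat 0,y\},y)$; the paper states it bare, you package it as $\ell_{\varphi_w}=w$.

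Two small remarks. First, when you write ``say $z\not\leq y$'' in the multiplicativity step, the other subcase $z\leq y$, $z\not\leq x$ is not symmetric in the composition order: there $\varphi_y$ fixes $v_z$ and it is $\varphi_x$ that moves it to $(\{\hat 0\},z)$; you should at least remark that this case is handled the same way. Second, the implication ``$\varphi_w(v_z)=v_z\Rightarrow z\in L_{\varphi_w}$'' silently uses Lemma~\ref{lem:easy_lemma} (since $v_z\notin V(W_{\{\hat 0\}})$), and the converse uses that $\varphi(W_{\{z\}})=W_{\{z\}}$ forces the walk to be fixed vertex-by-vertex; both are routine but worth citing.
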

\begin{proof} Note that by Proposition~\ref{prop:Phi_well_defined} the mapping $\Phi$ is well defined. It is clear from Definition~\ref{defi:varphi_x} that it is injective: let $x,y\in L$ with $x<^*y$ and $v=(\{{\hat{0}},y\},y)\in V$; then $\varphi_x(v)\neq v=\varphi_y(v)$. By Proposition~\ref{prop:Phi_surjective}, it is also surjective. Clearly $\Phi({\hat{1}})=\textrm{id}_{D}$. Finally, let $x,y,z\in L$ and $Q\in\mathcal Q'_{L^+}$. Then
\[\varphi_x(\varphi_y(([Q]_z,z)))=\]
{\small
\[\left.\begin{cases}\varphi_x(([Q\,\cap\!\downarrow_{L^+}\! y]_z,z))=\begin{cases}([Q\,\cap\!\downarrow_{L^+}\! x \,\cap\!\downarrow_{L^+}\! y]_z,z) \\ (\{{\hat{0}}\},z)\end{cases} & \!\!\!\!\!\!\!\begin{matrix}\text{if }Q\,\cap\!\downarrow_{L^+}\! x\,\cap\!\downarrow_{L^+}\! y\neq\{0'\} \hfill \\ \text{if } Q\,\cap\!\downarrow_{L^+}\! y\neq\{0'\}= Q\,\cap\!\downarrow_{L^+}\! x\,\cap\!\downarrow_{L^+}\! y\end{matrix}\\ \varphi_x((\{{\hat{0}}\},z))=(\{{\hat{0}}\},z) &\!\!\!\!\!\!\! \text{if } Q\,\cap\!\downarrow_{L^+}\! y=\{0'\} \end{cases}\right\}=\]
}

\[\left.\begin{cases}([Q\,\cap\!\downarrow_{L^+}\! (x\wedge y)]_z,z) &\text{if } Q\,\cap\!\downarrow_{L^+}\! (x\wedge y)\neq\{0'\} \\ (\{{\hat{0}}\},z) & \text{otherwise}\end{cases}\right\}=\varphi_{x\wedge y}(([Q]_z,z)).\]
\end{proof}

One of the main results of the paper follows from Lemmas~\ref{lem:first_step},~\ref{lem:second_step}, and~\ref{lem:third_step}:
\begin{teo}\label{teo:mainboundeddegree}
For every finite lattice $L$ there exists a simple undirected graph $G$ of maximum degree at most $3$ such that $\End(G)\cong L$.
\end{teo}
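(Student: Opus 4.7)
The plan is to chain together the three reduction lemmas established in the paper, which together form an end-to-end pipeline from a lattice to a simple cubic graph realizing it. Concretely, I would proceed as follows.

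First, apply Lemma~\ref{lem:first_step} to the given lattice $L$ to obtain an arc-colored digraph $D$ with $\End(D)\cong L$ and $\Delta^{\pm}(D)\leq 2$. This is the hard step of the whole argument: it is the content of essentially all of Section~\ref{subsec:boundeddegree}, and the delicate part is the construction of the $K_{\mathbf{s}}$-arcs (forming the petals $W_{\{x\}}$) together with the auxiliary colors $K_{\mathbf{r}},K_{\mathbf{c}},K_{\mathbf{c'}},K_{\mathbf{h}},K_{\mathbf{i}},K_{\mathbf{j}}$ that rigidify the structure so that endomorphisms are exactly the contractions $\varphi_{w}$. Since this is already a proved lemma, I would just invoke it here.

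Next, feed $D$ into Lemma~\ref{lem:second_step} (the blow-up construction of Section~\ref{sec:blowup}) to obtain a loopless arc-colored digraph $D'$ with $\End(D')\cong \End(D)\cong L$, $\delta^{+}(D')=\delta^{-}(D')=1$, and
\[\Delta(D')\;\leq\;\max\bigl(\Delta^{\pm}(D)+1,\,3\bigr)\;\leq\;\max(2+1,3)\;=\;3.\]
Notice that it is precisely the bound $\Delta^{\pm}(D)\leq 2$ from the previous step that makes the blow-up land at degree $3$ rather than higher; this is why the first-step lemma is tuned to produce $\Delta^{\pm}\leq 2$ and not merely bounded degree.

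Finally, since $D'$ is loopless with $\delta^{+}(D'),\delta^{-}(D')\geq 1$, it satisfies the hypotheses of Lemma~\ref{lem:third_step}. Applying the \v{s}ip-product construction (with Heldrl\'in--Pultr gadgets of sufficiently large girth to accommodate the color set) produces a simple undirected graph $G=\check{D}'_{k}$ with $\End(G)\cong \End(D')\cong L$ and
\[\Delta(G)\;=\;\max(\Delta(D'),3)\;=\;3,\]
which is exactly the conclusion of Theorem~\ref{teo:mainboundeddegree}. The main obstacle was entirely absorbed into Lemma~\ref{lem:first_step}; once that is in hand, the theorem is just a two-line composition of the three lemmas, and the numerical match ($2\mapsto 3\mapsto 3$) of the degree bounds is what makes the chain work to give the optimal bound of $3$.
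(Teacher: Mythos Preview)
Your proposal is correct and is exactly the paper's own proof: the theorem is stated immediately after the sentence ``One of the main results of the paper follows from Lemmas~\ref{lem:first_step},~\ref{lem:second_step}, and~\ref{lem:third_step},'' and the composition you describe, with the degree arithmetic $\Delta^{\pm}(D)\leq 2 \Rightarrow \Delta(D')\leq 3 \Rightarrow \Delta(G)=3$, is precisely what is intended.
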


\subsection{Forcing a minor}\label{subsec:forcingaminor}
The main result of this subsection is that no class excluding a minor can represent all lattices (Theorem~\ref{teo:forcingaminor}). 

Recall the definitions of retracts and in particular Lemma~\ref{lem:End_is_lattice}, which in this section we will exclusively use for an undirected graph $G$.
If $\End(G)\cong L$, then Lemma~\ref{lem:End_is_lattice} justifies the following notation: If $\ell\in L$, then $R(\ell)\in\mathcal{R}_G$ is the corresponding retract of $G$. An element $\ell\in L$ is called \emph{join-irreducible} if $\bigvee I=\ell$ implies $\ell\in I$ for all $I\subseteq L$. Denote by $\mathcal{J}(L)$ the subposet of $L$ induced by its join-irreducible elements. Note that an element of $L$ is join-irreducible if and only if it covers a unique element. An elementary property that we will sometimes use is $\ell=\bigvee\{x\in \mathcal{J}(L)\mid x\leq \ell\}$  for all $\ell\in L$.

\begin{lema}\label{lem:nonempty_connected}

 Let $L$ be a lattice and $G$ a graph such that $\End(G)\cong L$. 
 Let $y \in\mathcal{J}(L)$ and $C=\{x_1, \ldots, x_k\}\subseteq\mathcal{J}(L)$ the elements covered by $ y$ in $\mathcal{J}(L)$. We have that $R( y)\setminus R(\bigvee C)$ is non-empty and induces a connected subgraph of $G$.   
\end{lema}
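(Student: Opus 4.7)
The plan is to use Lemma~\ref{lem:End_is_lattice} throughout, identifying $(L,\leq)$ with $(\mathcal{R}_G,\subseteq)$ and $\wedge$ with intersection of retracts. Write $\pi_\ell\in\End(G)$ for the retraction of $G$ onto $R(\ell)$.

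For non-emptiness: every $x_i\leq y$, so $\bigvee C\leq y$; equality would force $y\in C$ by join-irreducibility of $y$, contradicting that every element of $C$ is strictly below $y$. Hence $\bigvee C<y$, equivalently $R(\bigvee C)\subsetneq R(y)$.

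For connectedness I would argue by contradiction. Suppose $R(y)\setminus R(\bigvee C)=A\sqcup B$ with $A,B$ non-empty and no edges of $G$ between $A$ and $B$. Define $\varphi_A\colon V(G)\to V(G)$ by
\[
\varphi_A(v)=\begin{cases}\pi_y(v) & \text{if }\pi_y(v)\in R(\bigvee C)\cup A,\\ \pi_{\bigvee C}(v) & \text{if }\pi_y(v)\in B,\end{cases}
\]
and define $\varphi_B$ symmetrically. The first key step is to show $\varphi_A\in\End(G)$ through a case analysis on an edge $\{u,v\}$ of $G$ according to where $\pi_y(u),\pi_y(v)$ lie. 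The case with $\pi_y(u)\in A$ and $\pi_y(v)\in B$ is vacuous by hypothesis; the cases where both $\pi_y$-images lie on the same side $R(\bigvee C)\cup A$ or $R(\bigvee C)\cup B$ follow directly from $\pi_y,\pi_{\bigvee C}\in\End(G)$. The subtle mixed case, say $\pi_y(u)\in R(\bigvee C)$ and $\pi_y(v)\in B$, is handled via the identity $\pi_{\bigvee C}\circ\pi_y=\pi_{\bigvee C}$ (a consequence of $\bigvee C\wedge y=\bigvee C$ and Lemma~\ref{lem:End_is_lattice}): it gives $\pi_y(u)=\pi_{\bigvee C}(\pi_y(u))=\pi_{\bigvee C}(u)$, so $\{\varphi_A(u),\varphi_A(v)\}=\{\pi_{\bigvee C}(u),\pi_{\bigvee C}(v)\}$ is an edge.

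Once $\varphi_A\in\End(G)$, a short check shows that it is idempotent with image exactly $R(\bigvee C)\cup A$, hence a retraction. By Lemma~\ref{lem:End_is_lattice} there are $\ell_A,\ell_B\in L$ with $R(\ell_A)=R(\bigvee C)\cup A$ and $R(\ell_B)=R(\bigvee C)\cup B$. Since $A$ and $B$ are non-empty, $\ell_A,\ell_B<y$; yet $R(\ell_A)\cup R(\ell_B)=R(y)$ is itself a retract, so the smallest retract containing both must be $R(y)$, giving $\ell_A\vee\ell_B=y$. This contradicts the join-irreducibility of $y$. The main obstacle is verifying that the patched map $\varphi_A$ is an endomorphism in the mixed edge case above; the rest is routine bookkeeping via the retract-lattice correspondence.
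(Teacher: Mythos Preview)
Your proof is correct and rests on the same core idea as the paper's---produce a retract strictly between $R(\bigvee C)$ and $R(y)$---but the execution differs in two noteworthy ways. First, the paper defines its patched map only on $R(y)$: it takes the retraction $\varphi\colon R(y)\to R(\bigvee C)$, sets $\varphi'(v)=\varphi(v)$ for $v\in A$ and $\varphi'(v)=v$ otherwise, and observes that $\varphi'$ is a homomorphism because $\varphi$ is the identity on $R(\bigvee C)$; composing with $\pi_y$ then gives the global endomorphism. This sidesteps precisely the ``subtle mixed case'' you flagged as the main obstacle, since within $R(y)$ the map is just ``fold $A$, fix the rest''. Second, the paper first proves the covering relation $\bigvee C\prec y$ in $L$ (using that every element is the join of the join-irreducibles below it), so a single intermediate retract $R(\bigvee C)\cup B$ already yields a contradiction. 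You instead build \emph{two} intermediate retracts $R(\bigvee C)\cup A$ and $R(\bigvee C)\cup B$ and derive $\ell_A\vee\ell_B=y$ with $\ell_A,\ell_B<y$, contradicting join-irreducibility directly; this lets you avoid establishing $\bigvee C\prec y$. Each route buys a small simplification at the cost of the other's extra step.
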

\begin{proof}
 Since $x_1, \ldots, x_k\leq y$, we have $\bigvee C\leq y$. Since $y$ is join-irreducible and $y\notin C$, we have $\bigvee C\neq y$. Moreover, we have that if $\bigvee C\leq z\leq y$, since $z$ is the join of all join-irreducibles below it, if $z\neq y$ then $z=\bigvee C$. We conclude $\bigvee C\prec y$. Note that the above arguments also hold if $C=\varnothing$.
 
 By Lemma~\ref{lem:End_is_lattice} this implies $R(y)\setminus R(\bigvee C)\neq \varnothing$. 
 Suppose now that $R( y)\setminus R(\bigvee C)$ has a connected component $A$ such that $B=(R( y)\setminus R(\bigvee C))\setminus A\neq \varnothing$. Consider the retraction $\varphi:R( y)\to R(\bigvee C)$ and define $\varphi':R( y)\to R(\bigvee C)\cup B$ as 
 $$v\mapsto\begin{cases}
            \varphi(v) & \text{if } v\in A,\\
            v & \text{otherwise.}
\end{cases}$$
Since there are no edges from $A$ to $B$, $\varphi'$ is a retraction from $R(y)$ to $R(\bigvee C)\cup B$. Hence, its image $R(\bigvee C)\cup B$ is a retract of $G$. However, $R(\bigvee C)\subsetneq R(\bigvee C)\cup B\subsetneq R( y)$ together with Lemma~\ref{lem:End_is_lattice} contradicts $\bigvee C\prec y$.
\end{proof}

If $\End(G)\cong L$, then Lemma~\ref{lem:nonempty_connected}, implies that the following is well-defined. Let $y\in\mathcal{J}(L)$ and $C\subseteq\mathcal{J}(L)$ the elements covered by $y$ in $\mathcal{J}(L)$, then its \emph{private part} $P(y)$ is the connected component of $R(y)\setminus \bigcup_{x\in C}R(x)$ containing $R( y)\setminus R(\bigvee C)$. Note that $\bigcup_{x\in C}R(x)\subseteq R(\bigvee C)$.

%This is equivalent to say that $\mathcal J(L)$ is a lattice {\color{blue}\emph{[not sure if it's better "lattice" or "meet-semilattice". Same for the later ocurrences]}} (with respect to the order relation induced by $(L,\leq)$). 

For the next result we restrict to lattices such that their join-irreducible poset forms a lattice as well. 
\begin{lema}\label{lem:privatedisjoint}
Let $L$ be a lattice such that $\mathcal J(L)$ is a lattice and $G$ a graph such that $\End(G)\cong L$. 
If $y,z\in\mathcal{J}(L)$ are different, then $P(y)$ and $P(z)$ are disjoint.
\end{lema}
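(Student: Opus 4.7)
The plan is to argue by contradiction: suppose some $v \in P(y)\cap P(z)$. Since $P(y) \subseteq R(y)$ and $P(z) \subseteq R(z)$, Lemma~\ref{lem:End_is_lattice} gives $v \in R(y)\cap R(z) = R(y\wedge z)$, where $\wedge$ denotes the meet in $L$. The strategy is to exhibit an $x$ covered by $y$ (or by $z$) in $\mathcal{J}(L)$ with $v \in R(x)$, which will contradict membership of $v$ in the private part $P(y)$ (respectively $P(z)$).

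The key step, and the only place where the hypothesis that $\mathcal J(L)$ is a lattice intervenes, is to show that for any $y, z \in \mathcal{J}(L)$ the $L$-meet $y\wedge z$ agrees with the $\mathcal{J}(L)$-meet $y\wedge_{\mathcal J(L)} z$, and in particular is itself join-irreducible. Using the identity $\ell = \bigvee\{u \in \mathcal{J}(L) \mid u \leq \ell\}$ recalled at the start of the section, one has
\[
y\wedge z \;=\; \bigvee\{u\in\mathcal{J}(L)\mid u\leq y,\ u\leq z\}.
\]
Every $u$ in this set satisfies $u\leq y\wedge_{\mathcal J(L)} z$ in $\mathcal{J}(L)$, so the set coincides with $\downarrow_{\mathcal J(L)}(y\wedge_{\mathcal J(L)} z)$, which has $y\wedge_{\mathcal J(L)} z$ as its maximum. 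The join in $L$ of a set with a maximum equals its maximum, so $y\wedge z = y\wedge_{\mathcal J(L)} z \in \mathcal J(L)$.

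With this identification in hand, I would split into two cases. If $y$ and $z$ are comparable in $\mathcal{J}(L)$, say $y < z$, then since $\mathcal J(L)$ is finite $y$ lies below some maximal element of $\downarrow_{\mathcal J(L)} z \setminus \{z\}$, i.e.\ $y \leq x$ for some $x$ covered by $z$ in $\mathcal{J}(L)$; hence $v\in R(y)\subseteq R(x)$, contradicting $v\in P(z)$. If instead $y$ and $z$ are incomparable, set $w = y\wedge z = y\wedge_{\mathcal J(L)} z$; then $w < y$ in $\mathcal J(L)$, so $w \leq x$ for some $x$ covered by $y$ in $\mathcal{J}(L)$, and $v\in R(w)\subseteq R(x)$ contradicts $v\in P(y)$.

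The main obstacle is the identification $y\wedge z = y\wedge_{\mathcal J(L)} z$ for $y,z\in\mathcal{J}(L)$; once this join-irreducibility of the meet is in place, the cover-saturation argument in a finite lattice makes the contradictions immediate.
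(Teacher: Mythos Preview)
Your proof is correct and follows essentially the same route as the paper: the same case split (comparable vs.\ incomparable), the same identification $y\wedge_L z = y\wedge_{\mathcal J(L)} z$ via $\ell = \bigvee\{u\in\mathcal J(L)\mid u\leq \ell\}$, and the same cover-saturation step to land in some $R(x)$ with $x$ covered by $y$ or $z$ in $\mathcal J(L)$. The only cosmetic difference is that you phrase it as a contradiction starting from a putative $v\in P(y)\cap P(z)$, whereas the paper argues disjointness directly.
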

\begin{proof}
If $y<z$ and $C\subseteq \mathcal{J}(L)$ is the set of elements covered by $z$ in $\mathcal{J}(L)$, then $P(z)\subseteq R(z)\setminus \bigcup_{x\in C}R(x)$ and since $y\leq x$ for some $x\in C$ we have $P(y)\subseteq R(x)$. Hence $P(y)$ and $P(z)$ are disjoint.

%{\color{gray} 
%If $y\parallel z$ are incomparable, then by Lemma~\ref{lem:End_is_lattice} $R(y)\cap R(z)=R(y\wedge z)$. But since $y\wedge z<y,z$, both $y$ and $z$ cover some element $x$ and $x'$, respectively such that $y\wedge z\leq x,x'$ and $R(y\wedge z)\subseteq R(x)\cap R(x')$ {\color{gray}[currently stuck here. Couldn't happen that $x$ is not in $\mathcal J(L)$?]}. By definition, $P(y)\cap R(x)=\varnothing$ and $P(z)\cap R(x')=\varnothing$. Thus, $P(y)\cap P(z)=\varnothing$.
%}

%Indeed, let $y,z\in\mathcal J(L)$. Then $y\wedge_L z=\bigvee \{x\in\mathcal J(L)\mid x\leq y\wedge_L z\}=\bigvee \{x\in\mathcal J(L)\mid x\leq y,z\}=\bigvee \{x\in\mathcal J(L)\mid x\leq y\wedge_{\mathcal J(L)} z\}=y\wedge_{\mathcal J(L)} z\in\mathcal J(L)$.

%{\color{blue}
If $y\parallel z$ are incomparable, then by Lemma~\ref{lem:End_is_lattice} $R(y)\cap R(z)=R(y\wedge_L z)$. Since $\mathcal J(L)$ is a lattice, we have $y\wedge_L z=y\wedge_{\mathcal J(L)} z\in\mathcal J(L)$. Indeed, $y\wedge_L z=\bigvee \{x\in\mathcal J(L)\mid x\leq y\wedge_L z\}=\bigvee \{x\in\mathcal J(L)\mid x\leq y,z\}=\bigvee \{x\in\mathcal J(L)\mid x\leq y\wedge_{\mathcal J(L)} z\}=y\wedge_{\mathcal J(L)} z\in\mathcal J(L)$. Since $y\wedge z <y,z$, both $y$ and $z$ cover in $\mathcal J(L)$ some element $x$ and $x'$, respectively, such that $y\wedge z\leq x,x'$ and $R(y\wedge z)\subseteq R(x)\cap R(x')$. By definition, $P(y)\cap R(x)=\varnothing$ and $P(z)\cap R(x')=\varnothing$. Thus, $P(y)\cap P(z)=\varnothing$.
%}
\end{proof}

For the next lemma we need more structure on $L$. A lattice is \emph{distributive} if $x\wedge(y\vee z)=(x\wedge y)\vee(x\wedge z)$ for all $x, y, z\in L$. Birkhoff's Fundamental Theorem of Finite Distributive Lattices~\cite{B37}
says that $L$ is distributive if and only if $L\cong (\mathcal{I}(P),\subseteq)$, where  $\mathcal{I}(P)$ denotes the set of ideals of a poset $P$. Moreover, the unique such $P$ up to isomorphism is $\mathcal{J}(L)$. Every element $x\in P$ is identified with $\downarrow x\in \mathcal{I}(P)$.

An \emph{interval} in $P$ is a subset of the form $[x,z]=\{y\in P\mid x\leq y \leq z\}$ for elements $x\leq z$. The \emph{length} of an interval is the length of a shortest inclusion maximal chain. We call a poset $P$ \emph{thick} if every interval of length $2$ has least $4$ elements. See Figure~\ref{fig:thick} for an illustration.

\begin{figure}[h]
\centering
\includegraphics[width=.3\textwidth]{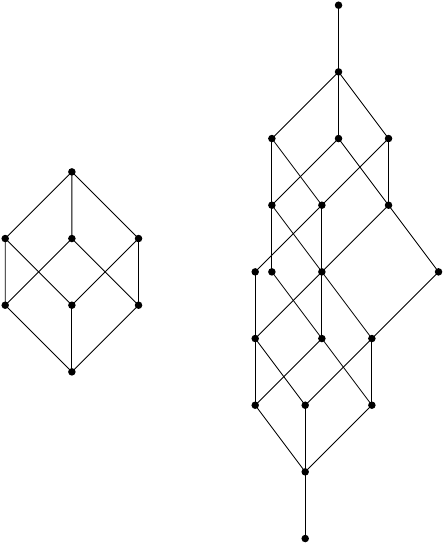}
\caption{The thick lattice $\mathcal{B}_3$ and the lattice of abstract simplicial complexes on $3$ elements $\mathcal{I}(\mathcal{B}_3)$.}\label{fig:thick}
\end{figure}

\begin{lema}\label{lem:edges}
Let $L$ be a distributive lattice such that $\mathcal{J}(L)=P$ is a thick lattice and $G$ a graph with $\End(G)\cong L$. If $x,y\in P$ such that $x\prec y$ in $P$, then there is an edge between $P(x)$ and $P(y)$.
\end{lema}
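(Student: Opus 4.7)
The plan is to argue by contradiction. Assume no edge exists between $P(x)$ and $P(y)$. Let $C=C_y$ denote the set of covers of $y$ in $P$, so that $x \in C$. Via the Birkhoff correspondence $L \cong \mathcal{I}(P)$, let $\ell_0 \in L$ correspond to the ideal $\downarrow_P y \setminus \{y,x\}$ of $P$; equivalently, $\ell_0 = \bigvee C_x \vee \bigvee(C \setminus \{x\})$ in $L$. Using the distributivity of $L$ one verifies that $\ell_0 \vee x = \bigvee C$, that $\ell_0 \wedge x = \bigvee C_x$, and that $R(\ell_0) \supseteq R(x_j)$ for every $x_j \in C \setminus \{x\}$. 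Set
\[
T \;:=\; R(\ell_0) \cup P(y) \;\subseteq\; R(y), \qquad S \;:=\; R(y) \setminus T.
\]
The goal is to show $T$ is a retract of $G$ and then to contradict Lemma~\ref{lem:End_is_lattice} by proving that no $\ell \in L$ can satisfy $R(\ell)=T$.

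First I would locate $S$. The computations above give $R(\ell_0) \cap R(x) = R(\bigvee C_x)$ and $R(x_j) \subseteq R(\ell_0)$ for $j\neq 1$, whence $S \cap R(x) = R(x)\setminus R(\bigvee C_x) = A(x) \subseteq P(x)$ and $S \cap R(x_j) = \emptyset$ for $x_j \in C\setminus\{x\}$; in particular $A(x) \subseteq S$. Define $\sigma:R(y)\to R(y)$ by $\sigma(w)=w$ for $w \in T$ and $\sigma(w)=\varphi_{\ell_0}(w)$ for $w \in S$, where $\varphi_{\ell_0}\in\End(G)$ is the retraction onto $R(\ell_0)$ corresponding to $\ell_0$. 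The only nontrivial case for $\sigma$ being a homomorphism is an edge $\{w,w'\}$ of $R(y)$ with $w \in T$ and $w' \in S$. If $w \in R(\ell_0)$ this is immediate from $\varphi_{\ell_0}$ being a homomorphism that fixes $w$. If $w \in P(y) \setminus R(\ell_0)$, then by the description of $S$ either $w' \in A(x) \subseteq P(x)$, contradicting the no-edge hypothesis, or $w' \notin \bigcup_{x_i \in C} R(x_i)$; in the latter case both endpoints of $\{w,w'\}$ lie in the induced subgraph $R(y)\setminus \bigcup_{x_i \in C}R(x_i)$, forcing $w'$ into the same connected component as $w$, namely $P(y)$, contradicting $w'\in S$. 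So no such edge exists, and $\sigma$ is a homomorphism on $R(y)$. Composing with the retraction $\varphi_y:G\to R(y)$ gives $\psi := \sigma \circ \varphi_y$, a homomorphism $G\to G$ with image $T$; since $\psi$ is the identity on $T$, it is idempotent, so $T$ is a retract of $G$.

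By Lemma~\ref{lem:End_is_lattice} there is a unique $\ell \in L$ with $T = R(\ell)$. As $T \subseteq R(y)$, $\ell \leq y$. Moreover $T \supseteq A(y) = R(y)\setminus R(\bigvee C)$, which is non-empty by Lemma~\ref{lem:nonempty_connected} and disjoint from $R(\bigvee C)$, so $R(\ell) \not\subseteq R(\bigvee C)$, i.e., $\ell \not\leq \bigvee C$. Since $y$ is join-irreducible in $L$ (as $y \in \mathcal{J}(L)$), every $\ell' \in L$ with $\ell' < y$ satisfies $\ell' \leq \bigvee C$. Hence $\ell = y$. But $A(x) \neq \emptyset$ by Lemma~\ref{lem:nonempty_connected} and $A(x) \subseteq S$, so $T \subsetneq R(y) = R(\ell)$, contradicting $T = R(\ell)$. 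This contradiction proves the lemma.

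The main obstacle is the verification that the "folding" map $\sigma$ is a homomorphism; this is exactly where the no-edge hypothesis gets used, in tandem with the fact that $R(\ell_0)$ absorbs $R(x_j)$ for every other cover $x_j \in C \setminus \{x\}$ of $y$, so that any cross-edge from the unfolded piece $P(y) \setminus R(\ell_0)$ is pinned down to land in $A(x) \subseteq P(x)$, which is forbidden by assumption.
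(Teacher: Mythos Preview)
Your argument is correct. Both your proof and the paper's follow the same core strategy: assuming no edge, build an ``illicit'' retract of $G$ by keeping part of $R(y)$ fixed and folding the remainder onto $R(\ell_0)$ via the canonical retraction, where $\ell_0$ corresponds to the ideal $\downarrow_P y\setminus\{x,y\}$ (this is exactly the paper's $\bigvee C$, once one checks via thickness that the ideal generated by the other lower covers of $y$ already contains $\downarrow_P x\setminus\{x\}$). The difference lies in which piece is kept fixed and how the contradiction is extracted. The paper keeps $R(\ell_0)\cup B$, where $B$ is defined through a reachability condition from $Q(x)=R(x)\setminus R(\ell_0)$, and then argues that this retract is squeezed strictly between $R(\ell_0)$ and $R(\ell_0\vee x)$ while being incomparable with $R(x)$, contradicting $\ell_0\prec \ell_0\vee x$. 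You instead fix $T=R(\ell_0)\cup P(y)$ directly, and your contradiction is simply that $T$ would have to equal $R(y)$ (because $y$ is join-irreducible and $T\not\subseteq R(\bigvee C_y)$) yet misses the nonempty set $A(x)$. Your packaging avoids the auxiliary reachability set and the separate verification that $\ell_0\prec\ell_0\vee x$, at the cost of invoking Lemma~\ref{lem:privatedisjoint} implicitly (to get $R(x)\cap P(y)=\varnothing$, needed for $S\cap R(x)=A(x)$). Both routes lean on thickness in the same place, namely to ensure $\bigvee C_x\le \ell_0$.
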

\begin{proof}
 Denote $C=\{x_1, \ldots, x_k\}\subseteq P$ the (possibly empty) set of elements covered by $y$ in $P$ except $x$ and denote by $Q(x):=R(x)\setminus R(\bigvee C)$. Note that since no element of $C$ is comparable to $x$ by distributivity we have $(\bigvee C)\wedge x=(x_1\wedge x)\vee\ldots\vee(x_k\wedge x)<x$ because every term is strictly below $x$ and $x$ is join-irreducible itself. Hence,  $Q(x)\neq\varnothing$. Moreover, note that since $P$ is  thick for all $w\prec x \prec y$ there exists a $x'\neq x$ with $w< x' \prec y$. In particular, $x'\in C$ and we get
 $\bigvee C\geq w$ for all $w\prec x$ in $P$. Thus, if $D$ are the elements covered by $x$ in $P$, then $\bigcup_{w\in D}R(x)\subseteq R(\bigvee D)\subseteq R(\bigvee C)$ and consequently $Q(x)\subseteq R(x)\setminus R(\bigvee D) \subseteq P(x)$. %\textcolor{gray}{Observe that if $C=\varnothing$, due to the thickness of $P$ no $w\prec x$ exists,}  
 
 We show that there is an edge from $Q(x)$ to $P(y)$. Suppose otherwise, and let $ A$ be the set of vertices of $R(y)\setminus \bigcup^k_{i=1}R(x_i)\setminus R(x)$ reachable from $Q(x)$ through vertices of $R(y)\setminus R(\bigvee C)\setminus R(x)$. This is, if $v\in A$ then there is a path starting at some vertex $u\in Q(x)$ and ending at $v$, and the rest of its vertices lie in $R(y)\setminus R(\bigvee C)\setminus R(x)$. But note that $R(y)\setminus R(\bigvee C)\setminus R(x)\subseteq R(y)\setminus \bigcup^k_{i=1}R(x_i)\setminus R(x)$. So, since $P(y)$ is a connected component of $R(y)\setminus\bigcup^k_{i=1}R(x_i)\setminus R(x)$, if $A$ and $P(y)$ intersected then $Q(x)$ and $P(y)$ would be adjacent, a contradiction. Hence, $B:=R(y)\setminus R(\bigvee C)\setminus R(x)\setminus  A$ is non-empty. Consider now the retraction $\varphi: R(y)\to R(\bigvee C)$. We can construct another retraction $\varphi':R( y)\to R(\bigvee C)\cup B$ as 
 $$v\mapsto\begin{cases}
            \varphi(v) & \text{if } v\in A\cup Q(x),\\
            v & \text{otherwise.}
\end{cases}$$
Since there are no edges from $A\cup Q(x)$ to $B$, this is a retraction, and its image $R(\bigvee C)\cup B$ is a retract. 

However, $R(\bigvee C)\subsetneq R(\bigvee C)\cup B\subsetneq R(y)$ and furthermore $R(x)\parallel R(\bigvee C)\cup B$ are incomparable. But since $y\in\mathcal J(L)$, $y$ covers a unique element in $L$. Since $C\cup x$ are exactly the elements covered by $y$ in $P$, the unique element covered by $y$ in $L$ is $\bigvee C\vee x$. This implies $R(\bigvee C)\cup B\subseteq R(\bigvee C\vee x)$.
Finally, let $z\in L$ with $\bigvee C<z\leq\bigvee C\vee x$. Since $P$ is thick, $w\leq\bigvee C$ for any $w\prec x$ in $\mathcal J(L)$, as we have seen above. Therefore, $z=\bigvee\{w\in\mathcal J(L)\mid w\leq z\}=\bigvee C\vee\bigvee\{w\in\mathcal J(L)\mid w\leq x\wedge z\}=\bigvee C\vee x$. Hence, $\bigvee C\prec\bigvee C\vee x$.

%{\color{gray}Similarly, since $P$ is thick $\bigvee C\prec \bigvee C\vee x$.} {\color{blue}[this seems a bit hard for me. I propose to write it in the form of a Claim, unless there is a more direct way] Similarly, we have the following.

%\textbf{Claim.} $\bigvee C\prec\bigvee C\vee x$.
%\begin{proof} Let $z\in L$ with $\bigvee C<z\leq\bigvee C\vee x$. Since $P$ is thick, $w\leq\bigvee C$ for any $w\prec x$ in $\mathcal J(L)$, as we have seen above. Therefore, $z=\bigvee\{w\in\mathcal J(L)\mid w\leq z\}=\bigvee C\vee\bigvee\{w\in\mathcal J(L)\mid w\leq x\wedge z\}=\bigvee C\vee x$.
%\end{proof}
%}

Hence, by Lemma~\ref{lem:End_is_lattice} $\bigvee C\prec\bigvee C\vee x$ and $R(\bigvee C)\subsetneq R(\bigvee C)\cup B\subseteq R(\bigvee C\vee x)$ yield
$R(\bigvee C)\cup B=R(\bigvee C\vee x)$. However, $x\leq \bigvee C\vee x$ yields $R(x)\subseteq R(\bigvee C)\cup B$ which is a contradiction to $R(x)\parallel R(\bigvee C)\cup B$.
\end{proof}

\begin{lemma}\label{lem:forcingaminor}
 Let $P$ be a thick lattice and $L=\mathcal{I}(P)$. If $G$ is a graph such that $\End(G)\cong L$, then the cover graph $G_P$ of $P$ is a minor of $G$.
\end{lemma}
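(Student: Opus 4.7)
The plan is to use the private parts $P(x)$ for $x\in P$ as the branch sets of the desired minor model of $G_P$ inside $G$. Since $L=\mathcal{I}(P)$ is distributive, Birkhoff's theorem identifies $\mathcal{J}(L)$ with $P$, so each $x\in P$ is a join-irreducible of $L$ and has an associated private part $P(x)\subseteq R(x)\subseteq V(G)$ in the sense defined just after Lemma~\ref{lem:nonempty_connected}. Since $P$ is a (thick) lattice, $\mathcal{J}(L)$ is a lattice, so all the preceding lemmas apply.

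First I would verify that $\{P(x)\mid x\in P\}$ is a valid collection of disjoint connected branch sets. Non-emptiness and connectedness are immediate from Lemma~\ref{lem:nonempty_connected}: indeed, by definition $P(x)$ is the connected component of $R(x)\setminus\bigcup_{w\in C}R(w)$ (where $C$ is the set of elements covered by $x$ in $P$) that contains the non-empty connected set $R(x)\setminus R(\bigvee C)$. Pairwise disjointness is exactly the content of Lemma~\ref{lem:privatedisjoint}, whose hypothesis that $\mathcal{J}(L)$ be a lattice is satisfied.

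Next I would verify the edge condition. An edge of $G_P$ is a cover relation $x\prec y$ in $P$, and Lemma~\ref{lem:edges} (which requires precisely the hypotheses we have: $L$ distributive and $P=\mathcal{J}(L)$ a thick lattice) guarantees that there is an edge of $G$ between $P(x)$ and $P(y)$. Therefore the mapping $x\mapsto P(x)$ realises $G_P$ as a minor of $G$.

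I do not expect any substantial obstacle, since the three preceding lemmas have been set up to do exactly the work needed here: Lemma~\ref{lem:nonempty_connected} supplies connectedness, Lemma~\ref{lem:privatedisjoint} supplies disjointness, and Lemma~\ref{lem:edges} supplies the connecting edges for cover relations. The only point requiring care is to confirm that the hypotheses of each of these lemmas are met under the assumption that $P$ is a thick lattice and $L=\mathcal{I}(P)$, which is immediate from Birkhoff's correspondence.
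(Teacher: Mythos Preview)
Your proposal is correct and follows exactly the same approach as the paper: use the private parts $P(x)$ as branch sets, invoke Lemma~\ref{lem:nonempty_connected} for non-emptiness and connectedness, Lemma~\ref{lem:privatedisjoint} for disjointness, and Lemma~\ref{lem:edges} for the edges corresponding to cover relations. Your additional remark that Birkhoff's correspondence ensures the hypotheses of the three lemmas are met is a nice bit of explicitness, but otherwise the arguments are identical.
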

\begin{proof}
 Associate every vertex $x$ of $G_P$ to $P(x)$. By Lemma~\ref{lem:nonempty_connected} each $P(x)$ is a non-empty connected subgraph of $G$. By Lemma~\ref{lem:privatedisjoint} all the $P(x)$ are mutually disjoint.  If $\{x,y\}$ is an edge in $G_P$, there is a cover relation $x\prec y$ in $P$ and by Lemma~\ref{lem:edges} there is an edge from $P(x)$ to $P(y)$. Thus, after contracting each of the connected subgraphs $P(x)$ to a single vertex and eventually deleting some edges, we obtain $G_P$ as a minor.
\end{proof}

While Lemma~\ref{lem:forcingaminor} suffices to establish one of the main results of the paper, we wonder if the assumption of $P$ being a thick lattice can be dropped. It is easy to see that lattice can be weakened to meet-semilattice without affecting our proof.

\begin{teo}\label{teo:forcingaminor}
For every positive integer $n$, there is a distributive lattice $L$ %of size $N=2^{O(n)}$, 
such that if $G$ has $\End(G)\cong L$, then $G$ contains the hypercube graph $Q_n$ as a minor.
 In particular, no class excluding a minor is endomorphism universal for commutative idempotent monoids. 
\end{teo}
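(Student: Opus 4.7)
The approach is to apply Lemma~\ref{lem:forcingaminor} with $P$ taken to be the Boolean lattice $\mathcal{B}_n=(2^{\{1,\dots,n\}},\subseteq)$. First I would verify that $\mathcal{B}_n$ is a thick lattice: it is a (distributive) lattice with meet and join given by intersection and union, and every length-$2$ interval has the form $[A,B]$ with $|B\setminus A|=2$, hence is isomorphic to $\mathcal{B}_2$ and contains exactly $4$ elements, satisfying the thickness condition. Setting $L=\mathcal{I}(\mathcal{B}_n)$, Birkhoff's Fundamental Theorem gives that $L$ is a distributive lattice with $\mathcal{J}(L)\cong\mathcal{B}_n$; concretely, $L$ is the lattice of abstract simplicial complexes on $n$ elements, illustrated for $n=3$ in Figure~\ref{fig:thick}.

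Lemma~\ref{lem:forcingaminor} then immediately yields that every graph $G$ with $\End(G)\cong L$ contains the cover graph of $\mathcal{B}_n$ as a minor. But the cover graph of $\mathcal{B}_n$ is by definition the $n$-dimensional hypercube graph $Q_n$, establishing the first assertion of the theorem.

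For the second assertion, I would invoke the well-known fact that every finite graph occurs as a minor of some hypercube $Q_n$ (for instance, the Hadwiger number of $Q_n$ is unbounded in $n$, so $Q_n$ contains arbitrarily large complete graph minors, and every fixed graph embeds into a sufficiently large clique). Given any graph $H$ defining a minor-closed class, choose $n$ large enough that $H$ is a minor of $Q_n$; by the first part, no $H$-minor-free graph then has $\End(G)\cong L=\mathcal{I}(\mathcal{B}_n)$. Since lattices coincide with commutative idempotent monoids, no class excluding a minor can be $\End$-universal for commutative idempotent monoids. The real technical content is already confined to Lemma~\ref{lem:forcingaminor}; the only remaining design choice is to pick a thick lattice whose cover graph contains arbitrary minors, and $\mathcal{B}_n$ is tailor-made for this purpose because its cover graph is $Q_n$, whose minor-universality is classical.
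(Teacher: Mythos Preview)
Your proposal is correct and follows essentially the same approach as the paper: choose $P=\mathcal{B}_n$, verify thickness via the observation that length-$2$ intervals have the form $[A,A\cup\{i,j\}]$ with four elements, and apply Lemma~\ref{lem:forcingaminor} to $L=\mathcal{I}(\mathcal{B}_n)$ to force $G_{\mathcal{B}_n}\cong Q_n$ as a minor. You additionally spell out the ``in particular'' clause (using that the Hadwiger number of $Q_n$ is unbounded), which the paper leaves implicit.
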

\begin{proof}
%{\color{gray}
% Consider the poset $P$ on $2n$ elements $\{x_1, y_1, \ldots, x_n, y_n\}$ with comparabilities $x_i\leq y_j$ for all $i,j\in\{1, \ldots n\}$. This poset has no intervals of length $2$, hence $P$ is thick. Further, $G_P\cong K_{n,n}$. % has a $K_n$-minor. %Finally, $|\mathcal{I}(P)|=2^{n}+2^{n}-1=N$. 
% The claim follows from Lemma~\ref{lem:forcingaminor} applies to $P$.
% }
 
 Consider the Boolean lattice $\mathcal B_n$, i.e., the containment order of all subsets of $\{1,...,n\}$. Every interval $[A,B]$ of $\mathcal B_n$ length $2$ has $B=A\cup\{i,j\}$, $i\neq j$, so $\mathcal B_n$ is thick. Therefore, Lemma~\ref{lem:forcingaminor} shows that any graph $G$ with $\End(G)\cong L=\mathcal I(\mathcal B_n)$ has $G_{\mathcal B_n}\cong Q_n$ as a minor. 
\end{proof}

In Theorem~\ref{teo:forcingaminor} we chose a simple family of thick lattices, namely the Boolean lattice $\mathcal{B}_n$. In this case, $\mathcal{I}(\mathcal{B}_n)$ coincides with the set of all abstract simplicial complexes on $\{1, \ldots, n\}$ ordered by inclusion. More generally, thick lattices include the class of Eulerian lattices---an important class for example containing face lattices of convex polytopes, see Stanley's survey~\cite{S94}.

Since the class of graphs of maximum degree $2$  excludes $K_{1,3}$ as a minor, a trivial consequence of Theorem~\ref{teo:forcingaminor} is that Theorem~\ref{teo:mainboundeddegree} is best-possible:
\begin{corol}\label{corol:bestpossible}
Graphs of maximum degree $2$ are not $\End$-universal for (distributive) lattices.
\end{corol}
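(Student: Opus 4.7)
The plan is to derive this as an immediate consequence of Theorem~\ref{teo:forcingaminor} together with the elementary structural fact that any graph of maximum degree $2$ is a disjoint union of paths and cycles, and that this property is preserved under taking minors. In particular, such graphs cannot contain $K_{1,3}$ as a minor, since $K_{1,3}$ has a vertex of degree~$3$ whereas every minor of a max-degree-$2$ graph still has maximum degree at most~$2$.

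Concretely, I would apply Theorem~\ref{teo:forcingaminor} with $n=3$ to obtain the distributive lattice $L=\mathcal{I}(\mathcal{B}_3)$ with the property that any graph $G$ with $\End(G)\cong L$ must contain the hypercube $Q_3$ as a minor. Since $Q_3$ is $3$-regular, it contains $K_{1,3}$ as a subgraph, hence any graph realizing $L$ as its endomorphism monoid contains $K_{1,3}$ as a minor. By the observation above, no such graph can have maximum degree~$2$, which exhibits a distributive lattice not representable by a max-degree-$2$ graph and thus shows that the bound of Theorem~\ref{teo:mainboundeddegree} is best possible. There is no real obstacle here: the corollary is genuinely trivial once Theorem~\ref{teo:forcingaminor} is in hand, the only ``choice'' being to pick $n$ large enough (any $n\geq 3$ works) so that $Q_n$ forces a vertex of degree exceeding~$2$.
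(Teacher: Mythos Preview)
Your proposal is correct and matches the paper's own argument essentially verbatim: the paper simply notes that the class of graphs of maximum degree~$2$ excludes $K_{1,3}$ as a minor and invokes Theorem~\ref{teo:forcingaminor}. Your only addition is the (harmless) intermediate step of passing through $Q_3$ before reaching $K_{1,3}$.
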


\section{Forcing a topological minor}
A semigroup is \emph{completely regular} if it can be expressed as a union of subgroups.
In the present section we show that no class excluding a topological minor can be $\End$-universal for the class of completely regular monoids (Theorem~\ref{teo:forcingatopologicalminor}). Our proof is inspired by the proof of the main result of~\cite{BP80}, showing the analogous (weaker) result for the class of all monoids. %Indeed, analyzing the construction of~\cite{BP80} already shows that no class excluding a topological minor can be $\End$-universal for the class of regular monoids. The ideas have to be slightly strengthened to yield our results. 
So we begin introducing some terminology from \cite{BP80}. Let $M$ be a monoid, $V$ a set and $V^{V}$ the monoid of all transformations of $V$. If $\Phi:M\rightarrow V^{V}$ is an injective monoid homomorphism, then $\Phi$ is called a \emph{faithful representation of $M$}. Now let $\Omega$ be a set and assume that $M$ is a submonoid of $\Omega^{\Omega}$. A faithful representation $\Phi:M\rightarrow V^V$ is called a \emph{pseudorealization} if there is an injective mapping of $\Omega$ into $V$ ($x\mapsto \overline x$, say), such that $(\Phi\alpha)\overline x=\overline{\alpha x}$ for every $x\in\Omega$ and $\alpha\in M$. (Here we drop some parentheses as in~\cite{BP80}.) Finally, in such case we say that the transformation monoid $\Phi M$ is a \emph{pseudorealization of $M$}.

Now take $\Omega=\mathbb Z_p\times\{1,2,3\}$, where $p$ is a prime number. Let $\Phi:\mathbb Z_p^2\rightarrow\Omega^{\Omega}$ be the faithful representation defined by $(c,d)\mapsto\pi_{c,d}$, where
\begin{align*}
    \pi_{c,d}(a,1) &= (a+c,1) \\
    \pi_{c,d}(a,2) &= (a+d,2) \\
    \pi_{c,d}(a,3) &= (a+c+d,3)
\end{align*}
for every $a\in\mathbb Z_p$. Consider the permutation group $P=\Phi\mathbb Z^2_p$. Then we get~\cite[Lemma 3.2]{BP80}:

\begin{lemma}\label{prop:BP80} Let $p$, $\Omega$ and $P$ as above. Assume that $G$ is a graph and $\Aut(G)$ is a pseudorealization of $P$. Then $G$ contains a subdivision of $K_{p,p}$.% (and hence of $K_{\lceil\sqrt{p}\rceil}$) as a subgraph.% {\color{blue}[is it worth to put here $\lfloor\frac{1+\sqrt{1+8p}}{2}\rfloor$ or the intermediate $\lfloor\sqrt{2p}\rfloor$? Also $-3+\lfloor\sqrt{1+8p}\rfloor$ works]}
\end{lemma}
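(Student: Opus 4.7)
The plan is to use the action of $P$ on $G$ to produce many shortest paths between prescribed branch vertices and to show that these assemble into the required subdivision. Write $\sigma_{c,d} = \Phi\pi_{c,d} \in \Aut(G)$, and set $V_i = \{\overline{(a,i)} : a \in \mathbb{Z}_p\}$ for $i \in \{1,2,3\}$. The pseudorealization property gives $\sigma_{c,d}(\overline{(a,1)}) = \overline{(a+c,1)}$, $\sigma_{c,d}(\overline{(a,2)}) = \overline{(a+d,2)}$ and $\sigma_{c,d}(\overline{(a,3)}) = \overline{(a+c+d,3)}$, so the pointwise $P$-stabilizers of $V_1, V_2, V_3$ are the three distinct order-$p$ subgroups $K_1 = \{(0,d) : d \in \mathbb{Z}_p\}$, $K_2 = \{(c,0) : c \in \mathbb{Z}_p\}$, $K_3 = \{(c,-c) : c \in \mathbb{Z}_p\}$ of $\mathbb{Z}_p^2$.

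Since $P$ fixes each $V_i$ as a set, it permutes the connected components of $G$, and we may pass to a $P$-invariant union of components and assume $\overline{(0,1)}$ and $\overline{(0,2)}$ lie in the same component. Fix a shortest path $\gamma = (x_0, x_1, \ldots, x_k)$ between them, with $x_0 = \overline{(0,1)}$ and $x_k = \overline{(0,2)}$. Because each $\sigma_{a,b}$ is an isometry, all distances $d(\overline{(a,1)},\overline{(b,2)})$ equal $k$, so $\gamma_{a,b} := \sigma_{a,b}(\gamma)$ is again a shortest path, from $\overline{(a,1)}$ to $\overline{(b,2)}$. No internal vertex of any $\gamma_{a,b}$ lies in $V_1 \cup V_2$, as such a vertex would be at distance strictly less than $k$ from both a vertex of $V_1$ and a vertex of $V_2$, contradicting minimality. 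The goal is to show that the $p^2$ paths $\{\gamma_{a,b}\}$ form a subdivision of $K_{p,p}$ with branch set $V_1 \cup V_2$.

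Suppose that $\gamma_{a,b}$ and $\gamma_{a',b'}$ share an internal vertex $v$ at positions $i$ and $j$, respectively. The shortcut $\overline{(a,1)} \to v \to \overline{(b',2)}$ has length $i + (k-j)$, which must be $\geq k$, so $i \geq j$; the symmetric shortcut forces $j \geq i$, hence $i = j$. It follows that $\sigma_{a,b}(x_i) = \sigma_{a',b'}(x_i)$, equivalently $(a-a', b-b') \in \mathrm{Stab}_P(x_i)$. Consequently, if every internal vertex of $\gamma$ has trivial $P$-stabilizer, the paths $\gamma_{a,b}$ are pairwise internally disjoint and the subdivision of $K_{p,p}$ is immediate.

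The main obstacle is the case where some internal $x_i$ has nontrivial $P$-stabilizer; then $\mathrm{Stab}_P(x_i)$ is one of the $p+1$ order-$p$ subgroups of $\mathbb{Z}_p^2$, and a whole coset of the $\gamma_{a,b}$ coalesces at $v$. One handles this by case analysis on which subgroup arises: if $\mathrm{Stab}_P(x_i)$ equals $K_1, K_2$ or $K_3$, then $v$ mimics a vertex of $V_1, V_2$ or $V_3$ respectively, and the affected bundle of paths can be rerouted by detouring through $V_3$ and invoking the analogous shortest-path construction between the other orbit pairs; for a stabilizer $K_r$ with $r \notin \{0, -1, \infty\}$, the fixing element acts nontrivially on all three $V_i$, and a similar detour via $V_3$ resolves the collision. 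Following the scheme of Babai and Pultr~\cite{BP80}, the final output is a system of internally vertex-disjoint paths realizing a $K_{p,p}$-subdivision in $G$. This delicate case analysis is the technical crux of the proof.
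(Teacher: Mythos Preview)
The paper does not give its own proof of this lemma; it is quoted verbatim from Babai and Pultr \cite[Lemma~3.2]{BP80}. So there is nothing in the present paper to compare against, and your proposal must be judged on its own.

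Your outline follows the genuine Babai--Pultr strategy in its opening moves: the three orbits $V_1,V_2,V_3$, the shortest-path family $\{\gamma_{a,b}\}$, the observation that internal vertices of a geodesic avoid $V_1\cup V_2$, and the key fact that an internal collision between $\gamma_{a,b}$ and $\gamma_{a',b'}$ forces equal position indices and hence $(a-a',b-b')\in\mathrm{Stab}_P(x_i)$. That much is correct and is exactly the skeleton of the original argument.

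Two things, however, prevent this from being a proof. First, the connectivity reduction is not justified as written: ``passing to a $P$-invariant union of components'' cannot place $\overline{(0,1)}$ and $\overline{(0,2)}$ in the same component if they were not already there. At best you can argue (by transitivity of the action) that if \emph{some} $V_1$-vertex is connected to \emph{some} $V_2$-vertex then $\overline{(0,1)}$ is connected to $\overline{(0,2)}$; but you give no argument covering the case where no such pair is connected.

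Second, and more importantly, you explicitly leave the ``technical crux'' undone. When $\mathrm{Stab}_P(x_i)=K_1$, say, all $p$ paths emanating from a fixed $\overline{(a,1)}$ coalesce at a single internal vertex, and your proposed remedy---rerouting through $V_3$ and invoking the analogous construction for the other orbit pairs---is only a gesture: one must show that the rerouted paths do not themselves collapse, and that the various detours can be assembled into a single internally disjoint system. That is precisely the substance of the Babai--Pultr argument, and you have not supplied it. What you have written is a correct-in-spirit sketch of the shape of their proof, not a proof.
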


Next, we strengthen \cite[Lemma 3.1]{BP80}. In fact, most of their arguments can be reused.

\begin{lemma}\label{lem:3.1} Let $p$, $\Omega$ and $P$ as above. There exists a completely regular monoid $M\subseteq\Omega^{\Omega}$  %of size $N=p!^3+p^2+3p$\comment{?}
such that 
\begin{enumerate}[(i)]
   \item $P$ coincides with the group of invertible elements of $M$;
   \item every faithful representation of $M$ is a pseudorealization.
\end{enumerate}
%for every graph $G$ with $\End(G)\cong M$, the transformation group $\Aut(G)$ is a pseudorealization of $P$. {\color{blue}[Maybe I'll reestate this result in a simpler way]}
\end{lemma}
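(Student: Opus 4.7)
The plan is to take
$$M := P \cup \{e_x : x \in \Omega\} \subseteq \Omega^\Omega,$$
where $e_x$ denotes the constant transformation $y\mapsto x$. That $M$ is a submonoid follows immediately from the identities $\pi e_x = e_{\pi x}$, $e_x \pi = e_x$, and $e_x e_y = e_x$ (for $\pi\in P$ and $x,y\in\Omega$). Property~(i) is then straightforward: each $e_x$ has image of size $1<|\Omega|$ and hence is non-invertible, whereas every $\pi\in P$ is a permutation whose inverse lies in $P$, so $P$ is precisely the group of invertible elements of $M$. Complete regularity follows at once as well, since $M$ decomposes as the union of the group $P$ with the singleton subgroups $\{e_x\}$ (each $e_x$ being idempotent), so $M$ is a union of subgroups.

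The substantial step is (ii). Given a faithful representation $\Phi':M\to V^V$, I would set $\overline x := \Phi'(e_x)(v_0)$ for a suitable $v_0\in V$. Equivariance is automatic, since for every $\alpha\in M$,
$$\Phi'(\alpha)\,\overline x \;=\; \Phi'(\alpha e_x)(v_0) \;=\; \Phi'(e_{\alpha x})(v_0) \;=\; \overline{\alpha x},$$
using the identity $\alpha e_x = e_{\alpha x}$ valid in $M$. What remains is injectivity of $\overline{\,\cdot\,}$, equivalently, finding $v_0\in\bigcap_{x\neq y} D_{x,y}$, where $D_{x,y} := \{v\in V : \Phi'(e_x)(v)\neq\Phi'(e_y)(v)\}$. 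Each $D_{x,y}$ is nonempty by faithfulness, and the relation $e_x\pi=e_x$ forces $\Phi'(e_x)$ to be constant on every $P$-orbit of $V$; hence each $D_{x,y}$ is $P$-invariant, and the task reduces to exhibiting a single $P$-orbit $O\subseteq V$ on which the values $c_{x,O}:=\Phi'(e_x)|_O\in V$ are pairwise distinct in $x\in\Omega$.

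The existence of such a separating orbit is the main obstacle, and this is where I would follow the argument of \cite[Lemma~3.1]{BP80} most closely. The labelings $x\mapsto c_{x,O}$ form $P$-equivariant maps $\Omega\to V$ whose images sit inside the sets $I_x:=\mathrm{Im}\,\Phi'(e_x)$; the relations $e_ye_x=e_y$ imply that $\Phi'(e_y)|_{I_x}$ and $\Phi'(e_x)|_{I_y}$ are mutually inverse bijections, and $\Phi'(\mathrm{Stab}_P(x))$ fixes $I_x$ pointwise. A collapse of the labeling on every orbit would then force stabilizer coincidences beyond the three pairwise distinct order-$p$ stabilizers $\mathrm{Stab}_P(x_0^{(i)})$ ($i=1,2,3$) attached to representatives of the $P$-orbits on $\Omega$; combining this with faithfulness of $\Phi'|_P$ and the fact that $P\cong\mathbb Z_p^2$ has exactly $p+1$ subgroups of order $p$, one derives a contradiction and produces a separating orbit.

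Should the bare $M=P\cup\{e_x\}$ turn out to lack enough rigidity for the separation step, I would adjoin the three projection idempotents $\epsilon_i:(a,j)\mapsto(a,i)$, $i\in\{1,2,3\}$, together with their $P$-translates. Each $\pi\epsilon_i$ satisfies $(\pi\epsilon_i)^{p+1}=\pi\epsilon_i$ and therefore lies in a cyclic subgroup of order $p$ with identity $(\pi\epsilon_i)^p$, so the augmented $M$ remains completely regular with $P$ as its invertible part; the extra idempotents furnish enough relations to rigidify the three $P$-orbits of $\Omega$ inside any faithful representation, securing the separation argument.
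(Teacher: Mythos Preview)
Your bare monoid $M=P\cup\{e_x:x\in\Omega\}$ does \emph{not} satisfy (ii); the ``separating orbit'' argument you sketch cannot be completed. Here is a concrete faithful representation that is not a pseudorealization. Take $V=\Omega\sqcup\{*_1,*_2,*_3\}$, let $\Phi'(\pi)$ act as $\pi$ on $\Omega$ and fix each $*_i$, and for $x\in\Omega$ set
\[
\Phi'(e_x)(v)=
\begin{cases}
x & \text{if }v\in\Omega_i\cup\{*_i\}\text{ and }x\in\Omega_i,\\
*_i & \text{if }v\in\Omega_i\cup\{*_i\}\text{ and }x\notin\Omega_i,
\end{cases}
\]
where $\Omega_i=\mathbb Z_p\times\{i\}$. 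One checks directly that all the relations $e_xe_y=e_x$, $\pi e_x=e_{\pi x}$, $e_x\pi=e_x$ hold and that $\Phi'$ is faithful. Yet any $P$-equivariant injection $\Omega\hookrightarrow V$ is forced to send $\Omega_i$ into $\Omega_i$ (these are the only orbits with the correct point stabilisers), and then $\Phi'(e_y)(\overline x)=*_i\neq\overline y$ whenever $x\in\Omega_i$, $y\notin\Omega_i$. So (ii) fails. Your heuristic about stabilisers breaks down because an equivariant labeling can legitimately collapse two of the three levels to a fixed point on each orbit while remaining globally faithful.

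Your fallback also has a gap: the set $P\cup\{e_x\}\cup\{\pi\epsilon_i\}$ is not closed under composition, since $\epsilon_i\pi_{c,d}$ sends $(a,j)$ to $(a+e_j,i)$ with $e_j$ depending on $j$, which is not of the form $\pi'\epsilon_{i'}$ unless $c=d=0$. Even after closing up, the resulting maps act on each level only by \emph{shifts}, and that is not enough to imitate the paper's separation trick, which for $x=(k,i)$, $y=(\ell,i)$ in the same level needs an $\alpha$ with $\alpha(k,i)=(a,1)$ and $\alpha(\ell,i)=(b,1)$ for \emph{prechosen} $a\neq b$; shifts can realise this only when $a-b=k-\ell$. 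The paper avoids this by adjoining $S=\{\alpha:\alpha(k,i)=(\sigma_i(k),1),\ \sigma_1,\sigma_2,\sigma_3\in\mathrm S_p\}$, allowing arbitrary permutations on each level; complete regularity is preserved because each $\alpha\in S$ satisfies $\alpha^{m+1}=\alpha$ for $m$ the order of $\sigma_1$, and (ii) then follows by a direct two-line calculation rather than any orbit-separation argument.
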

\begin{proof} Let $\textrm{S}_p$ be the symmetric group over $\{1,...,p\}$. Consider the subsets $C=\{\gamma_x\in\Omega^{\Omega}\mid x\in\Omega \text{ and } \gamma_x(\Omega)=\{x\}\}$ and $S=\{\alpha\in\Omega^{\Omega}\mid \exists\sigma_{\alpha,1},\sigma_{\alpha,2},\sigma_{\alpha,3}\in\mathrm S_p \text{ and } \forall k\in\mathbb Z_p\ \alpha(k,i)=(\sigma_{\alpha,i}(k),1)\}$. Set $M=S\cup P\cup C$.  %First note that $|M|=p!^3+p^2+3p$. 
We claim that $M$ is a completely regular monoid. Its neutral element is the neutral element of $P$, which clearly satisfies $P^2\subseteq P$ and is a group, hence completely regular. Moreover, $C^2,CP,CS,PC,SC\subseteq C$ and $C$ is idempotent, hence completely regular. Let $k\in\mathbb Z
_p$ and $i\in\{1,2,3\}$. If $\pi_{c,d}\in P$ and $\alpha\in S$, then $(\pi_{c,d}\circ\alpha)(k,i)=(\sigma_{\alpha,i}(k)+c,1)$ and $(\alpha\circ\pi_{c,d})(k,i)=(\sigma_{\alpha,i}(k+e),1)$, where $$e=\begin{cases}c & \text{if } i=1  \\ d & \text{if } i=2 \\ c+d &\text{if } i=3.\end{cases}$$ Therefore $PS,SP\subseteq S$. Moreover, if $\alpha,\beta\in S$ then $(\alpha\circ\beta)(k,i)=(\sigma_{\alpha,1}(\sigma_{\beta,i}(k)),1)$, so $S^2\subseteq S$. In particular, we get that if $\sigma_{\alpha,1}\in\textrm S_p$ is of order $m$, then $\alpha^{m+1}=\alpha$, hence $S$ is completely regular. We conclude that $M$ is a completely regular monoid. It is clear that (i) holds.

Let $V$ be a set and $\Psi:M\rightarrow V^V$ a faithful representation of $M$. We are now going to prove that $\Psi$ is a pseudorealization. Take two different elements $(a,1),(b,1)\in\mathbb Z_p\times\{1\}\subseteq\Omega$. Since $\Psi\gamma_{(a,1)}\neq\Psi\gamma_{(b,1)}$, we have that $(\Psi\gamma_{(a,1)})v\neq(\Psi\gamma_{(b,1)})v$ for some $v\in V$. Define the mapping
\begin{align*}
\Omega &\ \rightarrow\ V \\ 
x &\ \mapsto \ \overline x=(\Psi\gamma_x)v.
\end{align*}
This mapping is injective. Indeed, let $(k,i),(\ell,j)$ be two distinct elements of $\Omega=\mathbb Z_p\times\{1,2,3\}$. If $k\neq\ell$ then take a permutation $\sigma\in\textrm{S}_p$ with $\sigma(k)=a$, $\sigma(\ell)=b$, and the transformation
$\alpha\in S$ with $\sigma_{\alpha,1}=\sigma_{\alpha,2}=\sigma_{\alpha,3}=\sigma$. Then 
\[(\Psi\alpha)\overline{(k,i)}=(\Psi\alpha)(\Psi\gamma_{(k,i)})v=(\Psi(\alpha\gamma_{(k,i)}))v=(\Psi\gamma_{(a,1)})v\neq\]
\[(\Psi\gamma_{(b,1)})v=(\Psi(\alpha\gamma_{(\ell,j)}))v=(\Psi\alpha)(\Psi\gamma_{(\ell,j)})v=(\Psi\alpha)\overline{(\ell,j)},\]
so $\overline{(k,i)}\neq\overline{(\ell,j)}$. And if $i\neq j$ then take $\sigma,\tau\in\textrm{S}_p$ with $\sigma(k)=a$, $\tau(\ell)=b$, and a transformation $\alpha\in S$ with $\sigma_{\alpha,i}=\sigma$ and $\sigma_{\alpha,j}=\tau$. Repeating the above argument we see that again $\overline{(k,i)}\neq\overline{(\ell,j)}$. Moreover, for any $\beta\in M$ and $x\in\Omega$
\[(\Psi\beta)\overline x=(\Psi\beta)(\Psi\gamma_x)v=(\Psi(\beta\gamma_x))v=(\Psi\gamma_{\beta x})v=\overline{\beta x}.\]
Hence $\Psi$ is a pseudorealization.
%Finally, it is enough to see that there is a faithful representation $P\rightarrow\Aut(G)$. But this is clear: just take $\Psi_{|P}$, where $\Psi:M\rightarrow\End(G)$ is a monoid isomorphism.
\end{proof}

%\comment{I think for cuantitative statements such as the one below we should just stick to big-O notation instead of giving the precise formulas (if at all)}
Note that if $M$ is a monoid, $P\subseteq M$ is its group of invertible elements, $G$ is a graph and $\Psi:M\rightarrow\End(G)$ is a monoid isomorphism then $\Psi_{|P}:P\rightarrow\Aut(G)$ is a faithful representation. Therefore, from the above lemmas we obtain: 

\begin{teo}\label{teo:forcingatopologicalminor}
For every positive integer $n$, there is a completely regular monoid $M$ %of size $N=2^{O(p\log p)}$, 
 such that if $G$ has $\End(G)\cong M$, then $G$ contains a subdivision of $K_{n,n}$. % (and hence of $K_{\lceil\sqrt{p}\rceil}$) as a subgraph. 
 In particular, no class excluding a topological minor is endomorphism universal for completely regular monoids. 
\end{teo}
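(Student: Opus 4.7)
The plan is to combine Lemmas \ref{prop:BP80} and \ref{lem:3.1} in the obvious way. Given $n$, I would first choose a prime $p\geq n$ (Bertrand's postulate suffices, but any prime $\geq n$ works) and let $\Omega=\mathbb{Z}_p\times\{1,2,3\}$ and $P=\Phi\mathbb{Z}_p^2\subseteq\Omega^{\Omega}$ as in the preceding discussion. Let $M\subseteq\Omega^{\Omega}$ be the completely regular monoid furnished by Lemma~\ref{lem:3.1}, so that $P$ is exactly the group of invertible elements of $M$ and every faithful representation of $M$ is a pseudorealization. I claim that this $M$ meets the requirements of the theorem.

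Let $G=(V,E)$ be any graph with $\End(G)\cong M$ and fix an isomorphism $\Psi\colon M\to\End(G)$. Composing $\Psi$ with the inclusion $\End(G)\hookrightarrow V^{V}$ gives a faithful representation of $M$ on $V$, which by Lemma~\ref{lem:3.1}(ii) is a pseudorealization: there is an injection $\Omega\hookrightarrow V$, $x\mapsto \overline{x}$, with $(\Psi\alpha)\overline{x}=\overline{\alpha x}$ for all $\alpha\in M$ and $x\in\Omega$. Since the invertible elements of $M$ are exactly $P$ by Lemma~\ref{lem:3.1}(i), and the invertible elements of $\End(G)$ are exactly $\Aut(G)$, the restriction $\Psi_{|P}\colon P\to\Aut(G)$ is a (surjective) group isomorphism. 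The same embedding $x\mapsto\overline{x}$ witnesses that this restricted faithful representation is a pseudorealization as well, i.e.\ that $\Aut(G)$ is a pseudorealization of $P$ in the sense of Lemma~\ref{prop:BP80}.

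Now Lemma~\ref{prop:BP80} applies and yields that $G$ contains a subdivision of $K_{p,p}$, and hence of $K_{n,n}$. Thus if a graph class $\mathcal{G}$ were $\End$-universal for completely regular monoids, then for every $n$ some member of $\mathcal{G}$ would contain a subdivision of $K_{n,n}$, so $\mathcal{G}$ could not exclude any topological minor. There is really no hidden difficulty here; the only thing to verify is the compatibility of the pseudorealization property under restriction from $M$ to $P$, which is immediate because the defining equation $(\Psi\alpha)\overline{x}=\overline{\alpha x}$ is inherited by any submonoid and $\Aut(G)$ is the image of $P$ under $\Psi$. All of the work has been pushed into Lemmas~\ref{prop:BP80} and~\ref{lem:3.1}; once they are in place the deduction of the theorem is a one-paragraph formality.
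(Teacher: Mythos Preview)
Your proposal is correct and follows essentially the same approach as the paper, which simply observes that an isomorphism $\Psi:M\to\End(G)$ restricts to a faithful representation $\Psi_{|P}:P\to\Aut(G)$ and then invokes Lemmas~\ref{prop:BP80} and~\ref{lem:3.1}. You have spelled out a few details the paper leaves implicit (choosing a prime $p\geq n$, the surjectivity of $\Psi_{|P}$ onto $\Aut(G)$ via preservation of invertibles, and the inheritance of the pseudorealization equation under restriction), but the argument is the same.
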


%\begin{proof}
% We only analyse the relevant part of the construction of \cite{BP80}. For a prime $p$, one represents $\mathbb{Z}^2_p$ as permutation group $P$ on a set $\Omega$ of size $3p$. Now, one defines the transforamtion monoid $$M=\{\varphi:\Omega\to\Omega\mid \varphi\in P \text{ or } |\varphi(\Omega)|\leq 2\}.$$ 
% Cleary, $|M|=N$. Moreover,  if $\varphi\in P$, then  $\varphi\varphi^{-1}\varphi=\varphi$. Otherwise, pick any $\psi\in\Omega$ such that $\psi(a)\in\varphi^{-1}(a)$ for all $a\in\varphi(G)$. Cleary, such $\psi$ exists and we get $\varphi\psi\varphi=\varphi$. This completes the proof that $M$ is regular. The fact, that if $G$ has $\End(G)\cong M$, then $G$ contains a subdivision of $K_{p,p}$ is \cite[Lemma 3.2]{BP80}. 
%\end{proof}

%\comment{I think by modifying their construction I can already do $N=p(p^2+p+3)$:} just define $$M=\{\varphi:\Omega\to\Omega\mid \varphi\in P \text{ or } |\varphi(\Omega)|\leq 2 \text{ and there is } a \in\Omega: |\varphi^{-1}(a)|\geq |\Omega|-1 \}.$$ But then their  \cite[Lemma 3.1]{BP80} should also be reproved. 

\section{Comments}

The results of this paper extend the study of endomorphism universality of sparse graph classes. Particular attention is paid to classes of monoids that are important in Semigroup Theory. Let us first point out some natural follow-up questions to our results and then present some other problems of a similar flavor. 

We have shown that commutative idempotent monoids (Theorem~\ref{teo:mainboundeddegree}) as well as $k$-cancellative monoids (Theorem~\ref{teo:groups}) can be represented as endomorphism monoids of graphs of bounded degree, while no class excluding a topological minor can represent all completely regular monoids (Theorem~\ref{teo:forcingatopologicalminor}). It is thus a natural question what happens with other classes of monoids, in particular completely regular classes that contain groups or commutative idempotent monoids. Indeed, a second part of the question of Babai and Pultr~\cite[Problem 1.4]{BP80} asks whether every idempotent monoid is the endomorphism monoid of a graph of bounded degree. The structure of idempotent monoids is understood in following sense: every idempotent monoid can be seen as a lattice $L$ where element $x\in L$ corresponds to a \emph{rectangular band} $S_x$, i.e., a direct product of a left-zero semigroup $L_{x_{\ell}}$ and a right-zero semigroup $R_{x_r}$. Recall that $L_{x_{\ell}}$ is defined on the set $\{\ell_1, \ldots, \ell_{x_{\ell}}\}$ by $\ell_i\cdot \ell_j=\ell_i$ and similarly $R_{x_r}$ is defined on the set $\{r_1, \ldots, r_{x_r}\}$ by $r_i\cdot r_j=r_j$. Now, if $m\in  S_x=L_{x_{\ell}}\times R_{x_r}$ and $m' \in S_{x'}= L_{x'_{\ell}}\times R_{x'_r}$, then $mm'\in S_{x\wedge x'}$, see~\cite{M54}. Theorem~\ref{teo:mainboundeddegree} can be seen as the extremal case where $x_{\ell}=x_r=1$ for all $x\in L$. We sketch the other extremal case where $|L|=2$. In this case $M$ is obtained by adding a neutral element to a rectangular band $S=L_{x_{\ell}}\times R_{x_r}$, i.e., $M$ is the \emph{adjoint monoid} $S^+$ of $S$. Figure~\ref{fig:R5L3} sketches how to construct an arc-colored graph $G'$ with $\End(G')\cong S^+$ if $S$ is a rectangular band. 

\begin{figure}[h]
\centering
\includegraphics[width=\textwidth]{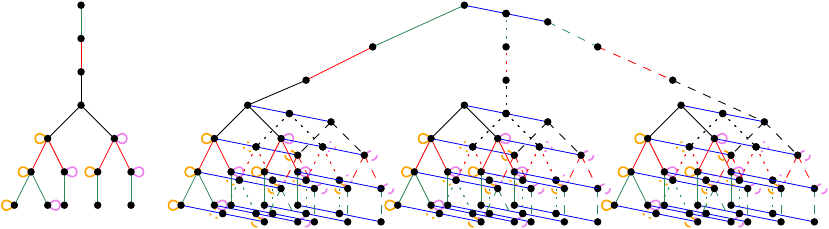}
\caption{Left: an arc-colored graph $G$ with $\End(G)\cong R_5^+$. Right: an arc-colored graph $G'$ with  $\End(G')\cong(L_3\times R_5)^+$.}\label{fig:R5L3}
\end{figure}

Thus, our results from Sections~\ref{sec:HeldrlinPultr} and~\ref{sec:blowup} yield:
\begin{prop}\label{prop:band}
The adjoint monoid of a rectangular band is the endomorphism monoid of a graph of bounded degree.
\end{prop}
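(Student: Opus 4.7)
The plan is to construct an arc-colored digraph $D$ with $\End(D)\cong S^+$, where $S = L_m\times R_n$ is a rectangular band, and with $\Delta^{\pm}(D)$ bounded by an absolute constant that does not depend on $m$ or $n$. Once such $D$ is in hand, Lemma~\ref{lem:second_step} produces an arc-colored loopless digraph $D'$ with $\End(D')\cong S^+$, $\Delta(D')\le\max(\Delta^{\pm}(D)+1,3)$, and $\delta^{+}(D')=\delta^{-}(D')=1$. Lemma~\ref{lem:third_step} then converts $D'$ into a simple undirected graph $G$ with $\End(G)\cong S^+$ and $\Delta(G)=\max(\Delta(D'),3)$, which is bounded as claimed.

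For the construction of $D$, I will follow the idea sketched in Figure~\ref{fig:R5L3}. First I treat the right-zero case $S=R_n$: take a small "spine" gadget playing the role of the identity $1\in R_n^+$, and attach to it $n$ rigid "sink" gadgets $T_1,\ldots,T_n$, one per generator $r_j$, using arcs of $n$ pairwise distinct colors (together with a subdivision color as in Lemma~\ref{lem:basic2}) so that the only non-identity endomorphisms are, for each $j$, a retraction of $D$ onto $T_j$. To pass from $R_n^+$ to $(L_m\times R_n)^+$ I replace each sink $T_j$ by a cyclic cluster of $m$ isomorphic copies $T_{1,j},\ldots,T_{m,j}$ linked by a "rotation" sub-gadget whose endomorphism monoid is $L_m^+$, providing the row parameter. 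Using distinct colors for the $n$ column-attachments and a single common color for the rotation sub-gadgets keeps the per-color in- and out-degree at every vertex bounded by an absolute constant, even though the total number of colors used in $D$ grows with $mn$; this is what is needed to apply Lemma~\ref{lem:second_step}.

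The main obstacle is verifying that the resulting endomorphism monoid is exactly $S^+$. In one direction, for each pair $(\ell_i,r_j)\in S$ I must exhibit an endomorphism of $D$ that retracts everything onto $T_{i,j}$ and check that composition of two such retractions reproduces the rectangular-band product $(\ell_i,r_j)(\ell_{i'},r_{j'})=(\ell_i,r_{j'})$: the right-hand retraction first collapses $D$ onto column $j'$, and the left-hand retraction then selects row $i$ within that column, which agrees with the construction. In the other direction, I argue that the distinct colors of the column-attachments force every non-identity endomorphism to collapse the spine onto some single column $T_j$, while the rigidity of the rotation sub-gadget allows only $m$ choices of row, for a total of $mn+1$ endomorphisms. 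The walk-tracing technique used in Sections~\ref{sec:HeldrlinPultr} and~\ref{subsec:boundeddegree}, together with the rigidity of Heldrl\'in-Pultr gadgets invoked at the final step via Lemma~\ref{lem:third_step}, should provide all the tools needed to complete this verification.
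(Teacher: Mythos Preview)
Your overall strategy matches the paper's: build an arc-colored digraph $D$ with $\End(D)\cong S^+$ and bounded $\Delta^{\pm}$, then apply Lemma~\ref{lem:second_step} followed by Lemma~\ref{lem:third_step}. The paper only provides a figure-level sketch of $D$, so your attempt to spell it out is in the right spirit.

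However, your description of the endomorphisms contains a genuine error that would make the verification fail. You claim that $\varphi_{(i,j)}$ ``retracts everything onto $T_{i,j}$'', i.e.\ onto a single cell. This cannot be right: if each $\varphi_{(i,j)}$ were an idempotent with image $T_{i,j}$ and the $T_{i,j}$ were distinct, then the image of $\varphi_{(i,j)}\circ\varphi_{(i',j')}$ would lie in $T_{i,j}$, whereas $\varphi_{(i,j')}$ has image $T_{i,j'}$, contradicting the rectangular-band law $(\ell_i,r_j)(\ell_{i'},r_{j'})=(\ell_i,r_{j'})$ whenever $j\neq j'$. In fact left multiplication by $(\ell_i,r_j)$ sends $1\mapsto(\ell_i,r_j)$ and $(\ell_{i'},r_{j'})\mapsto(\ell_i,r_{j'})$, so the image of $\varphi_{(i,j)}$ is the whole \emph{row} $\{T_{i,1},\ldots,T_{i,n}\}$; only the destination of the spine distinguishes $\varphi_{(i,j)}$ from $\varphi_{(i,j')}$. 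Your own composition check already shows the inconsistency: you describe $\varphi_{(i',j')}$ once as retracting onto a cell and once as ``collapsing onto column $j'$''.

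The fix is straightforward once you have the right picture: $\varphi_{(i,j)}$ should send the spine into copy $T_{i,j}$ while, within \emph{every} column $j'$, collapsing the $m$ copies onto $T_{i,j'}$. With this correction the construction you outline (spine plus $n$ column-clusters of $m$ copies each, using distinct colours for the column attachments and a common colour family for the intra-column $L_m$-gadget) does give bounded $\Delta^{\pm}$, and the rest of your pipeline through Lemmas~\ref{lem:second_step} and~\ref{lem:third_step} goes through as written.
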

We do not know how to use this with Theorem~\ref{teo:mainboundeddegree} to represent all idempotent monoids by graphs of bounded degree, so~\cite[Problem 1.4]{BP80} remains open. Similar questions that arise from Figure~\ref{fig:venn} are whether the class of bounded degree graphs is $\End$-universal for monoids that are commutative and regular (i.e.~for every $x$ there is a $y$ such that $xyx = x$) or monoids that are inverse (i.e.~for every $x$ there is a unique $y$ such that $x=xyx$ and $y=yxy$).
Koubek, R{\"o}dl, and Shemmer~\cite{KRS09}, when studying the minimum number of vertices or edges required for a graph to represent a given monoid, obtain special results for the class of completely simple monoids---a subclass of completely regular monoids. So this would be another natural candidate.

\bigskip

%Another question by Babai and Pultr~\cite[Problem 2.3]{BP80} asks whether every monoid is the endomorphism monoid of a regular graph. This is an analogy to Frucht's result that every group is the automorphism group of a $3$-regular graph~\cite{Frucht49}. Note that while we are able to represent them by graphs of maximum degree $3$ (Theorem~\ref{teo:groups}), we do not know if every group is the endomorphism group of a regular graph.

%\paragraph{Graphs with given monoid}
Conversely to the above problems, one may wonder what are the graphs whose endomorphism monoid falls into a given class $\mathcal{M}$. We have presented an easy result (Lemma~\ref{lem:End_is_lattice}) that characterizes graphs whose endomorphism monoid is idempotent and commutative in an algebraic manner. In the same vein one may characterize graphs whose endomorphism monoid is idempotent, as those graphs all of whose endomorphisms are retractions. On the other hand $\End(G)$ is a group if and only if $G$ has no non-trivial retracts, see e.g.~\cite{Kna-19,HN04}.  However, purely graph theoretic characterizations seem difficult in the above cases. A question of Knauer and Wilkeit~\cite[Question 4.2]{M88} asks for a characterization of graphs with regular endomorphism monoid. This remains open, see~\cite{HL08}. Similarly it seems interesting to characterize graphs whose endomorphism monoid is completely regular, completely simple, commutative, or inverse.
%\bigskip

A fundamental tool for representing a given monoid $M$ in a desired way is finding a generating set $C\subseteq M$ such that $\Cay_{\mathrm{col}}(M,C)$ is well-behaved (Lemma~\ref{lem:basic}). It is thus a natural question which monoids or semigroups (of a particular class) $\mathcal{M}$ admit a generating set such that their Cayley graph, i.e., a \emph{generated} Cayley graph, falls into a particular class of graphs $\mathcal{G}$. When sparse classes are considered, usually one only considers the underlying simple Cayley graph $\underline{\Cay}(M,C)$, where colors, orientations, multiplicity, and loops are ignored.
An old result into this direction by Maschke~\cite{M96} characterizes the planar groups, i.e., those groups that have a planar generated Cayley graph. The planar \emph{right-groups}, i.e., a product of a group and a right-zero semigroup, were characterized in~\cite{KK16}, a characterization of certain planar Clifford semigroups was given by Zhang~\cite{Zha-08} and slightly corrected in~\cite[Theorem 13.3.5]{Kna-19}, and Solomatin characterized planar products of cyclic semigroups~\cite{Sol-06} and studied some classes of outerplanar semigroups~\cite{Sol-11}. It remains open to characterize outerplanar semigroups, see~\cite[Problem 2]{KK16}.

%More generally, the (non-orientable) \emph{genus} of a semigroup is the minimum (non-orientable) genus among all its generated Cayley graphs. This has been considered in depths for groups, see e.g. the books by White~\cite{White1984} or Gross and Tucker~\cite{GT01}. %Results of Babai~\cite{B91} and Thomassen~\cite{T91} imply that for every $k\geq 3$ there are only finitely many groups of (non-orientable) genus exactly $k$. Furthermore, r
%Results of Proulx~\cite{P78} and Tucker~\cite{T80,T84} imply that for every $k\geq 3$ ($k\geq 4$) there are only finitely many groups of (non-orientable) genus $k$. %This also follows from more general results of Babai~\cite{B91} and Thomassen~\cite{T91}. %, while for sphere, torus there  are infinitely many groups, see also~\cite[Section 6.4.3]{GT01}.\comment{not sure about the situation concerning projective plane and Klein bottle}
%\begin{prob}
%Are there infinitely many monoids of every (non-orientable) genus?
%\end{prob}

Another current is the question which graphs in a given class $\mathcal{G}$ are (generated) Cayley graphs of a semigroup of prescribed type. The question which Generalized Petersen Graphs are underlying simple graphs of Cayley graphs of monoids has been studied in~\cite{GMK21}. A characterization of planar (minimally) generated Cayley graphs of commutative idempotent semigroups (a.k.a semilattices) has been given in~\cite[Theorem 13.3.4]{Kna-19}. Results of Zelinka~\cite{Zelinka1981} that were corrected in~\cite{KP21} imply that all forests are underlying simple Cayley graphs of a monoid, while not every tree is a generated Cayley graph of a monoid~\cite{KP21}. It is open, whether every graph of treewidth $2$ (a.k.a series-parallel graph) is the underlying simple Cayley graph of a monoid, see~\cite[Question 6.2]{KP21}. While there are graphs of treewidth $3$ that are not the underlying simple Cayley graph of monoid, it remains open, whether every graph is the underlying simple Cayley graph of a semigroup, see~\cite[Question 6.3]{KP21}.

%\comment{I had a substantial doubt, whether \emph{universal} was the right word. It is sort of justified by  the term \emph{algebraically universal} used by Nesetril, Ossona - however it is but a special case. On the other hand, the only name of this property that I have seen elsewhere is \emph{$G$-binding} in the paper of Mendelsohn and I don't like it} 

\paragraph{Acknowledgments:} We thank Pavol Hell for pointing us to the generalization of Frucht's theorem to endomorphisms~\cite{HN73} and Jarik Ne\v{s}et\v{r}il and Patrice Ossona de Mendez for informing us about their work on $\End$-universal classes of bounded expansion~\cite{NOdM17}. KK wishes to thank Ignacio Garc\'ia-Marco and Irene M\'arquez-Corbella for some initial discussions on the subject of this paper. KK was partially supported by the French \emph{Agence nationale de la recherche} through
project ANR-17-CE40-0015 and by the Spanish \emph{Ministerio de Econom\'ia, Industria y Competitividad}
through grant RYC-2017-22701, the Severo Ochoa and Mar\'ia de Maeztu Program for Centers and Units of Excellence in R\&D (CEX2020-001084-M), and grant PID2022-137283NB-C22. GPS was also partially supported by the latter grant, and by project ANR-21-CE48-0012.
\small
\bibliography{biblio.bib}
\bibliographystyle{my-siam}

\end{document}